\DeclareMathAlphabet{\mathpzc}{OT1}{pzc}{m}{it}
\newcommandx{\huom}[2][1=]{\todo[linecolor=red,backgroundcolor=red!10,bordercolor=red,#1]{#2}}
\newcommand{\ve}{\varepsilon}
\newcommand{\N}{\mathbb{N}}
\newcommand{\R}{\mathbb{R}}
\newcommand{\HH}{\mathbb{H}}
\newcommand{\ang}[1]{\left\langle #1 \right\rangle}
\newcommand{\p}{\partial}
\DeclareMathOperator\diam{diam}
\DeclareMathOperator\dist{dist}
\DeclareMathOperator\dv{div}
\newcommand{\pinf}{\partial_{\infty}}
\DeclareMathOperator\Ric{Ric}
\DeclareMathOperator\Hess{Hess}
\DeclareMathOperator\vol{vol}
\DeclareMathOperator\supp{supp}
\DeclareMathOperator\sgn{sgn}
\numberwithin{equation}{section}
\theoremstyle{plain}
\newtheorem{thm}{Theorem}[section]
\newtheorem{lem}[thm]{Lemma}
\newtheorem{cor}[thm]{Corollary}
\newtheorem{prop}[thm]{Proposition}
\theoremstyle{definition}
\newtheorem{exa}[thm]{Example}
\newtheorem{rem}[thm]{Remark}
\begin{document}

\title{Asymptotic Dirichlet problems in warped products}

\author{Jean-Baptiste Casteras}
\address{J.-B. Casteras, Departement de Mathematique
Universite libre de Bruxelles, CP 214, Boulevard du Triomphe, B-1050 Bruxelles, Belgium}
\email{jeanbaptiste.casteras@gmail.com}

\author{Esko Heinonen}
\address{E. Heinonen, Department of Mathematics and Statistics, P.O.B. 68 (Pietari Kalmin katu 5), 00014 University
of Helsinki, Finland.}
\email{esko.heinonen@helsinki.fi}

\author{Ilkka Holopainen}
\address{I.Holopainen, Department of Mathematics and Statistics, P.O.B. 68 (Pietari Kalmin katu 5), 00014 University
of Helsinki, Finland.}
\email{ilkka.holopainen@helsinki.fi}

\author{Jorge Lira}
\address{J. Lira, Departamento de Matemática, Universidade Federal do Ceará, Bloco 914,
Campus do Pici, Fortaleza, Ceará 60455-760, Brazil}
\email{jorge.lira@mat.ufc.br}

\thanks{J.-B.C. supported by MIS F.4508.14 (FNRS)} 
\thanks{E.H. supported by Jenny and Antti Wihuri Foundation and CNPq.}
\thanks{I.H. supported by the Faculty of Science, University of Helsinki, and FUNCAP}
\thanks{J.L. supported by CNPq and FUNCAP}

\subjclass[2000]{Primary 58J32; Secondary 53C21}
\keywords{Mean curvature equation, Killing graph, Dirichlet problem, Hadamard manifold, warped product}

\begin{abstract}
We study the asymptotic Dirichlet problem for Killing graphs with prescribed mean curvature $H$ in warped product manifolds $M\times_\varrho \R$. In the first part of the paper, we prove the existence of Killing graphs with prescribed boundary on geodesic balls under suitable assumptions on $H$ and the mean curvature of the Killing cylinders over geodesic spheres. In the process we obtain a uniform interior gradient estimate improving previous results by Dajczer and de Lira. In the second part we solve the asymptotic Dirichlet problem in a large class of manifolds whose sectional curvatures are allowed to go to $0$ or to $-\infty$ provided that $H$
satisfies certain bounds with respect to the sectional curvatures of $M$ and the norm of the Killing vector field. 
Finally we obtain non-existence results if the prescribed mean curvature function $H$ grows too fast.
\end{abstract}

\maketitle

\tableofcontents

\section{Introduction}

Let $N$ be a Riemannian manifold of the form $N = M \times_\varrho \mathbb{R},$ where $M$ is a complete $n$-dimensional Riemannian  manifold and $\varrho\in C^\infty(M)$ is a smooth (warping) function. This means that the Riemannian metric $\bar g$ in $N$ is of the form
\begin{equation}
\bar g =  (\varrho\circ\pi_1)^2\pi_2^*\,{\rm d} t^2 + \pi^*_1g,
\end{equation}
where $g$ denotes the Riemannian metric in $M$ whereas $t$ is the natural coordinate in $\mathbb{R}$ and $\pi_1: M\times \mathbb{R}\to M$ and $\pi_2:M\times \mathbb{R}\to \mathbb{R}$ are the standard projections. It follows that the coordinate vector field $X = \partial_t$ is a Killing field and that $\varrho = |X|$ on $M$. 
Since the norm of $X$ is preserved along its flow lines, we may extend $\varrho$ to a smooth function $\varrho=|X| \in C^\infty(N)$. From now on, we suppose that $\varrho>0$ on $M$. 

In this paper we study Killing graphs with prescribed mean curvature. Such graphs were introduced by Dajczer and Ripoll in \cite{DR}, where the Dirichlet problem for a graph of constant mean curvature $H$ with $C^{2,\alpha}$ boundary values was solved in a bounded domain $\Omega$ contained in a normal geodesic disk $D\subset M$ of radius $r_0$ under hypothesis involving $r_0$, data on $\Omega$, and the curvature of the ambient 3-dimensional space $N$. A bit later in \cite{DHL} the Dirichlet problem for prescribed mean curvature $H\in C^{\alpha}$ with $C^{2,\alpha}$ boundary values was solved in bounded domains $\Omega\subset M$ with $C^{2,\alpha}$ boundary again under hypothesis involving data on $\Omega$ and the Ricci curvature of the ambient space $N$. Recall that given a domain $\Omega \subset M$, the Killing graph of a $C^2$ function $u:\Omega\to \mathbb{R}$ is the hypersurface given by 
\[
\Sigma_u = \{(x, u(x)): x\in \Omega\}\subset M\times \mathbb{R}.
\]
In other words, 
\[
\Sigma_u=\{\Psi(x,u(x))\colon x\in\Omega\},
\]
where $\Psi\colon \Omega\times \R\to N$ is the flow generated by $X$.
In \cite{DLR} the Dirichlet problem was solved with merely continuous boundary data. Furthermore, the authors proved the existence and uniqueness of so-called radial graphs in the 
hyperbolic space $\HH^{n+1}$ with prescribed mean curvature and asymptotic boundary data at infinity thus solving the asymptotic Dirichlet problem in $\HH^n\times_{\cosh r}\R$. One of our goals in the current paper is to solve the asymptotic Dirichlet problem  with prescribed mean curvature in a large class of negatively curved manifolds.

On the other hand, it is an interesting question under which conditions on a Riemannian manifold $M$ every entire constant mean curvature graph over $M$ is a slice, i.e. a graph of a constant function. The first such result is the celebrated theorem due to Bombieri, De Giorgi, and Miranda \cite{bombieri} that an entire
minimal positive graph over $\R^n$ is a totally geodesic slice. 
Their result was extended by Rosenberg, Schulze, and Spruck \cite{RSS} to a complete Riemannian manifold $M$ with nonnegative Ricci curvature and the sectional curvature bounded from below by a negative constant. Ding, Jost, and Xin considered in \cite{DJX}
complete, noncompact Riemannian manifolds with nonnegative Ricci curvature, Euclidean volume growth, and quadratic decay of the curvature tensor. They proved that an entire minimal graph over such a manifold $M$ must be a slice if its height function has at most linear growth on one side unless $M$ is isometric to Euclidean space.
In the recent paper \cite{CHH3} Casteras, Heinonen, and Holopainen showed that a minimal positive graph over a complete Riemannian manifold with asymptotically nonnegative sectional curvature and only one end is a slice if its height function has at most linear growth.
Entire Killing graphs in $M\times_{\varrho}\R$ with constant mean curvature were studied in \cite{DL} and \cite{Dajczer2016}. In particular, it was shown in \cite{DL} that a bounded entire Killing graph of constant mean curvature must be a slice if $\Ric_M\ge 0, \ K_M\ge -K_0$ for some $K_0\ge 0$, and if $\varrho\ge\varrho_0>0$, with $||\varrho||_{C^2(M)}<\infty$.

Our current paper is inspired by the above mentioned research \cite{DHL}, \cite{DLR}, \cite{DL}, and \cite{Dajczer2016} on Killing graphs with prescribed mean curvature as well as by the recent paper
\cite{CHH2}. In the latter, the asymptotic Dirichlet problem for 
$f$-minimal graphs in Cartan-Hadamard manifolds $M$ has been studied. Recall that $f$-minimal hypersurfaces are natural generalizations of 
self-shrinkers which play a crucial role in the study of mean curvature flow. Moreover, they are minimal hypersurfaces of 
weighted manifolds  $M_f=\bigl(M,g,e^{-f}{\rm d}\vol_{M}\bigr)$, where $(M,g)$ is a complete Riemannian manifold with the 
Riemannian volume element ${\rm d}\vol_{M}$. 

Returning to the Killing graph $\Sigma_u$ of a function $u$, we note that the induced metric in $\Sigma_u$ has components
\begin{equation}
\label{metric}
g_{ij} + \varrho^2(x)u_i u_j,
\end{equation}
where $g_{ij}$ are local components of the metric $g$.  The induced volume element in $\Sigma_u$ (or equivalently, on the domain $\Omega \subset M$) is given by 
\[
 {\rm d}\Sigma = \varrho\sqrt{\varrho^{-2}+|\nabla u|^2}\, {\rm d}M.
\]
We consider the constrained area functional 
\[
\mathcal{A}_H[u] = \int_{\Omega}  \varrho\sqrt{\varrho^{-2}+|\nabla u|^2}\, {\rm d}M + \mathcal{V}_H[u],
\]
where
\[
\mathcal{V}_H[u] = \int_\Omega\int_0^{u(x)\varrho(x)} nH {\rm d}M = \int_\Omega nH \varrho u\, {\rm d}M
\]
and $H$ is a smooth function on $\Omega$.
Given an arbitrary compactly supported function $v\in C^\infty_0(\Omega)$ we have the first variation formula
\[
\delta \mathcal{A}_H[u]\cdot v = \frac{{\rm d}}{{\rm d}s}\Big|_{s=0}\mathcal{A}_H[u+sv]=
-\int_\Omega \Big({\rm div} \Big(\frac{\nabla u}{W}\Big) +\Big\langle \nabla \log\varrho, \frac{\nabla u}{W}\Big\rangle-nH\Big) v\varrho\, {\rm d}M,
\]
where
\[
W = \sqrt{\varrho^{-2}+|\nabla u|^2}
\]
and the differential operators $\nabla$ and ${\rm div}$ are taken with respect to the metric $g$ in $M$. Then the Euler-Lagrange equation of this functional is 
\begin{equation}
\label{PDE}
{\rm div} \Big(\frac{\nabla u}{W}\Big) +\Big\langle \nabla \log\varrho, \frac{\nabla u}{W}\Big\rangle=nH
\end{equation}
and $H(x)$ is the mean curvature of the graph $\Sigma_u\subset
M\times_\varrho\R$ at $(x,u(x))$. The equation \eqref{PDE} can be rewritten as 
\[
{\rm div}_{-\log\varrho} \Big(\frac{\nabla u}{W}\Big) =nH,
\]
where the weighted divergence operator corresponding to a smooth density function $f\in C^\infty(M)$ is defined by
\[
{\rm div}_f Z =  e^f{\rm div} (e^{-f} Z) =  {\rm div} Z -\langle \nabla f, Z\rangle.
\]
Note that this is the divergence-form operator that fits well with the weighted measure $\varrho \, {\rm d}M$ in the sense that a suitable version of the divergence theorem is still valid in this context. Reasoning another way around, since $\Sigma$ is oriented by the normal vector field
\[
N = \frac{1}{W}\big(\varrho^{-2} X - \nabla u |_{(x, u(x))}\big)
\]
and
\[
\Big\langle \nabla \log\varrho, \frac{\nabla u}{W}\Big\rangle = -\langle\bar\nabla \log\varrho, N\rangle,
\]
where $\bar\nabla$ is the Riemannian connection in $N$, we can interpret
\[
H_{\log\varrho} = H + \frac{1}{n}\langle \nabla \log\varrho, N\rangle 
\]
as a weighted mean curvature of the submanifold $\Sigma_u$ in the \emph{Riemannian} product $M\times \mathbb{R}$ in the sense that the Euler-Lagrange PDE may be rewritten as
\[
{\rm div} \Big(\frac{\nabla u}{W}\Big) =nH_{\log\varrho}.
\]
More generally, if $f$ is an arbitrary density in $M$ we consider a weighted area functional of the form
\[
\mathcal{A}_{H,f}[u] = \int_{\Omega}  e^{-f}\varrho\sqrt{\varrho^{-2}+|\nabla u|^2}\, {\rm d}M + \int_\Omega nH e^{-f} \varrho  u\, {\rm d}M.
\]
In this case, the Euler-Lagrange equation is
\begin{equation}
{\rm div}_{f} \Big(\frac{\nabla u}{W}\Big) +\Big\langle \nabla \log\varrho, \frac{\nabla u}{W}\Big\rangle=nH.
\end{equation}
As before, this equation may be rewritten either in terms of a modified weighted divergence
\[
{\rm div}_{f-\log\varrho} \Big(\frac{\nabla u}{W}\Big) =nH
\]
or as a prescribed weighted mean curvature equation
\begin{equation*}
{\rm div}_{f} \Big(\frac{\nabla u}{W}\Big):={\rm div} \Big(\frac{\nabla u}{W}\Big) +\langle \bar\nabla f, N\rangle =nH_{\log\varrho}.
\end{equation*}

For the time being, we restrict ourselves to the case where $f=0$. Intrinsically, given a hypersurface $\Sigma \subset N$ and denoting $u = t|_\Sigma$, the parametric counterpart of (\ref{PDE}) is 
\begin{equation}
\label{PDE-3}
\Delta_\Sigma u = nH\langle N, \partial_t\rangle - 2\langle \nabla^\Sigma\log\varrho, \nabla^\Sigma u\rangle,
\end{equation}
where $\Delta_\Sigma$ is the Laplace-Beltrami operator in $\Sigma$.  Indeed if $\nabla^\Sigma$ denotes the intrinsic covariant derivative in $\Sigma$, we have
\[
\nabla^\Sigma u = (\bar\nabla t)^T = \varrho^{-2}\partial_t^T,
\]
where $T$ denotes tangential projection onto $T\Sigma$. Hence we obtain
\[
\Delta_\Sigma u  = nH\varrho^{-2} \langle \partial_t, N\rangle + \langle \nabla^\Sigma \varrho^{-2}, \partial_t^T\rangle, 
\]
from where the formula (\ref{PDE-3}) above follows. 

In particular, minimal graphs in $N = M\times_\varrho\mathbb{R}$ have height function that satisfies the weighted harmonic equation
\begin{equation}
\label{harmonic}
\Delta_\Sigma u + 2\langle \nabla^\Sigma \log\varrho, \nabla^\Sigma u\rangle = 0.
\end{equation}
This may be considered as a PDE in $\Omega$ if we replace the metric $g$ by the induced metric with components given by (\ref{metric}).

Denoting
\[
\sigma^{ij}=g^{ij} - \frac{u^i u^j}{W^2} 
\]
we can write  (\ref{PDE}) in non-divergence form as
\begin{equation}
\label{non-div-0}
\sigma^{ij}u_{i;j} + (\log\varrho)^i u_i \left(1+\frac{1}{\varrho^2 W^2}\right)= nH W. 
\end{equation}

\section{Main results}
The existence of Killing graphs with prescribed mean curvature $H$ over bounded domains $\Omega\subset M$ with continuous boundary data on $\partial\Omega$ was established in \cite[Theorem 2]{DLR} under suitable conditions on the Ricci curvature on $\Omega$, the mean curvature function $H$, and on the mean curvature of the Killing cylinder over $\partial\Omega$; see also \cite{DHL}. 

In this paper we mainly focus on the setting where $M$ is a Cartan-Hadamard manifold with sectional curvatures controlled from above and below by some radial functions. We prove \emph{quantitative} a priori height and gradient estimates for solutions of \eqref{PDE} on geodesic balls $\Omega=B(o,k)\subset M$ under natural conditions on the prescribed mean curvature function in terms of
sectional curvatures $K_M$ and the warping function $\varrho$.
 These estimates allow us to use the continuity method (the Leray-Schauder method) and hence are enough to guarantee the existence of solutions to the following Dirichlet problem
\begin{equation}
\label{dir-aux}
\begin{cases}
{\rm div} \big(\frac{\nabla u}{W}\big) +\big\langle \nabla \log\varrho, \frac{\nabla u}{W}\big\rangle=nH & \quad \mbox{ in } \quad \Omega\\
u|\partial\Omega = \varphi & \quad \mbox{ in } \quad \partial\Omega,
\end{cases}
\end{equation} 
where $\varphi\in C(\partial\Omega)$.
We formulate the (local) existence result in geodesic balls on Cartan-Hadamard manifolds.
\begin{thm}\label{loc-exist}
Let $M$ be a Cartan-Hadamard manifold, 
$\Omega=B(o,k)\subset M$, and $\varphi\in C(\partial\Omega)$. Suppose that the prescribed mean curvature function 
$H\in C^\alpha(\Omega)$ satisfies
\[
|H(x)|<H_{k-d(x)}
\]
in $\bar{\Omega}$, where $d(x)=\dist\big(x,\partial B(o,k)\big)=k-r(x)$ and $H_{k-d}$ is the mean curvature of the Killing cylinder 
$\mathcal{C}_{k-d}$ over the geodesic sphere $\partial B(o, k-d)$.
Then there exists a unique solution 
$u\in C^{2,\alpha}(\Omega)\cap C(\bar{\Omega})$ to \eqref{dir-aux}.
\end{thm}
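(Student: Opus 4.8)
The plan is to apply the Leray--Schauder continuity method to the quasilinear elliptic Dirichlet problem \eqref{dir-aux}, so the heart of the matter is establishing the a priori estimates that make the method applicable. The equation \eqref{PDE}, written in the non-divergence form \eqref{non-div-0}, is uniformly elliptic on any set where $|\nabla u|$ is bounded (since the eigenvalues of $\sigma^{ij}$ lie between $1/W^2$ and $1$, degenerating only as $|\nabla u|\to\infty$), so the whole scheme reduces to (i) a height estimate $\sup_\Omega|u|\le C$, (ii) a boundary gradient estimate $\sup_{\partial\Omega}|\nabla u|\le C$, and (iii) an interior gradient estimate $\sup_\Omega|\nabla u|\le C$ depending only on the boundary gradient bound and the data. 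These are precisely the quantitative a priori estimates advertised in Section~2 of the paper, which I would take as established in the preceding sections (the interior gradient estimate being the one said to improve Dajczer--de Lira).

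First I would reduce to smooth boundary data: approximate $\varphi\in C(\partial\Omega)$ uniformly by functions $\varphi_j\in C^{2,\alpha}(\partial\Omega)$, solve the problem for each $\varphi_j$, and then pass to the limit using the interior estimates together with a barrier argument at the boundary to get equicontinuity up to $\partial\Omega$; the uniqueness (from the comparison principle for \eqref{PDE}, which holds because the operator is in divergence form with the structure of a prescribed-mean-curvature operator) guarantees the limit is independent of the approximating sequence. So the real work is the case $\varphi\in C^{2,\alpha}(\partial\Omega)$. For that, introduce the family of problems $\mathrm{div}(\nabla u/W)+\langle\nabla\log\varrho,\nabla u/W\rangle = n\,t H$, $u|\partial\Omega = t\varphi$, for $t\in[0,1]$, with $t=0$ giving $u\equiv 0$. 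Define the solution operator on a suitable Hölder space, verify it is compact and continuous, and invoke Leray--Schauder once a $t$-independent $C^{1,\alpha}$ bound is in hand; elliptic Schauder theory then bootstraps to the $C^{2,\alpha}$ bound needed for compactness.

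The decisive ingredient is where the hypothesis $|H(x)| < H_{k-d(x)}$ enters. The Killing cylinder $\mathcal{C}_{r}$ over the geodesic sphere $\partial B(o,r)$ is itself a hypersurface in $M\times_\varrho\mathbb{R}$ whose mean curvature $H_r$ one computes from the second fundamental form of $\partial B(o,r)$ in $M$ (controlled via Hessian comparison, since $M$ is Cartan--Hadamard) and the warping function $\varrho$. The strict inequality $|H| < H_{k-d(x)}$ is exactly the condition under which the functions $w^\pm(x) = \pm\big(\varphi\text{-extension}\big) + \psi(d(x))$, built from a suitable radial profile $\psi$ increasing from $0$, serve as upper and lower barriers: plugging the Killing cylinder into the operator shows $\mathrm{div}(\nabla w^\pm/W)+\langle\nabla\log\varrho,\nabla w^\pm/W\rangle$ is dominated by $\pm nH_{k-d}$, which by hypothesis strictly dominates $\pm nH$, so the comparison principle pins $u$ between $w^-$ and $w^+$. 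This simultaneously yields the height estimate and the boundary gradient estimate (from $|\nabla\psi|$ at $d=0$), and feeding the latter into the interior gradient estimate of the earlier sections closes the loop.

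I expect the main obstacle to be the barrier construction itself: one must choose the radial profile $\psi$ and verify the differential inequality for $w^\pm$ uniformly in $t\in[0,1]$ and uniformly over $\bar\Omega$, which requires carefully quantifying how $H_{k-d(x)}$ degrades as $d(x)\to 0$ (i.e.\ near $\partial B(o,k)$, where the cylinder's mean curvature is governed by the curvature upper bound) against the fixed strict gap between $|H|$ and $H_{k-d}$. Once the barriers are in place the rest is the standard Leray--Schauder/Schauder machinery, and the passage from $C^{2,\alpha}(\partial\Omega)$ to merely continuous $\varphi$ is routine given the quantitative, data-dependent nature of the interior estimates.
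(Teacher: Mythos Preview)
Your proposal is correct and matches the paper's approach: the paper likewise reduces Theorem~\ref{loc-exist} to the continuity (Leray--Schauder) method for $C^{2,\alpha}$ boundary data, fed by the a priori height estimate (Lemma~\ref{height-est-lem}), boundary gradient estimate (Lemma~\ref{bdgrad-lem}), and interior gradient estimate (Lemmas~\ref{int-grad-est}--\ref{glob-int-grad-est}), with the passage to continuous $\varphi$ handled by approximation as in \cite{DLR}. The only organizational difference is that the paper builds \emph{separate} barriers for the height and boundary gradient estimates (a profile $h(d)=C\int_0^d\varrho_0^{-1}$ for the former, and a logarithmic profile $g(d)=\tfrac{C}{\log(1+K)}\log(1+Kd)$ plus an extension of $\varphi$ for the latter), rather than extracting both from a single barrier as you suggest; either route works, and the hypothesis $|H|<H_{k-d}$ enters exactly as you describe.
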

Above and in what follows we denote by $r(x)=d(x,o)$ the distance from $x$ to a fixed point $o\in M$.
We notice that the mean curvature of the Killing cylinder 
$\mathcal{C}_r$ over a geodesic sphere $\partial B(o,r)$ is given by
\[
H_r=\frac{1}{n}\left(\Delta r +\frac{1}{\varrho}\langle\nabla\varrho,\nabla r\rangle\right)
\]
and therefore can be estimated from below in terms of a suitable model manifold $M_{-a^2(r)}\times_{\varrho_+}\R$, where 
$M_{-a^2(r)}$ is a rotationally symmetric Cartan-Hadamard manifold with radial sectional curvatures equal to $-a^2(r)$ and 
$\varrho_+\colon M\to (0,\infty)$ is a positive rotationally symmetric  $C^1$ function such that
\begin{equation}\label{rho-hypo}
\frac{1}{\varrho}\langle\nabla\varrho,\nabla r\rangle=\frac{\partial_r\varrho}{\varrho}\ge\frac{\partial_r\varrho_+}{\varrho_+}.
\end{equation}
To formulate the next corollary and for later purposes we denote by 
$f_\kappa\in C^\infty([0,\infty))$ the solution
of the Jacobi equation
	\begin{equation}\label{jac-equ}
	\begin{cases}
	f_\kappa'' - \kappa^2 f_\kappa = 0 \\
	f_\kappa(0)=0 \\
	f_\kappa'(0) = 1,
	\end{cases}
	\end{equation}
whenever $\kappa\colon [0,\infty)\to[0,\infty)$ is a smooth function. 
\begin{cor}\label{loc-exist-cor}
Let $M$ be a Cartan-Hadamard manifold whose radial sectional curvatures are bounded from above by 
\[
K(P_x)\le -a\big(r(x)\big)^2
\]
for some smooth function $a\colon [0,\infty)\to[0,\infty)$. Suppose, moreover, that \eqref{rho-hypo} holds with some positive rotationally symmetric  $C^1$ function $\varrho_+=\varrho_+(r)$. If the prescribed mean curvature function 
$H\in C^\alpha(\Omega),\ \Omega=B(o,k),$ satisfies
\[
n|H(x)|<\frac{(n-1)f_a^\prime\big(r(x)\big)}{f_a\big(r(x)\big)}+
\frac{\varrho_+^\prime\big(r(x)\big)}{\varrho_+\big(r(x)\big)}
\]
for all $x\in\bar{\Omega}$, then there exists a unique solution 
$u\in C^{2,\alpha}(\Omega)\cap C(\bar{\Omega})$ to \eqref{dir-aux}.
\end{cor}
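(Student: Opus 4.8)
The plan is to deduce Corollary~\ref{loc-exist-cor} directly from Theorem~\ref{loc-exist}, by showing that the pointwise bound on $H$ assumed in the corollary is stronger than the one in the theorem. Since $d(x)=k-r(x)$, we have $k-d(x)=r(x)$, so the Killing cylinder $\mathcal{C}_{k-d(x)}$ occurring in Theorem~\ref{loc-exist} is precisely the cylinder $\mathcal{C}_{r(x)}$ over the geodesic sphere $\partial B\bigl(o,r(x)\bigr)$, whose mean curvature is
\[
H_{r(x)}=\frac1n\Bigl(\Delta r(x)+\frac1\varrho\langle\nabla\varrho,\nabla r\rangle\Bigr).
\]
Hence it suffices to bound $nH_{r(x)}$ from below by the right-hand side of the inequality in the corollary and then invoke Theorem~\ref{loc-exist}.

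First I would estimate $\Delta r$. Since $M$ is Cartan--Hadamard and its radial sectional curvatures satisfy $K(P_x)\le -a\bigl(r(x)\bigr)^2\le 0$, the Hessian comparison theorem (with a variable curvature bound) gives, on $M\setminus\{o\}$,
\[
\Hess r\ge \frac{f_a'(r)}{f_a(r)}\bigl(g-{\rm d}r\otimes{\rm d}r\bigr),
\]
where $f_a$ is the solution of the Jacobi equation \eqref{jac-equ} with $\kappa=a$ (note $f_a>0$ and $f_a'>0$ for $r>0$ because $a\ge0$). Taking traces yields $\Delta r(x)\ge (n-1)f_a'\bigl(r(x)\bigr)/f_a\bigl(r(x)\bigr)$. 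For the warping term, hypothesis \eqref{rho-hypo} gives directly
\[
\frac1\varrho\langle\nabla\varrho,\nabla r\rangle=\frac{\partial_r\varrho}{\varrho}\ge\frac{\partial_r\varrho_+}{\varrho_+}=\frac{\varrho_+'\bigl(r(x)\bigr)}{\varrho_+\bigl(r(x)\bigr)}.
\]
Adding the two estimates,
\[
nH_{r(x)}\ge (n-1)\frac{f_a'\bigl(r(x)\bigr)}{f_a\bigl(r(x)\bigr)}+\frac{\varrho_+'\bigl(r(x)\bigr)}{\varrho_+\bigl(r(x)\bigr)},
\]
so the assumption $n|H(x)|<(n-1)f_a'(r(x))/f_a(r(x))+\varrho_+'(r(x))/\varrho_+(r(x))$ forces $|H(x)|<H_{r(x)}=H_{k-d(x)}$ throughout $\bar\Omega$. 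Theorem~\ref{loc-exist} then produces the unique solution $u\in C^{2,\alpha}(\Omega)\cap C(\bar\Omega)$ to \eqref{dir-aux}.

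The only point needing a word of care is the behaviour at the centre $o$, where $r$ fails to be smooth: there $f_a'(r)/f_a(r)\to\infty$ as $r\to 0$, so the hypothesis on $H$ is automatically satisfied in a punctured neighbourhood of $o$ (recall $H$ is bounded on $\bar\Omega$), and the cylinder mean curvature is in any case only needed on spheres $\partial B(o,k-d(x))$ with $x\ne o$; so no genuine obstruction arises. There is no real ``hard part'' here: all the substantive work — the a priori height and gradient estimates and the continuity (Leray--Schauder) method — is already packaged in Theorem~\ref{loc-exist}, and what remains is the classical Hessian comparison argument together with the elementary algebra above.
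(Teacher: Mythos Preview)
Your proposal is correct and follows exactly the approach the paper intends: the discussion preceding the corollary already observes that $H_r=\tfrac1n\bigl(\Delta r+\varrho^{-1}\langle\nabla\varrho,\nabla r\rangle\bigr)$ can be bounded below via the comparison manifold $M_{-a^2(r)}\times_{\varrho_+}\R$, which is precisely the Hessian/Laplacian comparison plus \eqref{rho-hypo} argument you give, and then Theorem~\ref{loc-exist} applies. The paper itself does not write out these details, only remarking that the corollary follows from the a~priori estimates and the continuity method, so your write-up in fact fills in what the paper leaves implicit.
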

As mentioned above the proofs of Theorem~\ref{loc-exist} and Corollary~\ref{loc-exist-cor} for boundary data $\varphi\in C^{2,\alpha}(\partial\Omega)$ follow from the well-known continuity method once the a priori height and gradient estimates are at our disposal. The case of a continuous boundary values $\varphi\in C(\partial\Omega)$ can be treated as in \cite{DLR}; see also \cite{CHH2}.

Our main object in this paper is the asymptotic Dirichlet problem for Killing graphs with prescribed mean curvature and behaviour at infinity. To solve the problem, we extend the given  boundary value function
$\varphi\in C(\partial_\infty M)$ to a continuous function 
$\varphi\in C(\bar{M})$; see Section~\ref{sec_bar_infty} for the notation. Then we apply Corollary~\ref{loc-exist-cor} 
for an exhaustion $\Omega_k=B(o,k),\ k\in\N,$ of $M$ 
to obtain a sequence of 
solutions $u_k$ with boundary values $u_k|\partial\Omega_k=\varphi$. Under a suitable bound on $|H|$ in terms of a comparison manifold $M_{-a^2(r)}\times_{\varrho_+}\R$ we obtain a global height estimate and, consequently together with Schauder estimates, the sequence is uniformly bounded in the
$C^{2,\alpha}$-norm. Hence there exists a subsequence that converges in the $C^{2,\alpha}$-norm to a global solution $u$ to the equation
\[
{\rm div} \big(\frac{\nabla u}{W}\big) +\big\langle \nabla \log\varrho, \frac{\nabla u}{W}\big\rangle=nH
\]
in $M$. Finally, under suitable curvature upper and lower bounds as well as conditions on $|H|$ we are able to construct (local) barriers at infinity and prove that the solution $u$ extends continuously to 
$\partial_\infty M$ and attains the given boundary values $\varphi$ there.

The following two solvability theorems will be proven in 
Section~\ref{ADP_sec}.
\begin{thm}
\label{main1}
Let $M$ be a Cartan-Hadamard manifold satisfying the curvature
assumptions \eqref{curv-bound-gen} and \eqref{A1}--\eqref{A7} in Section \ref{sec_bar_infty}. Furthermore, assume that the prescribed mean curvature function 
$H\colon M \to \R$ satisfies the assumptions \eqref{height_mean_assum2} and 
\eqref{asym_meancurv_assum} with a convex warping function $\varrho$ satisfying \eqref{rho_height_assum}, \eqref{rho_height_assum1}, \eqref{rho_sign_assum}, and \eqref{rho_rad_assum}.
Then there exists a unique solution $u\colon M\to \R$
to the Dirichlet problem
	\begin{equation}\label{ADP1}
		\begin{cases}
	\dv_{-\log \varrho} \dfrac{\nabla u}{\sqrt{\varrho^{-2} + |\nabla u|^2}} = nH(x) \quad \text{in } M \\
	u|\pinf M = \varphi
	\end{cases}
		\end{equation}
for any continuous function $\varphi\colon \pinf M\to \R$.
\end{thm}

\begin{thm}
\label{main2}
Let $M$ be a Cartan-Hadamard manifold satisfying the curvature
assumptions \eqref{curv-bound-gen} and \eqref{A1}--\eqref{A7} in Section \ref{sec_bar_infty}. Furthermore, assume that the prescribed mean curvature function 
$H\colon M \to \R$ satisfies the assumptions \eqref{HVbound} and 
\eqref{asym_meancurv_assum} with a convex warping function $\varrho$ satisfying \eqref{rho+assumption2}, \eqref{rho_sign_assum}, and \eqref{rho_rad_assum}.
Then there exists a unique solution $u\colon M\to \R$
to the Dirichlet problem \eqref{ADP1}
for any continuous function $\varphi\colon \pinf M\to \R$.
\end{thm}

\begin{rem}
The following example illustrates the need of our assumptions about the warping function $\varrho$ in Theorems \ref{main1}  and \ref{main2}.  Let $N$ be the $(n+1)$-dimensional hyperbolic space $\mathbb{H}^{n+1}$ and consider the Killing vector field $X$ in $\mathbb{H}^{n+1}$ corresponding to a one-parameter family of parabolic isometries of $\mathbb{H}^{n+1}$ preserving a given  ideal point, say $p_0\in \partial_\infty\mathbb{H}^{n+1}$.  
This configuration cannot be directly compared with a rotationally invariant model (that is, invariant by a one-parameter family of \emph{elliptic} isometries) as we have assumed for instance in conditions \ref{rho_height_assum} and \ref{rho_height_assum1}.  This borderline case of a one-parameter family of parabolic isometries and the corresponding Killing field in $\mathbb{H}^{n+1}$ were studied by  Ripoll and Telichevsky in \cite{RT_BBMS} using different techniques relying on a variant of the Perron method. 
\end{rem}

\section{A priori height and gradient estimates}\label{sec-loc-estimates}
Throughout this section we denote by $\Omega_k = B(o,k)$ the geodesic ball centered at a given point $o\in M$ with radius $k\in\N$, and by $d(\cdot) = {\rm dist}(\cdot,\p\Omega_k)$ the distance function to the boundary of $\Omega_k$.


\subsection{Height estimate}

Fix $k\in\N$ and suppose that $u_k\in C^2(\Omega_k)$ is a solution of the Dirichlet problem \eqref{dir-aux}.
We aim to show that the function 
\begin{equation}
\label{h-bar}
v_k (x) = \sup_{\p\Omega_k} \varphi_k + h (d(x)),
\end{equation}
where $h$ will be determined later, is an upper barrier for the solution $u_k$. It suffices to show (see \cite[p. 795]{spruck} or \cite[pp. 239-240]{DHL}) that $v_k$ is a barrier in an open neighbourhood of $\p\Omega_k$ in which the points can be joined to $\p\Omega_k$ by unique geodesics. In this neighbourhood the distance function $d$ has the same regularity as $\p\Omega_k$ and therefore the derivatives of $d$ in the following computations are well-defined.

Since $X$ is Killing field, we have
\begin{equation}\label{kappa-def}
\langle \nabla \log \varrho, \nabla d\rangle = \frac{1}{\varrho} \ang{\nabla \varrho,\nabla d}= -\frac{1}{\varrho^2} \big\langle\bar\nabla_X X, \nabla d\rangle =: -\kappa(d),
\end{equation}
where $\kappa$ is the principal curvature of the Killing cylinder $\mathcal{C}_{k-d}$ over the geodesic sphere $\partial B(o, k-d)$. 
This implies that
\begin{align*}
\mathcal{Q}[v_k] &= {\rm div} \bigg(\frac{h'\nabla d}{\sqrt{\varrho^{-2}+ h'^2}}\bigg) -\kappa \frac{h'}{\sqrt{\varrho^{-2}+ h'^2}}\\
&= \frac{h'}{\sqrt{\varrho^{-2}+ h'^2}} \big(\Delta d - \kappa) + \p_d\bigg(\frac{h'}{\sqrt{\varrho^{-2}+ h'^2}}\bigg),
\end{align*}
where $\p_d$ denotes the derivative to the direction $\nabla d$.
However,
\[
\Delta d- \kappa = -nH_{k-d},
\]
where $H_{k-d}$ is the mean curvature of the cylinder $\mathcal{C}_{k-d}$, and we have
\begin{align*}
\p_d\bigg(\frac{h'}{\sqrt{\varrho^{-2}+ h'^2}}\bigg) &= \frac{h''}{\sqrt{\varrho^{-2}+ h'^2}} - \frac{h'}{(\varrho^{-2}+h'^2)^{3/2}} (\varrho^{-2}\kappa +  h'h'')  \\ 
&= \frac{\varrho^{-2}}{(\varrho^{-2}+h'^2)^{3/2}} (h''-\kappa h').
\end{align*}
Hence it follows that
\begin{align*}
\mathcal{Q}[v_k]  = -\frac{h'}{\sqrt{\varrho^{-2}+ h'^2}} nH_{k-d}  + \frac{\varrho^{-2}}{(\varrho^{-2}+h'^2)^{3/2}} (h''-\kappa h').
\end{align*}

Suppose that the principal curvature of the Killing cylinder $\mathcal{C}_{k-d}$ satisfies 
\[
\kappa(d) \ge -\frac{\varrho_0'(d)}{\varrho_0(d)},
\]
where $\varrho_0$ is a smooth positive increasing function on $[0,\infty)$. We note already at this point that, in the case of Cartan-Hadamard manifolds, $\nabla d = -\nabla r$ and this agrees with the assumption \eqref{rho_height_assum}. Then define the function $h$ as
\begin{equation}
h(d) = C\int_0^d\varrho_0^{-1}(t){{\rm d}t}
\end{equation}
for some constant $C>0$ to be fixed later. Now, since $h'>0$, we have
\[
h''-\kappa h' \le 0
\]
and
\begin{eqnarray*}
\mathcal{Q}[v_k]  \le -\frac{h'}{\sqrt{\varrho^{-2}+ h'^2}} nH_{k-d}  = 
-\frac{C\varrho}{\sqrt{\varrho^2_0 +C^2\varrho^2}} nH_{k-d}.
\end{eqnarray*}
Assuming that
\[
|H| < H_{k-d}
\]
in $\bar\Omega_k$ 
and choosing the constant $C$ as
\[
C^2 > \frac{H^2/H_{k-d}^2}{1-H^2/H_{k-d}^2} \frac{\sup \varrho_0^2}{\inf \varrho^2}
\]
we see that
\[
\mathcal{Q}[v_k] - nH \le 0
\]
and hence $v_k$ is an upper barrier for $u_k$.

Similarly we see that the function
\[
v_k^- = \inf_{\p\Omega_k} \varphi_k - h(d)
\]
is a lower barrier for $u_k$ and together these barriers give the following height estimate.

\begin{lem}\label{height-est-lem}
Assume that
\begin{equation}\label{|H|bound}
|H| < H_{k-d}
\end{equation}
in $\bar\Omega_k$ and that $u_k$ is a solution to the Dirichlet problem \eqref{dir-aux}. Then there exists a constant $C=C(\Omega_k)$ such that 
	\[
	\sup_{\Omega_k} |u_k| \le C + \sup_{\p\Omega_k} |\varphi|.
	\]
\end{lem}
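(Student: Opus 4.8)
\emph{Proof proposal.} The plan is to read off the height estimate from the upper and lower barriers constructed above. Take the upper barrier $v_k = \sup_{\partial\Omega_k}\varphi + h(d)$ with $h(t) = C\int_0^t\varrho_0^{-1}(s)\,{\rm d}s$, where the constant $C>0$ is chosen so large that
\[
C^2 > \frac{\sup_{\bar\Omega_k}\bigl(H^2/H_{k-d}^2\bigr)}{1-\sup_{\bar\Omega_k}\bigl(H^2/H_{k-d}^2\bigr)}\cdot\frac{\sup\varrho_0^2}{\inf\varrho^2}.
\]
This choice is possible because $\bar\Omega_k$ is compact and $|H|<H_{k-d}$ by \eqref{|H|bound}, so the displayed right-hand side is a finite number. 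The computation preceding the lemma then shows $\mathcal{Q}[v_k] - nH \le 0$ wherever $d$ is smooth, while $h(0)=0$ and $h\ge 0$ give $v_k \ge \varphi = u_k$ on $\partial\Omega_k$. Analogously, $v_k^- = \inf_{\partial\Omega_k}\varphi - h(d)$ satisfies $\mathcal{Q}[v_k^-] - nH \ge 0$ off the singular set of $d$ and $v_k^- \le u_k$ on $\partial\Omega_k$.

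Next I would invoke the comparison principle for the prescribed-mean-curvature-type (hence locally uniformly elliptic) operator $\mathcal{Q}$ to propagate $v_k^- \le u_k \le v_k$ from $\partial\Omega_k$ into $\Omega_k$. For the geodesic ball $\Omega_k = B(o,k)$ the distance-to-boundary function $d(\cdot) = k - r(\cdot)$ is smooth on $\Omega_k\setminus\{o\}$, and since $h$ is increasing, $v_k$ has a downward corner (a local maximum) at the single singular point $o$; hence a positive interior maximum of $u_k - v_k$ can occur neither at $o$ nor, being $\le 0$ there, on $\partial\Omega_k$, and at a smooth interior maximum the strong maximum principle contradicts $\mathcal{Q}[v_k]\le nH = \mathcal{Q}[u_k]$. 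This is exactly the standard reduction to a boundary barrier used in \cite[p.~795]{spruck} and \cite[pp.~239--240]{DHL}. Running the symmetric argument with $v_k^-$ (whose corner at $o$ points upward, so $u_k - v_k^-$ cannot have a negative interior minimum there) yields $u_k\ge v_k^-$ in $\Omega_k$.

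Combining the two inequalities and using $0\le d(x) = k - r(x)\le k$ together with the monotonicity of $h$,
\[
|u_k(x)| \le \sup_{\partial\Omega_k}|\varphi| + h\bigl(d(x)\bigr) \le \sup_{\partial\Omega_k}|\varphi| + C\int_0^k\varrho_0^{-1}(s)\,{\rm d}s
\]
for all $x\in\Omega_k$, which is the assertion with $C(\Omega_k) := C\int_0^k\varrho_0^{-1}(s)\,{\rm d}s<\infty$, a constant depending only on $k$, on $\varrho_0$, and on $\inf_{\bar\Omega_k}\varrho$ and $\sup_{\bar\Omega_k}(|H|/H_{k-d})$. I expect the only genuinely delicate point to be the regularity of $d$ and the associated reduction to a boundary barrier --- i.e.\ making sure the comparison step is not obstructed at the cut locus of $\partial\Omega_k$ --- which, for a geodesic ball in a Cartan--Hadamard manifold, is handled as indicated above since that cut locus is the single point $o$; everything else is a routine application of the maximum principle.
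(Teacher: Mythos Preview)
Your proposal is correct and follows essentially the same approach as the paper: both construct the same barrier $v_k=\sup_{\partial\Omega_k}\varphi+h(d)$ with $h(d)=C\int_0^d\varrho_0^{-1}$, verify $\mathcal{Q}[v_k]-nH\le 0$ where $d$ is smooth, and conclude by comparison. The only difference is that the paper defers the handling of the cut locus of $d$ to the cited references \cite{spruck,DHL}, whereas you exploit the specific setting $\Omega_k=B(o,k)$ in a Cartan--Hadamard manifold (so the cut locus is the single point $o$) and dispose of it directly via the corner argument---which is correct and makes the proof self-contained for this case.
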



\subsection{Boundary gradient estimate}

For given $\ve>0$ we define an annulus
	\[
	U_k(\ve) = \{x\in\Omega_k \colon d(x)<\ve\}.
	\]
In order to obtain a boundary gradient estimate, we aim to show that a function of the form
\[
w(x) = g(d(x)) + \psi(x)
\]
is an upper barrier in the set $U_k(\ve)$ for a fixed $\ve\in(0,1/2)$
 chosen so that $d$ is smooth in $U_k(\ve)$. Here we denote by $\psi$ the extension of the boundary data that is constant along geodesics issuing perpendicularly from $\partial\Omega_k$, i.e. $\psi(\exp_y t\nabla d(y)) = \varphi(y)$, where 
 $y\in\p\Omega_k$ and $\nabla d(y)$ is the unit inward normal to $\p\Omega_k$ at $y$. From \eqref{non-div-0} we have that
\begin{equation}
\label{non-div}
\mathcal{Q}[w] = \frac{1}{W}\Delta w-\frac{1}{W^3}\langle\nabla_{\nabla w}\nabla w, \nabla w\rangle -\frac{1}{\varrho^2} \bigg(1+\frac{1}{\varrho^2 W^2}\bigg)\Big\langle \bar\nabla_X X, \frac{\nabla w}{W}\Big\rangle,
\end{equation}
where 
\[
W = \sqrt{\varrho^{-2}+ |\nabla w|^2} = \sqrt{\varrho^{-2}+ g'^2 + |\nabla\psi|^2}.
\]
Since
\[
\nabla w = g'\nabla d +\nabla\psi, 
\]
with $\langle \nabla d, \nabla \psi\rangle=0$, it follows that
\[
\Delta w = g' \Delta d + g'' + \Delta\psi
\]
and 
\[
\langle\nabla_{\nabla w}\nabla w, \nabla w\rangle = 
g'^2 g''  - g'\langle \nabla_{\nabla \psi} \nabla d, \nabla\psi\rangle
+ \langle\nabla_{\nabla\psi}\nabla\psi, \nabla\psi\rangle.
\]
Moreover, by \eqref{kappa-def}
\[
\langle\nabla w, \bar\nabla_X X\rangle=g'\langle\nabla d, \bar\nabla_X X\rangle + \langle \nabla \psi, \bar\nabla_X X\rangle=g' \varrho^2\kappa + \langle \nabla \psi, \bar\nabla_X X\rangle.
\]
Using the expression \eqref{non-div} we obtain that 
\begin{align*}
\mathcal{Q}[w]   &=  \frac{1}{W}(g''+ g'\Delta d +\Delta\psi)-\frac{1}{W^3} (g'^2 g'' -g'\langle \nabla_{\nabla \psi} \nabla d, \nabla\psi\rangle+\langle \nabla_{\nabla \psi}\nabla \psi, \nabla\psi\rangle)\\
&\quad - \frac{1}{W}\bigg(1+\frac{1}{\varrho^2 W^2}\bigg)\bigg( g'\kappa + \Big\langle \bar\nabla_{\frac{X}{|X|}} \frac{X}{|X|}, \nabla\psi\Big\rangle\bigg)
\end{align*}
and combining with the previous reasoning, this results to
\begin{align*}
\mathcal{Q}[w]   &=  \frac{g''}{W^3}(\varrho^{-2}+|\nabla \psi|^2) + \frac{g'}{W}\bigg(\Delta d- \bigg(1+\frac{1}{\varrho^2 W^2}\bigg)\kappa \bigg) + \frac{1}{W}\Delta\psi \\ &\quad-\frac{1}{W^3} \langle \nabla_{\nabla \psi}\nabla \psi, \nabla\psi\rangle + \frac{g'}{W^3}\langle \nabla_{\nabla \psi} \nabla d, \nabla\psi\rangle \\
&\quad - \frac{1}{W}\bigg(1+\frac{1}{\varrho^2 W^2}\bigg) \Big\langle \bar\nabla_{\frac{X}{|X|}} \frac{X}{|X|}, \nabla\psi\Big\rangle.
\end{align*}
We note that
\begin{align*}
\frac{1}{W}\Delta\psi -\frac{1}{W^3} \langle \nabla_{\nabla \psi}\nabla \psi, \nabla\psi\rangle =
\frac{1}{W} \left(g^{ij} - \frac{\psi^i\psi^j}{W^2}\right)\psi_{i;j}
\end{align*}
and, on the other hand,
\begin{align*}
\frac{1}{W} \left(g^{ij} - \frac{(g'd^i + \psi^i)(g'd^j + \psi^j)}{W^2}\right) &\psi_{i;j} =
\frac{1}{W} \left(g^{ij} - \frac{\psi^i\psi^j}{W^2}\right)\psi_{i;j} -\frac{2g'd^i\psi^j}{W^3}\psi_{i;j} \\ 
&= \frac{1}{W} \left(g^{ij} - \frac{\psi^i\psi^j}{W^2}\right)\psi_{i;j} - \frac{2g'}{W^3}\ang{\nabla_{\nabla d}\nabla\psi, \nabla\psi}.
\end{align*}
Moreover, the matrix $(\sigma^{ij})$ has eigenvalues $1/(\varrho^2W^3)$ and $1/W$ which can be estimated as
\begin{equation}\label{eigenv-est}
\max\left(\frac{1}{\varrho^2W^3}, \frac{1}{W} \right) \le \frac{1}{\varrho^2W^3} + \frac{1}{W} \le \frac{1}{W}(1+\varrho^2).
\end{equation}
When $\varrho\ge1$, this is trivial, and when $\varrho<1$ we can choose the constant $K$ in the definition \eqref{defig} of $g$ such that this holds. Therefore we are able to estimate
\begin{align*}
\mathcal{Q}[w]&\le   \frac{g''}{W^3}(\varrho^{-2}+|\nabla \psi|^2) + \frac{g'}{W}\bigg(\Delta d- \bigg(1+\frac{1}{\varrho^2 W^2}\bigg)\kappa \bigg) + \frac{1}{W}(1+\varrho^2) ||\nabla^2 \psi||\\
&\quad + \frac{3g'}{W^3}\langle \nabla_{\nabla \psi} \nabla d, \nabla\psi\rangle+ \frac{1}{W}\bigg(1+\frac{1}{\varrho^2 W^2}\bigg) \langle \nabla\log\varrho, \nabla\psi\rangle \\
&\le   \frac{g''}{W^3}(\varrho^{-2}+|\nabla \psi|^2) - \frac{g'}{W}\bigg(nH_{k-d}+\frac{1}{\varrho^2 W^2}\kappa \bigg) 
+ \frac{1}{W}(1+\varrho^2) ||\nabla^2 \psi||\\
&\quad + \frac{3g'}{W^3} | II_{k-d}(\nabla \psi, \nabla\psi)|+ \frac{1}{W}\bigg(1+\frac{1}{\varrho^2 W^2}\bigg) \langle \nabla\log\varrho, \nabla\psi\rangle.
\end{align*}

Now we choose
\begin{equation}\label{defig}
g(d) = \frac{C}{\log(1+K)} \log (1+Kd),
\end{equation}
where
\[
C = K(1+K\ve)\log(1+K)
\]
and $K\ge (1-2\ve)\ve^{-2}$ so large that 
\begin{equation}\label{K-u-dep}
C\ge 2\big(\max_{\bar\Omega_k} |u_k| + \max_{\bar\Omega_k} |\psi|\big).
\end{equation}
Note that this choice yields $g \ge u_k$ on the ``inner'' boundary $\{x\in\Omega_k\colon d(x)=\ve\}$ of $U_k(\ve)$.
Then, for $K$ large, we have
\[
1\ge \frac{g'^2}{W^2} \ge \frac{K^4(1+K\ve)^2}{(1+Kd)^2 L + K^4 (1+K\ve)^2} = 
\frac{(1+K\ve)^2}{\frac{(1+Kd)^2}{K^4} L + (1+K\ve)^2}\ge c_1^2>0,
\]
where 
\[
c_1^2 \le \frac{(1+K\ve)^2}{(1+K\ve)^2+L}
\]
with 
\[
L = \sup_{\Omega} (\varrho^{-2} + |\nabla\psi|^2).
\]
We also have
\[
g''(d) = -\frac{g'^2}{K(1+K\ve)},
\]
which implies that
\[
\frac{g''}{W^2} = -\frac{1}{K(1+K\ve)}\frac{g'^2}{W^2} \le - \frac{1}{K(1+K\ve)}c_1^2.
\]
Hence we obtain
\begin{align*}
&\mathcal{Q}[w]   \le  - \frac{1}{K(1+K\ve)}c_1^2 \frac{1}{W}(\varrho^{-2}+|\nabla \psi|^2) - c_1\bigg(nH_{k-d}+\frac{1}{\varrho^2 W^2}\kappa \bigg)\\
&\quad + \frac{1}{W}(1+\varrho^2) ||\nabla^2 \psi|| + \frac{g'}{W} \frac{|\nabla\psi|^2}{W^2} || II_{k-d} ||+ \frac{|\nabla\psi|}{W} |\nabla\log\varrho|\bigg(1+\frac{1}{\varrho^2 W^2}\bigg).
\end{align*}
However,
\[
\frac{1}{\varrho^2 W^2} \ge \frac{c_1^2}{\varrho^2 g'^2}
\ge \bigg(\frac{1+Kd}{K^2(1+K\ve)}\bigg)^2 \frac{c_1^2}{\varrho^2}\ge \frac{1}{K^4(1+K\ve)^2} \frac{c_1^2}{\varrho^2}
\]
and
\[
\frac{1}{W} \ge \frac{1+Kd}{K^2(1+K\ve)}c_1 \ge \frac{1}{K^2(1+K\ve)} c_1.
\]
Combining these with the fact that $W\ge K^2$, we obtain the estimate
\begin{align*}
\mathcal{Q}[w]   &\le  - c_1\frac{K+\kappa}{K^4(1+K\ve)^2}\frac{c_1^2}{\varrho^2} - c_1 nH_{k-d}+ \frac{1}{W}(1+\varrho^2) ||\nabla^2 \psi|| \\
&\quad + \frac{g'}{W} \frac{|\nabla\psi|^2}{W^2} || II_{k-d} ||+\bigg(1+\frac{1}{\varrho^2} \frac{1}{K^2}\bigg)\frac{|\nabla\psi|}{W} |\nabla\log\varrho| \\
&\le  - c_1\frac{K+\kappa}{K^4(1+K\ve)^2}\frac{c_1^2}{\varrho^2} - c_1 nH_{k-d}+ \frac{1}{K^2}(1+\varrho^2) ||\nabla^2 \psi|| \\
&\quad +  |\nabla\psi|^2 || II_{k-d} ||\frac{1}{K^4}+ \bigg(1+\frac{1}{\varrho^2K^4} \bigg) |\nabla\psi||\nabla\log\varrho|\frac{1}{K^2}.
\end{align*}
Therefore
\begin{align}\label{bdgrad-last-est}
\mathcal{Q}[w]   &-nH \le  -n(c_1H_{k-d}+H)  - c_1\frac{K+\kappa}{K^4(1+K\ve)^2}\frac{c_1^2}{\varrho^2} + \frac{1}{K^2}(1+\varrho^2) ||\nabla^2 \psi|| \nonumber \\
&+  |\nabla\psi|^2 || II_{k-d} ||\frac{1}{K^4}+ \bigg(1+\frac{1}{\varrho^2K^4} \bigg) |\nabla\psi||\nabla\log\varrho|\frac{1}{K^2}.
\end{align}
Finally observe that
\[
c_1 H_{k-d} \ge |H|
\]
if we choose $K$ such that
\[
\frac{H^2}{H_{k-d}^2} \le c_1^2 \le \frac{(1+K\ve)^2}{(1+K\ve)^2 +L},
\]
that is
\begin{equation}\label{K-curv-cond}
(1+K\ve)^2 \ge L\frac{H^2/H_{k-d}^2}{1-H^2/H_{k-d}^2}.
\end{equation}
Taking \eqref{eigenv-est}, \eqref{K-u-dep}, \eqref{bdgrad-last-est} and \eqref{K-curv-cond} into account, we can choose 
\[
K=K(\Omega_k,H,||\psi||_{C^2},\ve,\sup_{\Omega_k}|u|)
\]
so large that
\[
\mathcal{Q}[w]-nH \le 0
\]
holds in $U_k(\ve)$. This suffices for the following boundary gradient estimate.
\begin{lem}\label{bdgrad-lem}
Assume that
\[
|H| < H_{k-d}
\]
in $\bar\Omega_k$ and that $u_k$ is a solution to the Dirichlet problem \eqref{dir-aux}. Then there exists a constant $C=C(\Omega_k,H,||\psi||_{C^2},\ve,\sup_{\Omega_k}|u|)$ such that 
\[
\max_{\p\Omega_k} |\nabla u_k| \le C.
\]
\end{lem}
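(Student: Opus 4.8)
The plan is to read the lemma off the barrier already built above by a single application of the comparison principle, with no new computation. Fix $\ve\in(0,1/2)$ so that $d$ is smooth on $U_k(\ve)$ and take $w = g(d)+\psi$ with $g$ as in \eqref{defig}, choosing $K=K(\Omega_k,H,||\psi||_{C^2},\ve,\sup_{\Omega_k}|u_k|)$ so large that the estimate \eqref{bdgrad-last-est} together with \eqref{K-curv-cond} (which is where the hypothesis $|H|<H_{k-d}$ enters) gives $\mathcal{Q}[w]-nH\le 0$ throughout $U_k(\ve)$, and so that \eqref{K-u-dep} holds. First I would check the boundary ordering on $\partial U_k(\ve)$: on $\partial\Omega_k=\{d=0\}$ one has $g(0)=0$ and $\psi=\varphi$, so $w=u_k$ there, while on the inner boundary $\{d=\ve\}$ the choice \eqref{K-u-dep} of $C$ forces $g(\ve)\ge\sup_{\Omega_k}|u_k|+\max_{\bar\Omega_k}|\psi|$, hence $w=g(\ve)+\psi\ge u_k$.

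Next I would invoke the comparison principle. Since $\mathcal{Q}$ is elliptic and, regarded as an operator on $U_k(\ve)$, depends only on $x$, $\nabla u$ and $\nabla^2 u$ (not on the value of $u$), and since $nH=nH(x)$, the difference $v=u_k-w$ satisfies $Lv\ge 0$ for a linear elliptic operator $L$ with vanishing zeroth-order coefficient, obtained in the usual way from $\mathcal{Q}[u_k]-\mathcal{Q}[w]\ge 0$ via the mean-value theorem. The weak maximum principle then gives $\sup_{U_k(\ve)}v=\sup_{\partial U_k(\ve)}v\le 0$, i.e. $u_k\le w$ in $U_k(\ve)$. The computation preceding the lemma is symmetric under $g\mapsto -g$ up to sign changes, so $\check w=-g(d)+\psi$ satisfies $\mathcal{Q}[\check w]-nH\ge 0$ with $\check w\le u_k$ on $\partial U_k(\ve)$, whence $u_k\ge\check w$ in $U_k(\ve)$. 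Altogether $|u_k(x)-\psi(x)|\le g(d(x))$ on $U_k(\ve)$.

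Finally I would extract the gradient bound. For $y\in\partial\Omega_k$ let $x=\exp_y\bigl(t\nabla d(y)\bigr)$ run along the inward unit normal geodesic; along it $\psi$ is constant equal to $\varphi(y)=u_k(y)$, so $|u_k(x)-u_k(y)|\le g(d(x))\le g'(0)\,d(x)$ because $g$ is concave with $g(0)=0$, and letting $x\to y$ yields $|\partial_d u_k(y)|\le g'(0)$. The component of $\nabla u_k(y)$ tangential to $\partial\Omega_k$ equals that of $\nabla\varphi$, hence is bounded by $||\psi||_{C^2}$; combining the normal and tangential parts gives $\max_{\partial\Omega_k}|\nabla u_k|\le C$ with $C=C(\Omega_k,H,||\psi||_{C^2},\ve,\sup_{\Omega_k}|u_k|)$, and $\sup_{\Omega_k}|u_k|$ is an honest a priori quantity by Lemma~\ref{height-est-lem}.

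The main obstacle is not in this last packaging but in the single choice of $K$ that must simultaneously do three jobs: force $\mathcal{Q}[w]\le nH$ on $U_k(\ve)$ (absorbing the geometry of $\Omega_k$ through $\Delta d$, $\kappa$, $II_{k-d}$, $|\nabla\log\varrho|$ and the gap $H_{k-d}-|H|$), dominate $u_k$ on the inner boundary (forcing the dependence on $\sup_{\Omega_k}|u_k|$), and keep $d$ smooth on $U_k(\ve)$ — exactly the balancing carried out in \eqref{defig}--\eqref{K-curv-cond}. Once that is reconciled, the comparison step and the passage to the boundary gradient estimate are routine.
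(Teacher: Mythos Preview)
Your proposal is correct and follows essentially the same approach as the paper: the entire analytic content lies in the construction of the barrier $w=g(d)+\psi$ and the choice of $K$ satisfying \eqref{eigenv-est}, \eqref{K-u-dep}, \eqref{bdgrad-last-est}, \eqref{K-curv-cond}, after which the paper simply asserts ``This suffices for the following boundary gradient estimate'' without spelling out the comparison-principle step. You have supplied exactly that standard packaging --- the boundary ordering on $\partial U_k(\ve)$, the maximum principle for the difference (legitimate since $\mathcal{Q}$ has no zeroth-order term), the symmetric lower barrier $-g(d)+\psi$, and the extraction of $|\partial_d u_k|\le g'(0)$ via concavity of $g$ --- so there is nothing to correct.
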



\subsection{Interior gradient estimate}
In this subsection we prove a quantitative interior gradient estimate
that is interesting on its own.
The proof is based on the technique due to Korevaar and Simon \cite{korevaar},
and further developed by Wang \cite{WangX}. We will perform the computations in a coordinate free way.

Let $u$ be a ($C^3$-smooth) positive solution of the equation \eqref{PDE} in a ball $B(p,R) \subset M.$ Suppose that sectional curvatures in $B(p,R)$ are bounded from below by $-K_0^2$ for some constant $K_0=K_0(p,R)\ge 0$.
We consider a nonnegative and smooth function $\eta$ with $\eta=0$ in $M\setminus B(p,R)$ and define a function $\chi$ in $B(p,R)$ of the form 
\begin{equation}\label{chi-def}
\chi = \eta \gamma(u) \psi (|\nabla u|^2),
\end{equation}
where the functions $\eta$, $\gamma$ and $\psi$ will be specified later.

Suppose that $\chi$ attains its maximum at $x_0\in B(p,R)$, and without loss of generality, that $\eta(x_0)\neq 0$. 
Then at $x_0$
	\begin{equation}\label{Foc-1}
		(\log \chi)_j = \frac{\eta_j}{\eta} + \frac{\gamma'}{\gamma} u_j + 2\frac{\psi'}{\psi} u^k u_{k;j} =0
	\end{equation}
and therefore
	\begin{equation}\label{reduction}
		2\frac{\psi'}{\psi}u^k u_{k;j} = -\bigg(\frac{\eta_j}{\eta} + \frac{\gamma'}{\gamma} u_j\bigg).
	\end{equation}
Moreover, the matrix
	\begin{align*}
	(\log\chi)_{i;j} = (\log \eta)_{i;j} &+\bigg(\frac{\gamma'}{\gamma} \bigg)' u_iu_j +\frac{\gamma'}{\gamma} u_{i;j}
	  + 2\frac{\psi'}{\psi} (u^k u_{k;ij}+ u^k_{;i} u_{k;j}) \\
	&+ 4\bigg(\frac{\psi'}{\psi}\bigg)' u^k u_{k;i} u^\ell u_{\ell;j}
	\end{align*}
is non-positive at $x_0$. Applying the Ricci identities for the Hessian of $u$ we have
	\[
		u_{k;ij} = u_{i;kj} = u_{i;jk} + R^\ell_{kji}u_\ell,
	\]
and this yields
	\begin{align*}\label{log-matr}
	(\log\chi)_{i;j} 
	&\,\,= \frac{\eta_{i;j}}{\eta} +\frac{\gamma''}{\gamma}  u_iu_j +\frac{\gamma'}{\gamma} u_{i;j} + \frac{\gamma'}{\gamma}\bigg(\frac{\eta_i}{\eta} u_j + \frac{\eta_j}{\eta}u_i\bigg) \\
	&\,\,\,\,+ 2\frac{\psi'}{\psi} (u^k u_{i;jk}+ u^k_{;i} u_{k;j}) -2\frac{\psi'}{\psi}R_{jki}^\ell u^k u_\ell + 
	4\bigg(\bigg(\frac{\psi'}{\psi}\bigg)' - \frac{\psi'^2}{\psi^2}\bigg) u^k  u_{k;i}  u^\ell u_{\ell;j}.   \nonumber
	\end{align*}
On the other hand, denoting
	\begin{equation*}
		f(x) = nHW - \langle \nabla\log\varrho, \nabla u\rangle \bigg(1+\frac{1}{\varrho^2 W^2}\bigg),
	\end{equation*}
and differentiating both sides in \eqref{non-div-0} we have
	\begin{align}\label{sigma-u-ijk}
		\sigma^{ij} u_{i;jk} = f_k - \sigma^{ij}_{;k} u_{i;j}.
	\end{align}
Contracting \eqref{sigma-u-ijk} with $u^k$, we get
	\begin{align*}
	\sigma^{ij} u^k u_{i;jk}  &= f_k u^k +\frac{1}{W^2} u^k(u^i_{;k} u^j + u^i u^j_{;k})  u_{i;j} \\
		\quad& - \frac{2}{W^4}  u^i u^j u_{i;j} (-\varrho^{-2}(\log\varrho)_k u^k+ u^k u^\ell u_{\ell;k}).
	\end{align*}
Using the previous identity, \eqref{reduction} 
and noticing that 
	\begin{align*}
	\sigma^{ij} R_{jki}^\ell u^k u_\ell 
	&= -{\rm Ric}_g (\nabla u, \nabla u),
	\end{align*}
lengthy computations give
	\begin{align*}
	0 &\ge \sigma^{ij}(\log \chi)_{i;j}=  2n\frac{\psi'}{\psi}\langle \nabla H, \nabla u\rangle W + nH\frac{\gamma'}{\gamma}
	\frac{1}{\varrho^2 W} - nH \frac{1}{W}\bigg\langle \frac{\nabla \eta}{\eta}, \nabla u\bigg\rangle \\
&\quad	- 2nH \frac{1}{\varrho^2 W}\frac{\psi'}{\psi}\langle\nabla\log\varrho, \nabla u\rangle 
 + 4\bigg(\bigg(\frac{\psi'}{\psi}\bigg)' - \frac{\psi'^2}{\psi^2}+\frac{3}{2}\frac{\psi'}{\psi}\frac{1}{W^2}\bigg)
	 \sigma^{i\ell}  u^j u^k  u_{k;i}   u_{j;\ell}\\
&\quad	 + \sigma^{ij} \frac{\eta_{i;j}}{\eta} 
	 + 2\frac{\gamma'}{\gamma}\frac{1}{\varrho^2 W^2}\bigg\langle\frac{\nabla\eta}{\eta}, \nabla u\bigg\rangle 
 +2\frac{\psi'}{\psi} \big({\rm Ric}_g (\nabla u, \nabla u)-\nabla^2\log\varrho(\nabla u, \nabla u)\big)  \\
&\quad +\frac{4|\nabla u|^2}{\varrho^2 W^4}\frac{\psi'}{\psi}\langle\nabla\log\varrho, \nabla u\rangle^2 
	- \frac{2}{\varrho^2 W^4}\langle\nabla\log\varrho, \nabla u\rangle \bigg\langle \frac{\nabla\eta}{\eta}
	+ \frac{\gamma'}{\gamma}\nabla u, \nabla u\bigg\rangle\\
&\quad - 2\frac{\psi'}{\psi}\frac{1}{\varrho^2 W^2} \nabla^2 \log \varrho(\nabla u, \nabla u) 
	+\bigg\langle \nabla \log\varrho, \frac{\nabla \eta}{\eta}\bigg\rangle\bigg(1+\frac{1}{\varrho^2 W^2}\bigg) \\
&\quad - \frac{2}{\varrho^2W^4}  \bigg\langle \frac{\nabla\eta}{\eta}
	+\frac{\gamma'}{\gamma}\nabla u, \nabla u\bigg\rangle \langle \nabla \log\varrho, \nabla u\rangle 
	+\frac{\gamma''}{\gamma} \sigma^{ij}u_i u_j+2\frac{\psi'}{\psi}\sigma^{i\ell} \sigma^{jk} u_{k;i} u_{j;\ell}.
	\end{align*}
Notice that \eqref{reduction} yields to
\begin{align*}
	4\frac{\psi'^2}{\psi^2}\sigma^{i\ell}  u^j u^k  u_{k;i}   u_{j;\ell} &= \bigg|\frac{\nabla\eta}{\eta} 
	+\frac{\gamma'}{\gamma}\nabla u\bigg|^2_\sigma \ge \frac{1}{\varrho^2 W^2} \bigg|\frac{\nabla\eta}{\eta}
	+\frac{\gamma'}{\gamma}\nabla u\bigg|^2_g \\
&=\frac{|\nabla u|^2}{\varrho^2 W^2} \bigg|\frac{\nabla\eta}{|\nabla u|\eta}
	+ \frac{\gamma'}{\gamma}\frac{\nabla u}{|\nabla u|}\bigg|^2_g.
\end{align*}
Plugging this into the previous estimate, we get
\begin{align*}
\frac{\psi^2}{\psi'^2} &\bigg(\bigg(\frac{\psi'}{\psi}\bigg)' - \frac{\psi'^2}{\psi^2}
	+\frac{3}{2}\frac{\psi'}{\psi}\frac{1}{W^2}\bigg) \frac{|\nabla u|^2}{\varrho^2 W^2} 
	\bigg|\frac{\nabla\eta}{|\nabla u|\eta} +\frac{\gamma'}{\gamma}\frac{\nabla u}{|\nabla u|}\bigg|^2_g \\ 
&\quad \quad \quad +\frac{\gamma''}{\gamma} \sigma^{ij}u_i u_j
	+2\frac{\psi'}{\psi}\sigma^{i\ell} \sigma^{jk} u_{k;i} u_{j;\ell}  \\
&\le  2n\frac{\psi'}{\psi} | \nabla H| | \nabla u| W+2n\frac{\psi'}{\psi}|H| \frac{|\nabla u |}{W}\frac{|\nabla\log\varrho|}{\varrho^2}\\
&	-2\frac{\psi'}{\psi} \big({\rm Ric}_g (\nabla u, \nabla u)-\nabla^2\log\varrho(\nabla u, \nabla u)\big)  
+ 4\frac{\psi'}{\psi}\frac{|\nabla\log\varrho|^2}{\varrho^2} \frac{|\nabla u|^4}{W^4}\\
&	+ 2\frac{\psi'}{\psi}\frac{|\nabla^2 \log \varrho|}{\varrho^2}\frac{|\nabla u|^2}{W^2}+n|H|\frac{\gamma'}{\gamma}\frac{1}{\varrho^2 W}
	+ n|H| \bigg|\frac{\nabla \eta}{\eta}\bigg|\frac{|\nabla u|}{W} - \sigma^{ij} \frac{\eta_{i;j}}{\eta} 
	 \\
&+ 2\frac{\gamma'}{\gamma}\frac{1}{\varrho^2 W}\frac{|\nabla u|}{W}\bigg|\frac{\nabla\eta}{\eta}\bigg|+ 4\frac{|\nabla\log\varrho|}{\varrho^2} \bigg(\bigg|\frac{\nabla\eta}{\eta}\bigg| \frac{|\nabla u|^2}{W^4}
	+ \frac{\gamma'}{\gamma}\frac{|\nabla u|^3}{W^4}\bigg)\\
&	+|\nabla \log\varrho|\bigg|\frac{\nabla \eta}{\eta}\bigg|  \bigg(1+\frac{1}{\varrho^2 W^2}\bigg).
\end{align*}

Suppose that $|\nabla u|(x_0)>1$. Otherwise we are done.
Hence, following \cite{WangX}, we set
\begin{equation}
\psi(t) = \log t,
\end{equation}
where $t=|\nabla u|^2$. Then we have
	\begin{align*}
	&\frac{|\nabla u|^2}{W^2}\frac{\psi^2}{\psi'^2} \bigg(\bigg(\frac{\psi'}{\psi}\bigg)' - \frac{\psi'^2}{\psi^2}
	+\frac{3}{2}\frac{\psi'}{\psi}\frac{1}{W^2}\bigg)
 = \frac{t}{W^2}\bigg( \log t \frac{\frac{1}{2}t-\varrho^{-2}}{t+\varrho^{-2}} -2 \bigg).
	\end{align*}
Now we fix a constant 
\[
\max\left\{\frac{2}{3}, \frac{\varrho^2}{1+\varrho^2}\right\}< \beta <1
\]
 and suppose that
	\begin{equation}\label{estnabla1}
		\frac{t}{W^2} =\frac{|\nabla u|^2}{W^2} \ge \beta.
	\end{equation}
Setting $\frac{1}{\varrho^2}\frac{\beta}{1-\beta} =: e^{\delta'}$, 
$
\delta = \frac{3}{2}\beta-1,$ 
and
	$
		\mu := 2\beta\frac{\delta\delta'-2}{\delta'},
	$
we get
\begin{align*}
	&\mu \log |\nabla u|\frac{1}{\varrho^2} \bigg|\frac{\nabla\eta}{|\nabla u|\eta}
	+ \frac{\gamma'}{\gamma}\frac{\nabla u}{|\nabla u|}\bigg|^2_g  
	+\frac{\gamma''}{\gamma} \frac{|\nabla u|^2}{\varrho^2 W^2} \\
&\quad\quad\quad + 2\frac{\psi'}{\psi} |\nabla u|^2\bigg(\Ric_g \bigg(\frac{\nabla u}{|\nabla u|}, \frac{\nabla u}{|\nabla u|}\bigg)-\nabla^2\log\varrho\bigg(\frac{\nabla u}{|\nabla u|}, \frac{\nabla u}{|\nabla u|}\bigg)\bigg)  \\
&\,\, 
\le  \frac{2}{\sqrt\beta \delta'} \big(n|\nabla H|+ (1-\beta)n|H| |\nabla \log\varrho|+2(1-\beta)|\nabla\log\varrho|^2+(1-\beta)|\nabla^2\log\varrho|\big)\\
& \,\,\,\, +\sqrt{1-\beta} \frac{1}{\varrho}\frac{\gamma'}{\gamma} \big(n|H|+4|\nabla\log\varrho|\big)+2\sqrt{1-\beta}\frac{1}{\varrho }\frac{\gamma'}{\gamma}\bigg|\frac{\nabla\eta}{\eta}\bigg|\\ 
&\,\,\,\,
	+  \bigg|\frac{\nabla\eta}{\eta}\bigg|\big(n|H|+(6-5\beta)|\nabla\log\varrho|\big) -\sigma^{ij}\frac{\eta_{i;j}}{\eta}.
\end{align*}
 By modifying the argument in \cite[Proof of Theorem 4.1, Case 2]{RSS} we may assume 
that the maximum point $x_0$ is not in the cut-locus $C(p)$ of $p$. 
Then we choose $\eta$ as
\begin{equation}
\eta = \hat\eta^2
\end{equation}
where
\begin{equation}
\hat\eta = 1- \frac{1}{C_R} \int_0^r \xi(\tau)\, {\rm d}\tau,
\quad r=d(\cdot,p),
\end{equation}
with
\[
C_R = \int_0^R \xi(\tau)\, {\rm d}\tau
\]
and $\xi(\tau)=K_0^{-1}\sinh(K_0\tau)$ if $K_0>0$ and $\xi(\tau)=\tau$ if $K_0=0$.
Denoting
\begin{equation*}
\kappa = \varrho^{-2}\langle \bar\nabla_X\bar\nabla r, X\rangle = \langle \nabla r, \nabla \log\varrho\rangle , 
\end{equation*}
one can show that $|\nabla \eta| = 2\hat \eta \frac{\xi(r)}{C_R}$ and
\begin{eqnarray*}
& & \Delta_\Sigma \eta = 2 \hat\eta \Delta_\Sigma \hat\eta + 2 |\nabla^\Sigma\hat\eta|^2 \\
& & \,\, \le  2\hat\eta(r)\frac{\xi(r)}{C_R}\bigg| ( n-1)\frac{\xi'(r)}{\xi(r)}+\kappa + n|H| +(1-\beta)\bigg|\frac{\xi'(r)}{\xi(r)}-\kappa\bigg|\bigg|+ 2 \frac{\xi^2(r)}{C_R^2}.
\end{eqnarray*}
As in \cite{WangX},  we set
\begin{equation*}
\gamma (u) = 1 +\frac{1}{M} (\min_{\bar{B}(p,R)}\varrho)u 
\end{equation*}
where $M>0$ is a constant to be fixed later. Then $\gamma''=0$ and
hence
\begin{align}
\label{ineq-almost}
	&\mu \log |\nabla u|\frac{1}{\varrho^2} \bigg|\frac{\nabla\eta}{|\nabla u|\eta}
	+ \frac{\gamma'}{\gamma}\frac{\nabla u}{|\nabla u|}\bigg|^2_g  
\nonumber\\
&\quad\quad\quad + 2\frac{\psi'}{\psi} |\nabla u|^2\bigg(\Ric_g \bigg(\frac{\nabla u}{|\nabla u|}, \frac{\nabla u}{|\nabla u|}\bigg)-\nabla^2\log\varrho\bigg(\frac{\nabla u}{|\nabla u|}, \frac{\nabla u}{|\nabla u|}\bigg)\bigg)  \le  \widetilde M\frac{1}{M\eta},
\end{align}
where
\begin{align}\label{mtilde}
& \widetilde M=\frac{2}{\sqrt\beta \delta'} \big(n|\nabla H|+ (1-\beta)n|H| |\nabla \log\varrho|+2(1-\beta)|\nabla\log\varrho|^2
\nonumber\\
& \,\,\,\, +(1-\beta)|\nabla^2\log\varrho|\big)M\eta
+\sqrt{1-\beta}  \big(n|H|+4|\nabla\log\varrho|\big)\eta +4\sqrt{1-\beta} \frac{\xi(r)}{C_R}\hat{\eta}\nonumber\\
&\,\,\,\,
	+ 2\frac{\xi(r)}{C_R} \big(n|H|+(6-5\beta)|\nabla\log\varrho|\big)M \hat{\eta}\\
	& \,\,\,\, + M\bigg(2\hat\eta(r)\frac{\xi(r)}{C_R}\bigg| ( n-1)\frac{\xi'(r)}{\xi(r)}+\kappa + n|H| +(1-\beta)\bigg|\frac{\xi'(r)}{\xi(r)}-\kappa\bigg|\bigg|+ 2 \frac{\xi^2(r)}{C_R^2}\bigg).\nonumber
\end{align}
Let $L = L(p,R)\ge 0$ be chosen in such a way that
\begin{equation}\label{Ldef}
{\rm Ric}_g + \nabla^2\log\varrho \ge -L g
\end{equation}
in $B(p,R)$. Then 
we obtain 
\begin{align*}
	&\mu \log |\nabla u|\frac{1}{\varrho^2} \bigg|\frac{\nabla\eta}{|\nabla u|\eta}
	+ \frac{\gamma'}{\gamma}\frac{\nabla u}{|\nabla u|}\bigg|^2_g   - 2 L \frac{1}{\delta'} \le  \widetilde M\frac{1}{M\eta}.
\end{align*}
Set $
M = \max_{\bar{B}(p,R)} u$. 
We consider first the case
	\[
	\left| \frac{\nabla \eta}{|\nabla u| \eta} \right| \le \frac{\gamma'}{2\gamma}.
	\]
Then we have
\begin{align*}
	\eta \log |\nabla u| \le  \frac{4\gamma^2 \varrho^2}{\mu \min_{\bar{B}(p,R)}\varrho^2} \bigg(\widetilde M M + 2LM^2\frac{\eta}{\delta'}\bigg).
\end{align*}
On the other hand, when 
	\[
	\frac{\gamma'}{2\gamma} \le \left| \frac{\nabla \eta}{|\nabla u| \eta} \right|
	\]
we have
\begin{equation*}
\eta |\nabla u| \le \frac{4\gamma }{\gamma^\prime}\frac{\xi(r)}{C_R}.
\end{equation*}
which implies that
\begin{equation*}
\eta \log|\nabla u| \le \frac{4\gamma }{\gamma^\prime }\frac{\xi(r)}{C_R}.
\end{equation*}
Hence at $x_0$
\begin{equation}
\eta \log|\nabla u|  \le 
\max\left\{\frac{4\gamma(u(x_0))\xi(r(x_0))}{\gamma'(u(x_0)) C_R}, \frac{4\gamma^2(u(x_0))\varrho^2(x_0)}{\mu\min_{\bar{B}(p,R)}\varrho^2}\bigg(\widetilde M M + 2LM^2\frac{1}{\delta'}\bigg)\right\}.
\end{equation}
Since $\eta(p)=1$ and $\gamma(p)\ge 1$ we conclude that 
\begin{align}\label{intgradest}
&\log |\nabla u(p)| \le \eta(p)\gamma(p)\log |\nabla u(p)| 
\le \eta(x_0)\gamma(x_0)\log|\nabla u(x_0)|
\\
&\le 
\frac{4M(1+\min_{\bar{B}(p,R)}\varrho)^2}{\min_{\bar{B}(p,R)}\varrho}
\max\left\{\frac{\xi(r(x_0))}{C_R},\frac{(1+\min_{\bar{B}(p,R)}\varrho)\varrho^2(x_0)}{\mu\min_{\bar{B}(p,R)}\varrho}\bigg(\widetilde M  + 2LM\frac{1}{\delta'}\bigg)\right\}\nonumber
\end{align}
unless $|\nabla u(x_0)|\le 1$. 

We have proven the following quantitative gradient estimate. Here we denote by ${\mathcal{R}_B}$ the Riemannian curvature tensor in a set $B$.

\begin{lem}\label{int-grad-est}
Let $u$ be a positive solution of \eqref{PDE} in an open set $\Omega$ and let $B=B(p,R)\subset\Omega$. Then there exists a constant
$
C=C({\mathcal{R}}_{B},\varrho|B,H|B,u(p),\max_{\bar{B}}u,R)
$
such that
	\begin{equation*}
	|\nabla u(p)| \le C.
	\end{equation*}
\end{lem}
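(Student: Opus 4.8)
The plan is to follow the Korevaar--Simon maximum principle technique, as presented in the long computation preceding the statement, and simply organize it so that the final bound on $|\nabla u(p)|$ comes out as claimed. First I would set up the test function $\chi = \eta\gamma(u)\psi(|\nabla u|^2)$ with $\eta=\hat\eta^2$ the explicit radial cutoff built from $\xi(\tau)=K_0^{-1}\sinh(K_0\tau)$, $\gamma(u)=1+\tfrac{1}{M}(\min_{\bar B}\varrho)u$ with $M=\max_{\bar B}u$, and $\psi(t)=\log t$. If $\chi$ attains its maximum at an interior point $x_0$ with $|\nabla u(x_0)|\le 1$, the conclusion is immediate from $\eta(p)\gamma(p)=\eta(p)\gamma(p)\ge 1$; otherwise I may assume $|\nabla u(x_0)|>1$, and (after the reduction discussed in \cite[Proof of Theorem 4.1, Case 2]{RSS}) that $x_0\notin C(p)$, so all the derivatives of $r=d(\cdot,p)$ appearing below are legitimate.

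The core step is the differential inequality at $x_0$: from $(\log\chi)_j=0$ one gets \eqref{reduction}, and from the non-positivity of $(\sigma^{ij})(\log\chi)_{i;j}$ together with the Ricci identity, the identity \eqref{sigma-u-ijk} obtained by differentiating the equation \eqref{non-div-0}, and the contraction with $u^k$, one arrives at the displayed chain of inequalities culminating in \eqref{ineq-almost} and \eqref{mtilde}. The curvature lower bound \eqref{Ldef}, namely $\Ric_g+\nabla^2\log\varrho\ge -Lg$ in $B(p,R)$, is used precisely to bound the term $2\tfrac{\psi'}{\psi}|\nabla u|^2(\Ric_g(\cdot,\cdot)-\nabla^2\log\varrho(\cdot,\cdot))$ from below by $-2L/\delta'$. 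Then I split into the two cases $|\nabla\eta/(|\nabla u|\eta)|\le \gamma'/(2\gamma)$ and $\gamma'/(2\gamma)\le |\nabla\eta/(|\nabla u|\eta)|$: in the first case the gradient-of-$\eta$ term is absorbed and one solves the resulting inequality for $\eta\log|\nabla u|$, in the second case one directly gets $\eta|\nabla u|\le 4\gamma\xi(r)/(\gamma' C_R)$, hence the same type of bound on $\eta\log|\nabla u|$. Taking the maximum of the two bounds and evaluating at $p$, where $\eta(p)=1$ and $\gamma(p)\ge1$, yields \eqref{intgradest}, and all quantities appearing there ($M$, $L$, $\mu$, $\delta'$, $C_R$, $\xi$, $\min_{\bar B}\varrho$, $\sup_{\bar B}\varrho$, $\sup_{\bar B}|H|$, $\sup_{\bar B}|\nabla H|$, $\sup_{\bar B}|\nabla\log\varrho|$, $\sup_{\bar B}|\nabla^2\log\varrho|$) depend only on $\mathcal R_B$, $\varrho|B$, $H|B$, $u(p)$, $\max_{\bar B}u$, and $R$; exponentiating gives the stated constant $C$.

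The main obstacle I anticipate is purely bookkeeping: keeping track of all the lower-order terms in the expansion of $\sigma^{ij}(\log\chi)_{i;j}$ — the many products of $\nabla\eta/\eta$, $\gamma'/\gamma$, $\nabla\log\varrho$, $\nabla^2\log\varrho$, $H$, $\nabla H$, and powers of $|\nabla u|/W$ — and estimating each of them, using $\beta<t/W^2\le 1$ (from \eqref{estnabla1}) and the choices of $\beta$, $\delta$, $\delta'$, $\mu$, so that the "good" terms (the coefficient of $|\nabla\eta/(|\nabla u|\eta)+(\gamma'/\gamma)\nabla u/|\nabla u||_g^2$ coming from $(\psi'/\psi)'-\psi'^2/\psi^2+\tfrac32(\psi'/\psi)W^{-2}$, and $2(\psi'/\psi)\sigma^{i\ell}\sigma^{jk}u_{k;i}u_{j;\ell}\ge0$) dominate. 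One must also verify the eigenvalue/ellipticity bookkeeping for $(\sigma^{ij})$ and the sign $h''-\kappa h'\le 0$-type manipulations implicit in \eqref{defig}'s analogue here, and confirm that the second-case estimate $\eta|\nabla u|\le 4\gamma\xi(r)/(\gamma' C_R)$ indeed implies the logarithmic bound since $\log t\le t$. None of these steps is conceptually hard once $\beta$ is fixed in $(\max\{2/3,\varrho^2/(1+\varrho^2)\},1)$; the delicacy is only in ensuring every constant is controlled by the listed data uniformly over $\bar B(p,R)$.
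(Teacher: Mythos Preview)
Your proposal is correct and follows essentially the same approach as the paper: the long computation in Section~3.3 preceding Lemma~\ref{int-grad-est} \emph{is} the proof, and you have accurately summarized its structure --- the Korevaar--Simon test function $\chi=\eta\gamma(u)\psi(|\nabla u|^2)$ with the specific choices of $\eta,\gamma,\psi$, the maximum-point analysis leading to \eqref{ineq-almost}--\eqref{mtilde}, the curvature bound \eqref{Ldef}, the two-case split, and the final inequality \eqref{intgradest}. One minor remark: your parenthetical reference to ``$h''-\kappa h'\le 0$-type manipulations implicit in \eqref{defig}'s analogue'' is a slip, since \eqref{defig} belongs to the boundary gradient estimate and plays no role here.
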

If the gradient of $u$ is continuous up to the boundary of $\Omega$ and $\Omega$ is bounded, we obtain the following quantitative global estimate.
\begin{lem}\label{glob-int-grad-est}
Let $u$ be a positive solution of \eqref{PDE} in a bounded open set $\Omega$
and suppose, moreover, that $u\in C^1(\bar{\Omega})$. Then there exists a constant
\[
C=C({\mathcal{R}}_{\Omega},\varrho|\Omega,H|\Omega,u(p),\max_{\bar{\Omega}}u,\diam(\Omega),\max_{\partial\Omega}|\nabla u|)
\]
such that
	\begin{equation*}
	|\nabla u(p)| \le C
	\end{equation*}
	for every $p\in\bar{\Omega}$.
\end{lem}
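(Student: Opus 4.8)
The plan is to re-run the Korevaar--Simon argument from the proof of Lemma~\ref{int-grad-est}, but this time globally on the compact set $\bar\Omega$ rather than on a ball contained in $\Omega$. Concretely, I would take the cut-off $\eta$ in \eqref{chi-def} to be $\equiv 1$ on a neighbourhood of $\bar\Omega$, keep the same auxiliary factors $\psi(t)=\log t$ with $t=|\nabla u|^2$ and $\gamma(u)=1+M^{-1}\bigl(\min_{\bar\Omega}\varrho\bigr)u$ with $M=\max_{\bar\Omega}u$, and set $\chi=\gamma(u)\psi(|\nabla u|^2)$. Because $u\in C^1(\bar\Omega)$ and $\bar\Omega$ is compact, $\chi$ is continuous on $\bar\Omega$ and attains its maximum at some $x_0\in\bar\Omega$. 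Fix $p\in\bar\Omega$; we may assume $|\nabla u(p)|>1$, since otherwise there is nothing to prove, so that $\chi(x_0)\ge\chi(p)>0$ and hence $|\nabla u(x_0)|>1$ as well.

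The argument then splits into two cases. If $x_0\in\partial\Omega$, then using $\gamma(u(p))\ge 1$ and $\gamma\le 1+\min_{\bar\Omega}\varrho$ on $\bar\Omega$ we get
\[
\log|\nabla u(p)|^2\le\chi(p)\le\chi(x_0)\le\bigl(1+\min_{\bar\Omega}\varrho\bigr)\log\bigl(1+\max_{\partial\Omega}|\nabla u|^2\bigr),
\]
which is already the desired bound. If instead $x_0\in\Omega$, then $u$ is $C^3$ near $x_0$ by interior regularity for \eqref{PDE}, so the relations \eqref{Foc-1}--\eqref{reduction} and $\sigma^{ij}(\log\chi)_{i;j}\le 0$ hold at $x_0$; moreover, with $\eta\equiv 1$ every term containing $\nabla\eta/\eta$ or $\eta_{i;j}/\eta$ drops out, which also removes the cut-locus subtlety arising in the proof of Lemma~\ref{int-grad-est}. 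Running the very same computation that leads to \eqref{ineq-almost}--\eqref{intgradest}, now with $\hat\eta\equiv 1$ (so that no $\xi(r)/C_R$ terms remain) and with the constant $L$ of \eqref{Ldef} chosen so that $\Ric_g+\nabla^2\log\varrho\ge -Lg$ holds on all of $\Omega$, one arrives at an inequality of the form
\[
\mu\,\frac{\log|\nabla u(x_0)|}{\varrho^2(x_0)}\Bigl(\frac{\gamma'}{\gamma}\Bigr)^2-\frac{2L}{\delta'}\le\frac{\widetilde M}{M},
\]
where $\widetilde M$ is the right-hand side of \eqref{mtilde} with all $\eta$-terms deleted, hence controlled in terms of $\sup_\Omega\bigl(|H|,|\nabla H|,|\nabla\log\varrho|,|\nabla^2\log\varrho|\bigr)$ and $M$. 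Since $\gamma'/\gamma\ge\bigl(\min_{\bar\Omega}\varrho\bigr)\bigl(M(1+\min_{\bar\Omega}\varrho)\bigr)^{-1}$ is bounded below in terms of $\varrho|\Omega$ and $M$, this yields $\log|\nabla u(x_0)|\le C_1$ with $C_1=C_1\bigl(\mathcal{R}_\Omega,\varrho|\Omega,H|\Omega,\max_{\bar\Omega}u\bigr)$, and then $\log|\nabla u(p)|^2\le\chi(p)\le\chi(x_0)\le 2\bigl(1+\min_{\bar\Omega}\varrho\bigr)C_1$, which proves the claim. (The dependence on $\diam(\Omega)$ enters only if one prefers to keep $\eta$ compactly supported, namely by choosing $\eta$ to be the cut-off of Lemma~\ref{int-grad-est} associated with a ball $B(q,R)\supset\bar\Omega$, $q\in\bar\Omega$, $R>\diam(\Omega)$, and estimating the residual $\xi(r)/C_R$ terms as there.)

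The only genuinely new ingredient relative to Lemma~\ref{int-grad-est} is the case in which $\chi$ attains its maximum on $\partial\Omega$, and this is exactly where the two standing hypotheses are used: the boundedness of $\Omega$ makes $\bar\Omega$ compact so that $\chi$ attains a maximum on it, and the assumption $u\in C^1(\bar\Omega)$ makes $\max_{\partial\Omega}|\nabla u|$ finite so that the boundary case can be closed. The interior case is literally the computation of Lemma~\ref{int-grad-est} with every cut-off term erased, so it is strictly easier and free of the cut-locus issue; I therefore do not expect a serious obstacle, only the bookkeeping of verifying that the terms of \eqref{mtilde} surviving at $\eta\equiv 1$ depend solely on the quantities listed in the statement.
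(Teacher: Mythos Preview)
Your proposal is correct, and in fact your parenthetical remark is precisely what the paper does: it fixes $p\in\Omega$, takes $R=\diam(\Omega)$, and defines $\chi=\eta\gamma(u)\psi(|\nabla u|^2)$ on $\bar\Omega\cap B(p,R)$ with the \emph{same} cut-off $\eta$ as in Lemma~\ref{int-grad-est}; if the maximum of $\chi$ is interior the entire computation of that lemma applies verbatim, and if it falls on $\partial\Omega$ one uses $|\nabla u(x_0)|\le\max_{\partial\Omega}|\nabla u|$.

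Your main approach, taking $\eta\equiv 1$, is a legitimate and slightly different route. It has two advantages: the cut-locus argument disappears entirely, and the constant loses its dependence on $\diam(\Omega)$ (so you actually prove a marginally stronger statement). The cost is that you cannot simply quote Lemma~\ref{int-grad-est}; you must re-inspect the long inequality leading to \eqref{ineq-almost} and check that every surviving term is controlled by the data listed. You have identified this bookkeeping correctly: with $\nabla\eta=0$ the case split ``$|\nabla\eta|/(|\nabla u|\eta)$ versus $\gamma'/(2\gamma)$'' collapses to the single inequality you wrote, and $\widetilde M$ in \eqref{mtilde} reduces to the first block of terms only. The paper's choice to retain $\eta$ is purely economical---it avoids rewriting anything---whereas your choice is conceptually cleaner.
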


\begin{proof}
Let $p\in\Omega$ and $R=\diam(\Omega)$.
Define in $\bar{\Omega}\cap B(p,R)$ a function 
\[
\chi = \eta \gamma(u) \psi (|\nabla u|^2),
\]
where $\eta,\ \gamma$, and $\psi$ are as in the previous proof. If 
$\chi$ attains its maximum in an interior point 
$x_0\in B(p,R)\cap\Omega$, the proof of Lemma~\ref{int-grad-est} applies and we have a desired upper bound. Otherwise, $\chi$ attains its maximum at $x_0\in\partial\Omega$, but then 
$|\nabla u(x_0)|\le\max_{\partial\Omega}|\nabla u|$
and again we are done.
\end{proof}

We remark that a global gradient estimate for bounded Killing graphs  follows immediately from \eqref{intgradest}, \eqref{mtilde}, and \eqref{Ldef} in the case of bounded warping functions under some assumptions on the curvature.

\begin{cor}
\label{global-grad}
 Suppose that the sectional curvatures in $M$ satisfy $K_M \ge -K_0$ for some positive constant $K_0$. Suppose also that $\inf_M\varrho >0$ and that $|| \varrho||_{C^2(M)} <+\infty$.  If a function $u:M\to \mathbb{R}$ is uniformly bounded and the mean curvature of its graph satisfies $||H||_{C^1(M)}<+\infty$ then the gradient of $u$ is uniformly bounded.
\end{cor}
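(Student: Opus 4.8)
The plan is to deduce Corollary~\ref{global-grad} directly from the quantitative interior estimate \eqref{intgradest}, together with \eqref{mtilde} and \eqref{Ldef}, by observing that under the stated hypotheses all the quantities appearing on the right-hand side of \eqref{intgradest} can be bounded by constants independent of the point $p\in M$. Concretely, fix any $p\in M$ and apply Lemma~\ref{int-grad-est} (equivalently the estimate \eqref{intgradest}) on a ball $B(p,R)$ with $R=1$, say. Since $K_M\ge -K_0$ everywhere, we may take the same $K_0$ in the definition of $\xi$ for \emph{every} ball $B(p,1)$, so $\xi(\tau)=K_0^{-1/2}\sinh(K_0^{1/2}\tau)$ on $[0,1]$ and $C_R=\int_0^1\xi$ are fixed numbers; hence $\xi(r)/C_R$ is bounded by a universal constant on $B(p,1)$. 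Likewise $\inf_M\varrho>0$ and $\|\varrho\|_{C^2(M)}<\infty$ make $\min_{\bar B(p,1)}\varrho$, $\varrho^2(x_0)$, $|\nabla\log\varrho|$, and $|\nabla^2\log\varrho|$ all uniformly controlled, and $\|H\|_{C^1(M)}<\infty$ controls $|H|$ and $|\nabla H|$ uniformly.

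The one genuinely global input needed is a uniform bound on $M=\max_{\bar B(p,1)}u$ and on $u(p)$: this is exactly where the hypothesis that $u$ is uniformly bounded enters, giving $|u|\le U_0$ on all of $M$ for some constant $U_0$, so $M\le U_0$ uniformly in $p$. There is a minor bookkeeping point: the interior estimate was derived for a \emph{positive} solution, whereas a uniformly bounded $u$ need not be positive; this is harmless because replacing $u$ by $u+U_0+1>0$ does not change $\nabla u$, and the mean curvature equation \eqref{PDE} is invariant under adding a constant to $u$ (it is a consequence of the fact that $\partial_t$ is Killing, equivalently that only $\nabla u$ and $\Hess u$ appear in \eqref{non-div-0}), so the estimate applies verbatim to the shifted function with the same constants. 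Finally, one must check that the constant $L$ from \eqref{Ldef} can be chosen uniformly: $\Ric_g\ge -(n-1)K_0\,g$ from the sectional curvature lower bound, and $\nabla^2\log\varrho$ is bounded below by $-\|\log\varrho\|_{C^2(M)}g$ (finite since $\inf\varrho>0$ and $\|\varrho\|_{C^2}<\infty$), so $L$ is a fixed constant independent of $p$.

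Assembling these observations, every term in \eqref{mtilde} is bounded by a constant depending only on $n$, $K_0$, $\inf_M\varrho$, $\|\varrho\|_{C^2(M)}$, $\|H\|_{C^1(M)}$, and $U_0$ — and not on $p$ — so $\widetilde M\le \widetilde M_0$ uniformly; then \eqref{intgradest} yields $\log|\nabla u(p)|\le C_0$ with $C_0$ a single constant valid for all $p\in M$ (in the alternative case $|\nabla u(x_0)|\le 1$ of that estimate the bound $|\nabla u(p)|\le e\cdot 1$ is trivial). Since $p$ was arbitrary, $\sup_M|\nabla u|<\infty$, which is the claim.

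I do not expect a serious obstacle here: the work was already done in proving Lemma~\ref{int-grad-est}, and the corollary is essentially the remark that the constant there is \emph{scale- and location-uniform} once one imposes global two-sided curvature and warping bounds plus a global height bound. The only thing requiring a line of care is the positivity normalization described above.
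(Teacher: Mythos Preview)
Your proposal is correct and follows precisely the route indicated by the paper, which does not give a separate proof but simply remarks that the corollary ``follows immediately from \eqref{intgradest}, \eqref{mtilde}, and \eqref{Ldef}''. Your write-up is a faithful (and more detailed) elaboration of that remark, including the appropriate handling of the positivity normalization and the uniform choice of $L$.
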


\section{Global barriers}
In this section we present two methods to obtain global (upper and lower) barriers for solutions to \eqref{dir-aux}.

In the case when $\widetilde H$ is constant along flow lines of $X$, that is, when $\widetilde H$ is a function in $M$, there is a conservation law (a flux formula) corresponding to the invariance of $\mathcal{A}_{\widetilde H}$ with respect to the flow generated by $X$.  This flux formula for graphs is stated as
\begin{equation}
\label{flux}
\int_{\Gamma} \Big\langle \frac{\nabla u}{W}, \nu\Big\rangle \varrho \,{\rm d}\Gamma =  \int_\Omega n\widetilde H\varrho \, {\rm d}M,
\end{equation}
where $\Gamma = \partial\Omega$ and $\nu$ is the outward unit normal vector field along $\Gamma \subset M$. 

Suppose for a while that $M$ is a model manifold with respect to a fixed pole $o\in M$ and that $\varrho=|X|$ is a radial function. In terms of polar coordinates $(r, \vartheta)\in \mathbb{R}^+\times\mathbb{S}^{n-1}$ centered at $o$ the metric in $M$ is of the form
\[
g = {\rm d}r^2 + \xi^2(r)\, {\rm d}\vartheta^2,
\]
where ${\rm d}\vartheta^2$ stands for the canonical metric in $\mathbb{S}^{n-1}$. Suppose that $\widetilde H$ and $u$ are also  radial functions. Applying (\ref{flux}) to $\Omega=B(o,r)$, the geodesic ball centered at $o$ with radius $r$, we obtain
\begin{equation}
\label{flux-2}
\frac{u'(r)}{\sqrt{\varrho^{-2}(r)+u'^2(r)}} \varrho(r) \xi^{n-1} (r) = \int_0^r n\widetilde H(\tau) \varrho(\tau) \xi^{n-1}(\tau) \, {\rm d}\tau
\end{equation}
This is a first integral of (\ref{PDE}) in this rotationally invariant setting. Indeed, taking derivatives on both sides of (\ref{flux-2}) with respect to $r$ we get
\begin{eqnarray*}
& & n\widetilde H(r)=\bigg(\frac{u'(r)}{\sqrt{\varrho^{-2}(r)+u'^2(r)}} \bigg)' + \frac{u'(r)}{\sqrt{\varrho^{-2}(r)+u'^2(r)}}  \bigg(\frac{\varrho'(r)}{\varrho(r)} + (n-1)\frac{\xi'(r)}{\xi(r)}\bigg).
\end{eqnarray*}
On the other hand in this particular setting (\ref{PDE})  becomes
\begin{eqnarray*}
& & n\widetilde H(r) = {\rm div}\bigg(\frac{u'(r)}{\sqrt{\varrho^{-2}(r)+u'^2(r)}} \partial_r\bigg) +\bigg(\frac{u'(r)}{\sqrt{\varrho^{-2}(r)+u'^2(r)}} \bigg) \frac{\varrho'(r)}{\varrho(r)}\\
& &\,\, =  \bigg(\frac{u'(r)}{\sqrt{\varrho^{-2}(r)+u'^2(r)}} \bigg)' + \frac{u'(r)}{\sqrt{\varrho^{-2}(r)+u'^2(r)}} \,{\rm div}\,\partial_r\\
& &\quad  +\bigg(\frac{u'(r)}{\sqrt{\varrho^{-2}(r)+u'^2(r)}} \bigg) \frac{\varrho'(r)}{\varrho(r)}\\
& & \,\, = \bigg(\frac{u'(r)}{\sqrt{\varrho^{-2}(r)+u'^2(r)}}\bigg)' + \frac{u'(r)}{\sqrt{\varrho^{-2}(r)+u'^2(r)}} \bigg( (n-1)\frac{\xi'(r)}{\xi(r)} +\frac{\varrho'(r)}{\varrho(r)}\bigg).
\end{eqnarray*}
It is convenient to write (\ref{flux-2}) in a ``quadrature'' form as follows 
\begin{equation}
\label{flux-3}
u'^2(r) = \frac{I^2(r)\varrho^{-2}(r) }{\varrho^2(r)\xi^{2(n-1)}(r)-I^2(r)},
\end{equation}
where
\[
I(r) = \int_0^r n\widetilde H(\tau) \varrho(\tau) \xi^{n-1}(\tau)\, {\rm d}\tau.
\]
For instance, in the case when $\widetilde H$ is constant we have to impose a condition such as
\begin{equation}
\label{ratio}
n|\widetilde H| \le \liminf_{r\to\infty} \frac{\varrho(r)\xi^{n-1}(r)}{\int_0^r \varrho(\tau) \xi^{n-1}(\tau)\, {\rm d}\tau}
\end{equation}
in order to guarantee the existence of radial solutions $u=u(r)$ to (\ref{PDE}) for model manifolds. Note that the right-hand side in (\ref{ratio}) is a sort of weighted isoperimetric ratio in $M$ with respect to the density $\varrho(r(x)) = |X(x)|$. 
By l'Hospital's rule we see that \eqref{ratio} is equivalent to the requirement
	\begin{equation}\label{asympHreq}
	n|\widetilde H| \le \liminf_{r\to\infty} \, (n-1) \frac{\xi'(r)}{\xi(r)} + \frac{\varrho'(r)}{\varrho(r)}.
	\end{equation}

This discussion motivates us to define in the general case a function of the form
	\begin{align}\label{u_+definition}
	u_+(x)  &= u_+\big(r(x)\big) \nonumber \\ 
	&= \bigintssss_{r(x)}^{+\infty} \frac{\int_0^\tau n\widetilde H(s) \varrho_+(s) \xi_+^{n-1}(s)\, {\rm d}s }{\varrho_+(\tau)\sqrt{\varrho_+^2(\tau)\xi_+^{2(n-1)}(\tau)-\big(\int_0^\tau n\widetilde H(s) \varrho_+(s) \xi_+^{n-1}(s)\, {\rm d}s\big)^2}}\, {\rm d}\tau \\
&\quad + ||\varphi||_{C^0(\partial_\infty M)} 
	\end{align} 
for some nonnegative functions $\varrho_+(r(x))$, $\xi_+(r(x))$ and $\widetilde H(r(x))$ to be chosen later. 

Plugging $u_+(x) = u_+(r(x))$ into the differential operator
\[
\mathcal{Q}[u] = {\rm div} \Big(\frac{\nabla u}{W}\Big) +\Big\langle \nabla \log\varrho, \frac{\nabla u}{W}\Big\rangle - nH
\]
 yields
\begin{eqnarray*}
& &  \mathcal{Q}[u_+] =\bigg\langle \nabla\frac{u_+'(r)}{(\varrho^{-2}(x)+{u'}^2_+(r))^{1/2}},  \partial_r\bigg\rangle \\
& & \qquad + \frac{u_+'(r)}{(\varrho^{-2}(x)+{u'}^2_+(r))^{1/2}} \bigg( {\rm div}\,\partial_r +\frac{1}{\varrho}\langle \nabla \varrho, \partial_r\rangle\bigg)-nH\\
& &  = \partial_r\bigg(\frac{u_+'(r)}{(\varrho^{-2}(x)+{u'}^2_+(r))^{1/2}}\bigg) + \frac{u_+'(r)}{(\varrho^{-2}(x)+{u'}^2_+(r))^{1/2}} \bigg( \Delta r +\frac{1}{\varrho}\langle \nabla \varrho, \partial_r\rangle\bigg)-nH\\
& & = \partial_r\bigg(\frac{u_+'(r)}{(\varrho^{-2}_+(r)+{u'}^2_+(r))^{1/2}}\frac{{(\varrho^{-2}_+(r)+{u'}^2_+(r))^{1/2}}}{(\varrho^{-2}(x)+{u'}^2_+(r))^{1/2}}\bigg)\\
& & \qquad + \frac{u_+'(r)}{(\varrho^{-2}(x)+{u'}^2_+(r))^{1/2}} \bigg( \Delta r +\frac{1}{\varrho}\langle \nabla \varrho, \partial_r\rangle\bigg)-nH\\
& & = \frac{{(\varrho^{-2}_+(r)+{u'}^2_+(r))^{1/2}}}{(\varrho^{-2}(x)+{u'}^2_+(r))^{1/2}} \bigg[\frac{u_+'(r)}{(\varrho^{-2}_+(r)+{u'}^2_+(r))^{1/2}}\bigg(\Delta r +\frac{1}{\varrho}\langle \nabla \varrho, \partial_r\rangle\bigg)\\
& & \qquad +\partial_r\bigg(\frac{u_+'(r)}{(\varrho^{-2}_+(r)+{u'}^2_+(r))^{1/2}} \bigg)\bigg]
\\
& &\qquad 
+ \frac{u_+'(r)}{(\varrho^{-2}_+(r)+{u'}^2_+(r))^{1/2}} \partial_r \bigg(\frac{{(\varrho^{-2}_+(r)+{u'}^2_+(r))^{1/2}}}{(\varrho^{-2}(x)+{u'}^2_+(r))^{1/2}}\bigg)-nH.
\end{eqnarray*}
Moreover, suppose that
	\begin{equation}\label{rho+assumption}
	\frac{\p_r\varrho(x)}{\varrho(x)} \ge 
	\frac{\varrho_+'\big(r(x)\big)}{\varrho_+\big(r(x)\big)}
	\end{equation}
for some 
positive and increasing $C^1$-function $\varrho_+\colon [0,\infty)\to (0,\infty)$ such that $\varrho_+(0)
=\varrho(o)$. By our choice of $u_+$, 
\[
u_+^\prime(r)= -\frac{\int_0^r n\widetilde H(s) \varrho_+(s) \xi_+^{n-1}(s)\, {\rm d}s }{\varrho_+(r)\sqrt{\varrho_+^2(r)\xi_+^{2(n-1)}(r)-\big(\int_0^r n\widetilde H(s) \varrho_+(s) \xi_+^{n-1}(s)\, {\rm d}s\big)^2}},
\]
and therefore
\begin{align*}
-n\widetilde H=\bigg(\frac{u'_+(r)}{(\varrho^{-2}_+(r)+u'^2_+(r))^{1/2}} \bigg)' + \frac{u'_+(r)}{(\varrho^{-2}_+(r)+u'^2_+(r))^{1/2}}  \bigg(\frac{\varrho'_+(r)}{\varrho_+(r)} + (n-1)\frac{\xi'_+(r)}{\xi_+(r)}\bigg).
\end{align*}
Hence we obtain 
\begin{align*}
& \mathcal{Q}[u_+] 
 = \frac{{(\varrho^{-2}_+(r)+{u'}^2_+(r))^{1/2}}}{(\varrho^{-2}(x)+{u'}^2_+(r))^{1/2}} \bigg[\frac{u_+'(r)}{(\varrho^{-2}_+(r)+{u'}^2_+(r))^{1/2}}\bigg(\Delta r +\frac{1}{\varrho}\langle \nabla \varrho, \partial_r\rangle\bigg)   \\ &\,\, +\partial_r\bigg(\frac{u_+'(r)}{(\varrho^{-2}_+(r)+{u'}^2_+(r))^{1/2}} \bigg)\bigg]
 +\frac{u_+'(r)}{(\varrho^{-2}_+(r)+{u'}^2_+(r))^{1/2}} \partial_r \bigg(\frac{{(\varrho^{-2}_+(r)+{u'}^2_+(r))^{1/2}}}{(\varrho^{-2}(x)+{u'}^2_+(r))^{1/2}}\bigg) - nH
\\
& \, \, \le - \frac{{(\varrho^{-2}_+(r)+{u'}^2_+(r))^{1/2}}}{(\varrho^{-2}(x)+{u'}^2_+(r))^{1/2}} 
n\widetilde H + 	\frac{u_+'(r)}{(\varrho^{-2}_+(r)+{u'}^2_+(r))^{1/2}} 
\partial_r \bigg(\frac{{(\varrho^{-2}_+(r)+{u'}^2_+(r))^{1/2}}}{(\varrho^{-2}(x)+{u'}^2_+(r))^{1/2}}\bigg) -nH.
\end{align*}
In order to prove that $u_+$ is indeed an upper barrier we next check that 
\begin{equation}
\partial_r \bigg(\frac{{(\varrho^{-2}_+(r)+{u'}^2_+(r))^{1/2}}}{(\varrho^{-2}(x)+{u'}^2_+(r))^{1/2}}\bigg)\ge 0.
\end{equation}
Note that $u_+'\le0$. We observe that
\[
\frac{\p}{\p r} \left(\sqrt{ \frac{\varrho_+(r)^{-2} + (u_+'(r))^2}{\varrho(x)^{-2} + (u_+'(r))^2} }\right)\ge 0
\]
if and only if
	\begin{equation}\label{derivativeestimate}
	(\varrho_+^{-2} + (u_+')^2)\Big(\frac{\p_r \varrho}{\varrho^{3}} - u_+' u_+''\Big) \ge
	(\varrho^{-2} +(u_+')^2)\Big(\frac{\varrho_+'}{\varrho_+^{3}} - u_+' u_+''\Big).
	\end{equation}
But now integrating \eqref{rho+assumption} we get 
	$$
	\log \varrho(x) \ge \log \varrho_+\big(r(x)\big)
	$$
which implies 
	\[
	\frac{1}{\varrho(x)} \le \frac{1}{\varrho_+\big(r(x)\big)}
	\]
and furthermore assuming 
	\[
	\frac{\p_r \varrho(x)}{\varrho(x)^3} \ge \frac{\varrho_+'\big(r(x)\big)}{\varrho_+\big(r(x)\big)^3}
	\]
we see that \eqref{derivativeestimate} holds.


Therefore we are left to show that 
	\[
- nH \le \sqrt{\frac{\varrho^{-2}_+(r)+{u'}^2_+(r)}{\varrho^{-2}(x)+{u'}^2_+(r)} }
n\widetilde H .
	\]
The conditions \eqref{ratio} and \eqref{asympHreq} in our mind, we choose $\widetilde H$ as
	\begin{equation}\label{Htildechoice}
		n\widetilde H(r) = (1-\ve) \left(\frac{\varrho_+'(r)}{\varrho_+(r)} + (n-1) \frac{\xi_+'(r)}{\xi_+(r)} \right) 
		\end{equation}
with some $\ve\in(0,1)$. Note that then
	\[
	\int_0^r n\widetilde H(s) \varrho_+(s) \xi_+^{n-1}(s) \, {\rm d}s = (1-\ve) \varrho_+(r) \xi_+^{n-1}(r)
	\]
and we see that with this choice the denominator in the definition of $u_+$ stays bounded from 0.
Moreover, we have 
	\[
	u_+'(r) = - \frac{1-\ve}{\varrho_+(r)\sqrt{2\ve-\ve^2}}
	\]
and therefore $u_+$ is well defined, positive and decreasing function if
	\begin{equation}\label{rhopluscond}
	\int_1^\infty \frac{1}{\varrho_+(r)} \, {\rm d}r <\infty.
	\end{equation}
Now we can compute
\begin{align*}
\frac{{\varrho^{-2}_+(r)+{u'}^2_+(r)}}{\varrho^{-2}(x)+{u'}^2_+(r)} &= 
 \frac{{\varrho^{-2}_+(r)+\big( (1-\ve)/(\varrho_+(r)\sqrt{2\ve-\ve^2})\big)^2}}{\varrho^{-2}(x)+\big( (1-\ve)/(\varrho_+(r)\sqrt{2\ve-\ve^2})\big)^2} \\
 &= \frac{\varrho^{-2}_+(r)\big(1+ (1-\ve)^2/(2\ve-\ve^2)\big)}{\varrho^{-2}(x)+\varrho_+^{-2}(r)(1-\ve^2)/(2\ve-\ve^2)},
\end{align*}
and for example, taking $\ve=1-\sqrt{2}/2$ we have
\[
\frac{{\varrho^{-2}_+(r)+{u'}^2_+(r)}}{\varrho^{-2}(x)+{u'}^2_+(r)} = \frac{2\varrho_+^{-2}(r)}{\varrho^{-2}(x)+\varrho_+^{-2}(r)}.
\]
For the prescribed mean curvature we obtain the bound
\[
-n H(x) \le (1-\ve) \sqrt{\frac{\varrho^{-2}_+(r)\big(1+ (1-\ve)^2/(2\ve-\ve^2)\big)}{\varrho^{-2}(x)+\varrho_+^{-2}(r)(1-\ve^2)/(2\ve-\ve^2)}} \left(\frac{\varrho_+'(r)}{\varrho_+(r)} + (n-1)\frac{\xi_+'(r)}{\xi_+(r)} \right)
\]
which implies that $\mathcal{Q}[u_+]\le 0$.
Similarly, $\mathcal{Q}[-u_+]\ge 0$ if 
\[
n H(x) \le (1-\ve) \sqrt{\frac{\varrho^{-2}_+(r)\big(1+ (1-\ve)^2/(2\ve-\ve^2)\big)}{\varrho^{-2}(x)+\varrho_+^{-2}(r)(1-\ve^2)/(2\ve-\ve^2)}} \left(\frac{\varrho_+'(r)}{\varrho_+(r)} + (n-1)\frac{\xi_+'(r)}{\xi_+(r)} \right).
\]

All together, we have obtained the following. 
\begin{lem}\label{unif_height_estim}
Let $M$ be a complete Riemannian manifold with a pole $o$ and consider the warped product manifold $M\times_\varrho \R$,
where $\varrho$ satisfies
\begin{equation}\label{rho_height_assum}
	\frac{\p_r\varrho(x)}{\varrho(x)} \ge \frac{\varrho_+'\big(r(x)\big)}{\varrho_+\big(r(x)\big)},
	\quad  \frac{\p_r \varrho(x)}{\varrho(x)^3} \ge \frac{\varrho_+'\big(r(x)\big)}{\varrho_+\big(r(x)\big)^3}
	\end{equation}
for some 
positive and increasing $C^1$-function $\varrho_+\colon [0,\infty)\to (0,\infty)$ such that 
\begin{equation}\label{rho_height_assum1}
\varrho_+(0) =\varrho(o)\ \text{ and }\ 
\int_1^\infty \varrho_+(s)^{-1}\, {\rm d}s<\infty.
\end{equation}
 Furthermore, assume that the radial sectional curvatures of $M$ are
bounded from above by
	\[
	K_M(P_x) \le - \frac{\xi_+''\big(r(x)\big)}{\xi_+\big(r(x)\big)}
	\]
and that the prescribed mean curvature function satisfies
	\begin{align}\label{height_mean_assum}
	&n |H(x)| \le \\ 
	&(1-\ve) \sqrt{\frac{\varrho^{-2}_+\big(r(x)\big)\big(1+ (1-\ve)^2/(2\ve-\ve^2)\big)}{\varrho^{-2}(x)
	+\varrho_+^{-2}\big(r(x)\big)(1-\ve^2)/(2\ve-\ve^2)}} \left(\frac{\varrho_+'\big((r)\big)}{\varrho_+\big(r(x)\big)} 
	+ (n-1)\frac{\xi_+'\big(r(x)\big)}{\xi_+\big(r(x)\big)} \right) \nonumber
	\end{align}
	for some $\ve\in (0,1)$.
Then the function $u_+$ defined by \eqref{u_+definition} and \eqref{Htildechoice} satisfies $\mathcal{Q}[u_+] \le 0$
and $u_+ \ge ||\varphi||_{C^0}$ in $M$ with 
	\begin{equation}\label{u+_infty}
	u_+(r) \to ||\varphi||_{C^0} \quad \text{as } r\to\infty.
	\end{equation}
Furthermore $\mathcal{Q}[-u_+]\ge0$ and $-u_+ \le -||\varphi||_{C^0}$ in $M$.
\end{lem}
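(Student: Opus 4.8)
The plan is to verify, by organizing the computation carried out just above, that the explicit radial function $u_+$ of \eqref{u_+definition} is a supersolution with the asserted properties. First I would record the well-definedness and boundary behaviour: with the choice $n\widetilde H(r)=(1-\ve)\big(\varrho_+'(r)/\varrho_+(r)+(n-1)\xi_+'(r)/\xi_+(r)\big)$ made above, one has $\int_0^\tau n\widetilde H(s)\varrho_+(s)\xi_+^{n-1}(s)\,{\rm d}s=(1-\ve)\varrho_+(\tau)\xi_+^{n-1}(\tau)$, so the radicand in the denominator of \eqref{u_+definition} stays bounded away from $0$ and the integrand collapses to $(1-\ve)\big(\varrho_+(\tau)\sqrt{2\ve-\ve^2}\big)^{-1}\ge0$. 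By \eqref{rhopluscond} (equivalently the integrability condition in \eqref{rho_height_assum1}) the improper integral converges, hence $u_+$ is a well-defined, positive, decreasing function with $u_+'(r)=-(1-\ve)\big(\varrho_+(r)\sqrt{2\ve-\ve^2}\big)^{-1}<0$; since its tail integral is nonnegative and vanishes as $r\to\infty$, this already gives $u_+\ge\|\varphi\|_{C^0}$, the limit \eqref{u+_infty}, and $-u_+\le-\|\varphi\|_{C^0}$.

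Next I would establish the differential inequality. Plugging $u_+(x)=u_+(r(x))$ into $\mathcal{Q}$ and applying the Hessian (Laplacian) comparison theorem, the radial curvature bound $K_M(P_x)\le-\xi_+''(r)/\xi_+(r)$ yields $\Delta r(x)\ge(n-1)\xi_+'(r)/\xi_+(r)$; together with the first inequality in \eqref{rho_height_assum} this gives $\Delta r+\tfrac{1}{\varrho}\langle\nabla\varrho,\partial_r\rangle\ge(n-1)\xi_+'/\xi_+ +\varrho_+'/\varrho_+$. Inserting this and the first-order identity satisfied by $u_+$ in the model (the quadrature \eqref{flux-2}--\eqref{flux-3} from which $u_+$ was constructed) into the computation above produces
\[
\mathcal{Q}[u_+]\le-\sqrt{\tfrac{\varrho_+^{-2}(r)+u_+'^2(r)}{\varrho^{-2}(x)+u_+'^2(r)}}\,n\widetilde H+\frac{u_+'(r)}{\sqrt{\varrho_+^{-2}(r)+u_+'^2(r)}}\,\partial_r\!\sqrt{\tfrac{\varrho_+^{-2}(r)+u_+'^2(r)}{\varrho^{-2}(x)+u_+'^2(r)}}-nH .
\]
Since $u_+'\le0$ and the radicand is nondecreasing in $r$ --- this is \eqref{derivativeestimate}, which holds because integrating \eqref{rho+assumption} gives $\varrho(x)\ge\varrho_+(r(x))$ while the second inequality in \eqref{rho_height_assum} gives $\partial_r\varrho/\varrho^3\ge\varrho_+'/\varrho_+^3$ --- the middle term is $\le0$, so $\mathcal{Q}[u_+]\le-\sqrt{\cdots}\,n\widetilde H-nH$.

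It then remains to absorb $H$: I would substitute the explicit value of $u_+'$, simplify $(\varrho_+^{-2}+u_+'^2)/(\varrho^{-2}+u_+'^2)$ as above, and observe that the inequality $-nH(x)\le\sqrt{(\varrho_+^{-2}+u_+'^2)/(\varrho^{-2}+u_+'^2)}\,n\widetilde H$ needed for $\mathcal{Q}[u_+]\le0$ is precisely the hypothesis \eqref{height_mean_assum} (which bounds $n|H|$, hence also $-nH$). The statements for $-u_+$ follow by rerunning the same computation with $-u_+$ in place of $u_+$: now $-u_+$ is increasing, the first-order term changes sign, the middle term becomes $\ge0$, and one obtains $\mathcal{Q}[-u_+]\ge\sqrt{\cdots}\,n\widetilde H-nH\ge0$, again by \eqref{height_mean_assum} applied to $nH$.

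The hard part is not any single step but the safe handling of two points: invoking the Hessian comparison theorem correctly to convert the radial upper curvature bound into the lower Laplacian bound $\Delta r\ge(n-1)\xi_+'/\xi_+$, and the monotonicity in $r$ of the quotient $\sqrt{(\varrho_+^{-2}+u_+'^2)/(\varrho^{-2}+u_+'^2)}$, i.e.\ inequality \eqref{derivativeestimate} --- this is the one place the less transparent second condition $\partial_r\varrho/\varrho^3\ge\varrho_+'/\varrho_+^3$ genuinely enters, and it has to be combined with the integrated first condition $\varrho\ge\varrho_+$. Everything else is the bookkeeping already displayed preceding the statement.
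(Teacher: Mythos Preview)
Your proposal is correct and follows essentially the same route as the paper: the argument preceding the lemma \emph{is} the paper's proof, and you have faithfully organized it --- well-definedness and asymptotics from the explicit formula for $u_+'$, the key intermediate inequality for $\mathcal{Q}[u_+]$ via the model first-order identity, the sign of the middle term from \eqref{derivativeestimate}, and the absorption of $H$ by \eqref{height_mean_assum}. The only point you make more explicit than the paper does is the Laplacian comparison step $\Delta r\ge(n-1)\xi_+'/\xi_+$ (the paper silently uses it, together with $u_+'\le0$, when passing from the exact expression for $\mathcal{Q}[u_+]$ to the displayed inequality ``Hence we obtain~$\ldots$''); your identification of this and of \eqref{derivativeestimate} as the two substantive ingredients is accurate.
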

\begin{rem}
In particular, if the sectional curvatures of a Cartan-Hadamard manifold $M$ are bounded from above as
\begin{equation}\label{Kleq-a}
K_M(P_x)\le - a\big(r(x)\big)^2
\end{equation}
for some smooth function $a\colon [0,\infty)\to [0,\infty)$, the condition \eqref{height_mean_assum} reads as
\begin{align}\label{height_mean_assum2}
	&n |H(x)| \le \\ 
	&(1-\ve) \sqrt{\frac{\varrho^{-2}_+\big(r(x)\big)\big(1+ (1-\ve)^2/(2\ve-\ve^2)\big)}{\varrho^{-2}(x)
	+\varrho_+^{-2}\big(r(x)\big)(1-\ve^2)/(2\ve-\ve^2)}} \left(\frac{\varrho_+'\big((r)\big)}{\varrho_+\big(r(x)\big)} 
	+ (n-1)\frac{f_a'\big(r(x)\big)}{f_a\big(r(x)\big)} \right), \nonumber
	\end{align}
	with $f_a$ as in \eqref{jac-equ}.
	\end{rem}
In a rotationally symmetric case if $\varrho=\varrho_+(r)$ (and \eqref{rhopluscond} holds), we see that the bound for the 
mean curvature is 
\begin{align*}
	n |H(x)| \le(1-\ve)  
	\left(\frac{\varrho_+'\big(r(x)\big)}{\varrho_+\big(r(x)\big)} 
	+ (n-1)\frac{\xi_+'\big((r(x\big)}{\xi_+\big(r(x)\big)} \right).
	\end{align*}

\subsection{Example: hyperbolic space}

We consider the warped model of $\mathbb{H}^{n+1}$ given by $\mathbb{H}^n\times_{\cosh r} \mathbb{R}$, where $r$ is a radial coordinate in $\mathbb{H}^n$ defined with respect to a fixed reference point $o\in\mathbb{H}^n$. Then the hyperbolic metric is expressed as
\[
\cosh^2 {\rm d}t^2 + {\rm d}r^2 + \sinh^2 r \, {\rm d}\vartheta^2,
\]
where ${\rm d}\vartheta^2$ stands for the standard metric in $\mathbb{S}^{n-1}\subset T_o \mathbb{H}^n$. The flow of the Killing field $X=\partial_t$ is given by the hyperbolic translations generated by a geodesic $\gamma$ orthogonal to $\mathbb{H}^n$ through $o$. Since $\varrho(r) = \cosh r$ and $\xi(r) = \sinh r$ in this case, we obtain
\begin{eqnarray*}
& & \lim_{r\to\infty}\frac{\varrho(r) \xi^{n-1}(r)}{\int_0^r \varrho(\tau) \xi^{n-1}(\tau) \, {\rm d}\tau} = \lim_{r\to\infty}\frac{\sinh^n  r  + (n-1)\cosh^2 r \sinh^{n-2}r }{\cosh r \sinh^{n-1}r} \\
& &\,\, = \lim_{r\to\infty}\left(\frac{\sinh r}{\cosh r} + (n-1)\frac{\cosh r}{\sinh r} \right)\ge n.
\end{eqnarray*}
Therefore a natural bound to the mean curvature function according (\ref{ratio}) is 
\[
|H|<1,
\]
that is, below the mean curvature of horospheres. 

We also have for $|H|<1$
\begin{eqnarray*}
& & \frac{I^2(r)\varrho^{-2}(r) }{\varrho^2(r)\xi^{2(n-1)}(r)-I^2(r)} \le \frac{\sinh^{2n} r\cosh^{-2}(r) }{\cosh^2 r \sinh^{2(n-1)} r-\sinh^{2n} r}= \frac{\sinh^2 r}{\cosh^2 r}.
\end{eqnarray*}
Therefore we have
\[
u'^2 (r) \le 1.
\]
If $|H|={\rm cte.}<1$ we have an explicit expression
\[
u'^2(r)  = \frac{H^2}{\cosh^2 r - H^2 \sinh^2 r} \frac{\cosh^2 r}{\sinh^2 r}.
\]


\subsection{Global barrier $V$}
In this subsection we construct a global barrier using an idea of Mastrolia, Monticelli, and Punzo \cite{MMP}; see also \cite{CHH2}.
Recall that
$\varrho_+\colon [0,\infty)\to (0,\infty)$ is an increasing smooth 
function satisfying $\varrho_+(0)=\varrho(o)$ and
\begin{equation}\label{rho+assumption2}
	\frac{\p_r\varrho(x)}{\varrho(x)} \ge \frac{\varrho_+'\big(r(x)\big)}{\varrho_+\big(r(x)\big)}
\end{equation} for all $x\in M$.
 Then we have an estimate
	\begin{equation}\label{laplacecomparison2}
	\Delta_{-\log \varrho} r (x) \ge (n-1) \frac{f_a'(r(x))}{f_a(r(x))} + \frac{\varrho_+'(r(x))}{\varrho_+(r(x))}
	\end{equation}
	for the weighted Laplacian of the distance function $r$.
Let $a_0$ be a positive function such that 
\begin{equation}\label{a0cond}
\int_0^{\infty} \left( \int_t^\infty \frac{{\rm d}s}{\varrho_+^2(s)f_a^{n-1}(s)}\right) a_0(t) 
 f_a^{n-1}(t){\rm d}t <\infty.
 \end{equation}
We define 
  \begin{equation} \label{Vdefinition}\begin{split}
      V(x)&= \left(\int_{r (x)}^\infty  \frac{{\rm d}s}{\varrho_+^2(s)f_a^{n-1}(s)}\right) \left(\int_0^{r(x)} a_0(t)
      f_a^{n-1}(t){\rm d}t \right) \\ &\quad - \int_0^{r(x)} \left( \int_t^\infty  \frac{{\rm d}s}{\varrho_+^2(s) f_a^{n-1}(s)}\right) a_0(t) 
  f_a^{n-1}(t){\rm d}t - D + ||\varphi||_\infty,\end{split}
    \end{equation}
 where $D$ is the constant given by \eqref{Ddefi}.
Denoting $V(r)=V(r(x))$, we observe that 
  \begin{align*}
      V'(r) &= - \frac{1}{\varrho_+^2(r)f_a^{n-1}(r)} \int_0^r a_0(t)   f_a^{n-1}(t){\rm d}t < 0\\
      \noalign{and}  
      V''(r)&=\frac{1}{\varrho_+^2(r)f_a^{n-1}(r)}\left(\frac{(n-1)f_a^\prime(r)}{f_a (r)} + \frac{2\varrho_+^\prime(r)}{\varrho_+(r)}\right) \int_0^r a_0(t)f_a^{n-1}(t){\rm d}t   -\frac{a_0(r)}{\varrho_+^2(r)}.
       \end{align*}
Since $V'(r)<0$, the limit
  \begin{equation}\label{Ddefi}\begin{split}
	D &=\lim_{r\to\infty} \Big\{ \int_{r}^\infty \frac{{\rm d}s}{\varrho_+^2(s)f_a^{n-1}(s)} \int_0^{r} a_0(t)
      f_a^{n-1}(t){\rm d}t\\
      &\quad  - \int_0^{r}  \int_t^\infty \frac{{\rm d}s}{\varrho_+^2(s)f_a^{n-1}(s)} a_0(t)
      f_a^{n-1}(t){\rm d}t \Big\} \end{split}
      \end{equation}      
      exists. Furthermore, $D\le 0$ (see \cite[(4.5)]{MMP}) and finite by \eqref{a0cond} and therefore $V$ is well defined. 
Next we write 
\begin{align}\label{Qfraction}
     &\mathcal{Q}[V]\\
     & = \frac{(\varrho^{-2}+|\nabla V|^2) \Delta_{-\log \varrho} V - (\varrho^{-2}+|\nabla V|^2)^{3/2}nH(x) - \frac{1}{2}
  \ang{\nabla (\varrho^{-2} +|\nabla V|^2), \nabla V}}{(\varrho^{-2}+|\nabla V|^2)^{3/2}}\nonumber
   \end{align}
   and aim to prove that $\mathcal{Q}[V]\le 0$. First we estimate the weighted
 Laplacian of $V$ by using \eqref{laplacecomparison2}
\begin{align*}
     \Delta_{-\log \varrho} V &= V''(r) +V'(r)  \Delta_{-\log \varrho} r \\ 
  &   \le V''(r) +\left( (n-1)\frac{f_a^\prime(r)}{f_a(r)} + \dfrac{\varrho_+'(r)}{\varrho_+(r) } \right)V'(r) \\
      &=\frac{1 }{\varrho_+^2(r)f_a^{n-1}(r)}\left(\frac{(n-1)f_a^\prime(r)}{f_a (r)} +\frac{2\varrho_+^\prime(r)}{\varrho_+(r)}\right)  \int_0^r a_0(t) f_a^{n-1}(t){\rm d}t\\
& - \frac{a_0(r)}{\varrho_+^2(r)} 
       -  \frac{1}{\varrho_+^2(r)f_a^{n-1}(r)}\left(\frac{(n-1)f_a^\prime(r)}{f_a (r)} +\frac{\varrho_+^\prime (r)}{\varrho_+(r)}  \right) 
      \int_0^r a_0(t) f_a^{n-1}(t){\rm d}t \\
      &= - \frac{a_0(r)}{\varrho_+^2(r)}+\frac{\varrho_+^\prime(r)}
      {\varrho_+^3(r)f_a^{n-1}(r)}\int_0^r a_0(t) f_a^{n-1}(t){\rm d}t\\
   &= - \frac{a_0(r)}{\varrho_+^2(r)}-\frac{\varrho_+^\prime(r)}{\varrho_+(r)}V^\prime (r), 
    \end{align*}
  and thus the first term of \eqref{Qfraction} can be estimated as
\[
     \big(\varrho^{-2} + |\nabla V|^2\big) \Delta_{-\log \varrho} V 
     \le -\left(\varrho^{-2} + (V^\prime(r))^2\right) 
 \left( \frac{a_0(r)}{\varrho_+^2(r)}+\frac{\varrho_+^\prime(r)}
      {\varrho_+(r)}V^\prime (r)
  \right).    
\]     
Then, for the last term of \eqref{Qfraction} we have 
 \begin{align*}
    -\frac{1}{2}&\ang{\nabla (\varrho^{-2}+|\nabla V|^2),\nabla V} 
  = -\big(V'(r)\big)^2 V''(r) +\frac{\partial_r\varrho}{\varrho^3} V'(r)\\
  &=- (V'(r))^2 \left(\left(\frac{(n-1)f_a^\prime(r)}{f_a (r)} + \frac{2\varrho_+^\prime(r)}{\varrho_+(r)}\right)V^\prime (r)   -\frac{a_0(r)}{\varrho_+^2(r)}\right)
+ \frac{\partial_r\varrho}{\varrho^3} V^\prime (r).
   \end{align*}
   Hence
   \begin{align*}
  \varrho_+^2(r)&\big(\varrho^{-2} + |\nabla V|^2\big) \Delta_{-\log \varrho} V     -\frac{1}{2}\varrho_+^2(r)\ang{\nabla (\varrho^{-2}+|\nabla V|^2),\nabla V} \\
&\le -\varrho^{-2}a_0(r)-\varrho^{-2}\varrho_+^2(r)\left(\frac{\varrho_+^\prime (r)}{\varrho_+(r)}-\frac{\partial_r\varrho}{\varrho}\right)V^\prime (r)\\
&\quad -\varrho_+^2(r)\big(V^\prime (r)\big)^3\left(\frac{(n-1)f_a^\prime(r)}{f_a(r)} +\frac{\varrho_+^\prime(r)}{\varrho_+(r)}\right)\\
&\le  -\varrho^{-2}a_0(r) -\varrho_+^2(r)\big(V^\prime (r)\big)^3 \left(\frac{(n-1)f_a^\prime(r)}{f_a(r)} +\frac{\varrho_+^\prime(r)}{\varrho_+(r)}\right).
   \end{align*}
Finally, if the prescribed mean curvature function satisfies
\[
-nH \le \frac{\varrho^{-2}\varrho_+^{-2}(r)a_0(r) 
+ \big(-V^\prime(r)\big)^3\left(\frac{(n-1)f_a^\prime(r)}{f_a (r)} + \frac{\varrho_+^\prime(r)}{\varrho_+(r)}\right)}{\left(\varrho^{-2} +
\big(V^\prime(r)\big)^2\right)^{3/2} }
\]
in $M$, we obtain $\mathcal{Q}[V]\le 0$ as desired. Similarly, we see that $\mathcal{Q}[-V]\ge 0$ if 
\[
nH \le \frac{\varrho^{-2}\varrho_+^{-2}(r)a_0(r) 
+ \big(-V^\prime(r)\big)^3\left(\frac{(n-1)f_a^\prime(r)}{f_a (r)} + \frac{\varrho_+^\prime(r)}{\varrho_+(r)}\right)}{\left(\varrho^{-2} +
\big(V^\prime(r)\big)^2\right)^{3/2} }.
\]

Hence we have proved the following uniform height estimate.
\begin{lem}\label{unif-heigh2}
Let $\varphi\colon M\to \R$ be a bounded function and assume that the prescribed mean curvature function $H$ and the function $V$ defined in  \eqref{Vdefinition} satisfy
\begin{equation}\label{HVbound}
n|H| \le \frac{\varrho^{-2}\varrho_+^{-2}(r)a_0(r) 
+ \big(-V^\prime(r)\big)^3\left(\frac{(n-1)f_a^\prime(r)}{f_a (r)} + \frac{\varrho_+^\prime(r)}{\varrho_+(r)}\right)}{\left(\varrho^{-2} +
\big(V^\prime(r)\big)^2\right)^{3/2}},
\end{equation}
with some positive functions $\varrho_+$ and $a_0$ satifying 
\eqref{rho+assumption2} and \eqref{a0cond}, respectively. Then 
\begin{equation}\label{Vsuper}
	\mathcal{Q}[V] = \dv_{-\log\varrho} \frac{\nabla V}{\sqrt{\varrho^{-2} + |\nabla V|^2}} -nH \le 0\quad\text{in }M,
      \end{equation}
 \begin{equation}\label{Vheight}
       V(x) > ||\varphi||_\infty \quad \text{for all } x\in M,
      \end{equation}
and
      \begin{equation}\label{Vlimes}
       \lim_{r(x)\to\infty} V(x) = ||\varphi||_\infty.
      \end{equation}
Furthermore, $\mathcal{Q}[-V] \ge 0$ in $M$. 
\end{lem}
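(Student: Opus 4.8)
The plan is to read off \eqref{Vsuper} and the companion inequality $\mathcal{Q}[-V]\ge 0$ directly from the computation carried out just above, and then to verify separately that $V$ is well defined and satisfies the height statements \eqref{Vheight} and \eqref{Vlimes}. For the first part I would organize the computation in the order already indicated: differentiate \eqref{Vdefinition} to get the explicit expressions for $V'(r)$ and $V''(r)$, noting that $V'(r)<0$ for $r>0$ since $a_0>0$; rewrite $\mathcal{Q}[V]$ in the divergence form \eqref{Qfraction}, whose numerator is the sum of the weighted Laplacian term $(\varrho^{-2}+|\nabla V|^2)\Delta_{-\log\varrho}V$, the curvature term $-(\varrho^{-2}+|\nabla V|^2)^{3/2}nH$, and the gradient term $-\tfrac12\ang{\nabla(\varrho^{-2}+|\nabla V|^2),\nabla V}$; use the Laplacian comparison \eqref{laplacecomparison2} together with the sign $V'<0$ to bound $\Delta_{-\log\varrho}V$ from above by $-a_0/\varrho_+^2-(\varrho_+'/\varrho_+)V'$; and substitute this bound and the formula for $V''$ into the numerator. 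After the substitution the terms containing $\tfrac{(n-1)f_a'}{f_a}$ cancel, and I expect to be left with an upper bound of the shape $-\varrho^{-2}a_0-\varrho^{-2}\varrho_+^2\big(\tfrac{\varrho_+'}{\varrho_+}-\tfrac{\partial_r\varrho}{\varrho}\big)V'-\varrho_+^2(V')^3\big(\tfrac{(n-1)f_a'}{f_a}+\tfrac{\varrho_+'}{\varrho_+}\big)$. By \eqref{rho+assumption2} the factor $\tfrac{\varrho_+'}{\varrho_+}-\tfrac{\partial_r\varrho}{\varrho}$ is nonpositive and, since $V'<0$, the corresponding cross term is $\le 0$ and may be discarded; what remains is exactly $\le 0$ under hypothesis \eqref{HVbound}, which gives \eqref{Vsuper}. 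Repeating the same computation with $-V$ in place of $V$ and using the lower bound on $nH$ contained in \eqref{HVbound} yields $\mathcal{Q}[-V]\ge 0$.

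Next I would establish well-definedness of $V$ and the bounds \eqref{Vheight}, \eqref{Vlimes}. Put
\[
g(r)=\Big(\int_r^\infty\frac{{\rm d}s}{\varrho_+^2(s)f_a^{n-1}(s)}\Big)\Big(\int_0^r a_0(t)f_a^{n-1}(t)\,{\rm d}t\Big)-\int_0^r\Big(\int_t^\infty\frac{{\rm d}s}{\varrho_+^2(s)f_a^{n-1}(s)}\Big)a_0(t)f_a^{n-1}(t)\,{\rm d}t,
\]
so that $V(x)=g(r(x))-D+||\varphi||_\infty$ with $D$ as in \eqref{Ddefi}. A direct differentiation gives $g'(r)=V'(r)=-\varrho_+^{-2}(r)f_a^{-(n-1)}(r)\int_0^r a_0 f_a^{n-1}<0$ for $r>0$, while $g(0)=0$; hence $g$ is strictly decreasing, the limit $D=\lim_{r\to\infty}g(r)=\int_0^\infty g'(s)\,{\rm d}s$ exists, and by Tonelli's theorem it equals $-\int_0^\infty a_0(t)f_a^{n-1}(t)\Big(\int_t^\infty\varrho_+^{-2}(s)f_a^{-(n-1)}(s)\,{\rm d}s\Big)\,{\rm d}t$, which is finite and strictly negative by \eqref{a0cond} and $a_0>0$. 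Thus $V$ is well defined. Since $g$ decreases strictly to its infimum $D$, we have $g(r)>D$ for every $r\ge 0$, so $V(x)=g(r(x))-D+||\varphi||_\infty>||\varphi||_\infty$, which is \eqref{Vheight}; and \eqref{Vlimes} follows immediately from $g(r)\to D$ as $r\to\infty$.

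The part I expect to be the main obstacle is the sign bookkeeping in the numerator of \eqref{Qfraction}: one must track the cancellation of the terms in $\tfrac{(n-1)f_a'}{f_a}$ carefully and verify that the residual cross term $\big(\tfrac{\varrho_+'}{\varrho_+}-\tfrac{\partial_r\varrho}{\varrho}\big)V'$ has the favourable sign, which is exactly what \eqref{rho+assumption2} together with $V'<0$ provides, so that it can be dropped before comparing with \eqref{HVbound}. Everything else is routine substitution.
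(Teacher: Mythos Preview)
Your proposal is correct and follows essentially the same route as the paper's own argument: both compute $V'$ and $V''$ explicitly, estimate $\Delta_{-\log\varrho}V$ via the comparison \eqref{laplacecomparison2}, insert into the numerator of \eqref{Qfraction}, discard the cross term $\big(\tfrac{\varrho_+'}{\varrho_+}-\tfrac{\partial_r\varrho}{\varrho}\big)V'$ using \eqref{rho+assumption2} and $V'<0$, and then read off \eqref{Vsuper} and $\mathcal{Q}[-V]\ge 0$ from \eqref{HVbound}. Your treatment of \eqref{Vheight}--\eqref{Vlimes} via Tonelli is slightly more self-contained than the paper's, which simply cites \cite[(4.5)]{MMP} for $D\le 0$, but the content is the same.
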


Next we discuss possible choices of the functions $\varrho_+$ and $a_0$ and their influence on the bound of $|H|$.
Notice that the right hand side of \eqref{HVbound} can be written as
\begin{equation}\label{HVound2}
\dfrac{\frac{\varrho\varrho_+^{-2}(r)a_0(r)}{\big(-V^\prime(r)\varrho\big)^3}+\frac{(n-1)f_a^\prime(r)}{f_a (r)} + \frac{\varrho_+^\prime(r)}{\varrho_+(r)}}{\big(1+\big(-V^\prime(r)\varrho\big)^{-2}\big)^{3/2}}.
\end{equation}
Hence if we can choose the comparison manifold 
$M_{-a^2(r)}\times_{\varrho_+} \R$ and $a_0$ such that 
$V^\prime(r)\varrho\to -\infty$ and
\[
\frac{\varrho\varrho_+^{-2}(r)a_0(r)}{\big(-V^\prime(r)\varrho\big)^3}
\to 0
\]
as $r\to\infty$, we obtain 
\begin{equation}\label{asbeh}
n|H|\le \frac{(n-1)f_a^\prime(r)}{f_a (r)} + \frac{\varrho_+^\prime(r)}{\varrho_+(r)}
\end{equation}
asymptotically as $r\to\infty$.

\begin{exa}
In the hyperbolic case $\HH^{n+1}=\HH^n \times_{\cosh r}\R$ we may take $\varrho_+(r)=\varrho=\cosh$. Choosing $a_0(r)=\sinh^\alpha r$ for some $\alpha\in (1,2)$ yields to the natural asymptotic bound 
$|H|<1$ as $r\to\infty$.
\end{exa}
\begin{exa}
More generally, if $N=M\times_\varrho\R$, where the sectional curvatures of $M$ have a negative upper bound $-k^2$ and if the warping function $\varrho$ satisfies \eqref{rho+assumption2} with 
$\varrho_+(r)\ge c_1e^{\alpha r}$ for some $\alpha>0$, then $f_a(r)\approx e^{kr}$ and  \eqref{a0cond} holds if
\[
\int_0^\infty a_0(t)e^{-2\alpha t}{\rm d}t<\infty.
\]
Moreover, if $\varrho_+(r)\le c_2e^{\beta r}$ for some $0<\beta<2\alpha$, then by choosing $a_0(t)=e^{\kappa t},\ \beta<\kappa<2\alpha$, we get
\eqref{asbeh} asymptotically as $r\to\infty$.
\end{exa}
\begin{exa}
If $N=M\times_\varrho\R$, where the sectional curvatures of $M$ have a negative upper bound 
\[
K(P_x)\le -\frac{\phi(\phi-1)}{r(x)^2},\quad \phi>1,
\]
and if the warping function $\varrho$ satisfies \eqref{rho+assumption2} with 
$\varrho_+(r)= c r^{\alpha},\ \alpha>1$, then $f_a(r)\approx r^{\phi}$ and  \eqref{a0cond} holds if
\[
\int_0^\infty a_0(r)r^{-2\alpha+1}{\rm d}r<\infty.
\]
Choosing $a_0(r)=r^\kappa$, for some $\alpha-1<\kappa<2(\alpha-1)$, we get \eqref{asbeh} asymptotically as $r\to\infty$.
\end{exa}

\section{Barrier at infinity}\label{sec_bar_infty}

In this section we assume that $M$ is a Cartan-Hadamard manifold of dimension $n\ge 2$, $\partial_{\infty}M$ is the asymptotic boundary of $M$, 
and $\bar{M}=M\cup\partial_{\infty}M$ the compactification of $M$ in the cone topology.
Recall that the asymptotic boundary is defined as the set of all equivalence classes of unit speed
geodesic rays in $M$; two such rays $\gamma_{1}$ and $\gamma_{2}$ are
equivalent if $\sup_{t\ge0}d\bigl(\gamma_{1}(t),\gamma_{2}(t)\bigr)< \infty$. The equivalence class of $\gamma$ is denoted by $\gamma(\infty)$.
For each $x\in M$ and $y\in\bar{M}\setminus\{x\}$ there exists a unique unit
speed geodesic $\gamma^{x,y}\colon\mathbb{R}\to M$ such that $\gamma^{x,y}%
_{0}=x$ and $\gamma^{x,y}_{t}=y$ for some $t\in(0,\infty]$. If $v\in
T_{x}M\setminus\{0\}$, $\alpha>0$, and $r>0$, we define a cone
\[
C(v,\alpha)=\{y\in\bar M\setminus\{x\}:\sphericalangle(v,\dot\gamma^{x,y}
_{0})<\alpha\}
\]
and a truncated cone
\[
T(v,\alpha,r)=C(v,\alpha)\setminus\bar B(x,r),
\]
where $\sphericalangle(v,\dot\gamma^{x,y}_{0})$ is the angle between vectors
$v$ and $\dot\gamma^{x,y}_{0}$ in $T_{x} M$. All cones and open balls in $M$
form a basis for the cone topology on $\bar M$.

Throughout this section, we assume that the sectional curvatures of $M$ are bounded from below and above
by
    \begin{equation}
    \label{curv-bound-gen}
     -(b\circ r)^2(x) \le K(P_x) \le -(a\circ r)^2 (x)
    \end{equation}
for all $x\in M$, 
where $r (x) = d(o,x)$ is the distance to a fixed point $o\in M$ and $P_x$ is any 2-dimensional subspace of $T_xM$. The functions
$a,b\colon [0,\infty) \to [0,\infty)$ are assumed to be smooth such that $a(t)=0$ and $b(t)$ is constant for $t\in [0,T_0]$ for some $T_0>0$, and that assumptions \eqref{A1}--\eqref{A7} hold. 
These curvature bounds are needed to control the first two derivatives of ``barrier'' functions that we will construct in the next subsection. 
We assume that function $b$ in \eqref{curv-bound-gen} is monotonic and that there exist positive constants 
$T_1\ge T_0, C_1, C_2, C_3$, and $Q\in (0,1)$ such that 
  \begin{align}
    \tag{A1}\label{A1}
    a(t)\begin{cases}=C_1t^{-1}&\text{if $b$ is decreasing,}\\ 
    \ge C_1t^{-1}&\text{if $b$ is increasing}\\ \end{cases} 
  \end{align}
for all $t\ge T_1$ and
  \begin{align}
    \tag{A2}\label{A2}
    a(t)&\le C_2, \\
    \tag{A3}\label{A3}
    b(t+1)&\le C_2b(t), \\
    \tag{A4}\label{A4}
    b(t/2)&\le C_2b(t), \\
    \tag{A5}\label{A5}
    b(t)&\ge C_3(1+t)^{-Q}
  \end{align}
for all $t\ge 0$.
In addition, we assume that 
  \begin{align}
    \tag{A6}\label{A6}
    &\lim_{t\to\infty}\frac{b'(t)}{b(t)^2}=0 
  \end{align}
and that there exists a constant $C_4>0$ such that
  \begin{align}
    \tag{A7}\label{A7}
    &\lim_{t\to\infty}\frac{t^{1+C_4}b(t)}{f_a'(t)}=0;
  \end{align}
  see \eqref{jac-equ} for the definition of $f_a$.

We recall from \cite{HoVa} the following two examples of functions $a$ and $b$.
\begin{exa}
\label{ex1} 
Let $C_{1}=\sqrt{\phi(\phi-1)},$ where $\phi>1$ is a constant. For $t\ge R_{0}$ let
\[
a(t)=\frac{C_{1}}{t}
\]
and
\[
b(t)=t^{\phi-2-\varepsilon/2},
\]
where $0<\varepsilon<2\phi-2$, 
and extend them to smooth functions $a\colon[0,\infty)\to (0,\infty)$ and $b\colon
[0,\infty)\to(0,\infty)$ such that they are constants in some
neighborhood of $0$, $b$ is monotonic and $b\ge a$. Then $a$ and $b$ satisfy \eqref{A1}-\eqref{A7} with constants
$T_{1}=R_{0}$, $C_{1}$, some $C_{2}>0$, some $C_{3}>0$, $Q=\max\{1/2,-\phi
+2+\varepsilon/2\}$, and any $C_{4}\in(0,\varepsilon/2)$.
It is
easy to verify that then
\[
f_{a}(t)=c_{1}t^{\phi}+c_{2}t^{1-\phi}
\]
for all $t\ge R_{0}$, where
\[
c_{1}=R_{0}^{-\phi}\frac{f_{a}(R_{0})(\phi-1)+R_{0}f_{a}^{\prime
}(R_{0})}{2\phi-1}>0,
\]
and
\[
c_{2}=R_{0}^{\phi-1}\frac{f_{a}(R_{0})\phi-R_{0}f_{a}^{\prime}(R_{0}%
)}{2\phi-1}.
\]
We then have
\[
\lim_{t\to\infty}\frac{tf_{a}^{\prime}(t)}{f_{a}(t)}=\phi
\]
and, for all $C_{4}\in(0,\varepsilon/2)$
\[
\lim_{t\to\infty}\frac{t^{1+C_{4}}b(t)}{f_{a}^{\prime}(t)}=0.
\]
It follows that $a$ and $b$ satisfy \eqref{A1}-\eqref{A7} with constants
$T_{1}=R_{0}$, $C_{1}$, some $C_{2}>0$, some $C_{3}>0$, $Q=\max\{1/2,-\phi
+2+\varepsilon/2\}$, and any $C_{4}\in(0,\varepsilon/2)$.
\end{exa}

\begin{exa}
\label{ex2} Let $k>0$ and $\varepsilon>0$ be constants and define $a(t)=k$ for
all $t\ge0$. Define
\[
b(t)=t^{-1-\varepsilon/2}e^{kt}
\]
for $t\ge R_{0}=r_{0}+1$, where $r_{0}>0$ is so large that $t\mapsto
t^{-1-\varepsilon/2}e^{kt}$ is increasing and greater than $k$ for all $t\ge
r_{0}$. Extend $b$ to an increasing smooth function $b\colon[0,\infty
)\to[k,\infty)$ that is constant in some neighborhood of $0$. 
We can
choose $C_{1}>0$ in \eqref{A1} as large as we wish. 
Then $a$ and $b$ satisfy
\eqref{A1}-\eqref{A7} with constants
 $C_{1},\ T_{1}=C_{1}/k$, some $C_{2}>0$,
some $C_{3}>0$, $Q=1/2$, and any $C_{4}\in(0,\varepsilon/2)$.
\end{exa}

\subsection{Construction of a barrier}\label{subsec_barrier_constr}

Following \cite{HoVa}, we construct a barrier function for each boundary point $x_0\in\partial_{\infty}M$. 
Towards this end let $v_{0}=\dot\gamma^{o,x_{0}}_{0}$ be the initial (unit) vector of
the geodesic ray $\gamma^{o,x_{0}}$ from a fixed point $o\in M$ and define a
function $h:\partial_{\infty}M\to\mathbb{R}$,
\begin{equation}
\label{eq:hoodef}h(x)=\min\bigl(1,L\sphericalangle(v_{0},\dot\gamma^{o,x}%
_{0})\bigr),
\end{equation}
where $L\in(8/\pi,\infty)$ is a constant. Then we define a crude extension 
$\tilde h\in C(\bar{M})$, with $\tilde h|\partial_{\infty}M=h$, by setting
\begin{equation}
\label{eq:hoodeftilde}\tilde h(x)=\min\Bigl(1,\max\bigl(2-2r
(x),L\sphericalangle(v_{0},\dot\gamma^{o,x}_{0})\bigr)\Bigr).
\end{equation}
Finally, we smooth out $\tilde{h}$ to get an extension
$h\in C^{\infty}(M)\cap C(\bar{M})$ with controlled first and second order 
derivatives. For that purpose, we fix $\chi\in C^{\infty}(\R)$ such that 
$0\le\chi\le 1$, $\supp\chi\subset[-2,2]$, and
$\chi\vert[-1,1]\equiv1$. Then for any function $\varphi\in C(M)$ we define
functions $F_{\varphi}\colon M\times M\to\mathbb{R},\ {\mathcal{R}}%
(\varphi)\colon M\to M$, and ${\mathcal{P}}(\varphi)\colon M\to\mathbb{R}$ by
\begin{align*}
F_{\varphi}(x,y)  &  =\chi\bigl(b(r(y))d(x,y)\bigr)\varphi(y),\\
{\mathcal{R}}(\varphi)(x)  &  =\int_{M}F_{\varphi}(x,y){\rm d}m(y),\ \text{ and}\\
{\mathcal{P}}(\varphi)  &  =\frac{{\mathcal{R}}(\varphi)}{{\mathcal{R}}(1)},
\end{align*}
where
\[
{\mathcal{R}}(1)(x)=\int_{M}\chi\bigl(b(r(y))d(x,y)\bigr){\rm d}m(y)>0.
\]
Thus ${\mathcal{P}}(\varphi)$ is an integral average of $\varphi$ with respect to $\chi$ similar to that in
\cite[p. 436]{andschoen} except that here the function $b$ is taken into account explicitly. 
If $\varphi\in C(\bar M)$, we extend ${\mathcal{P}}(\varphi)\colon
M\to\mathbb{R}$ to a function $\bar{M}\to\mathbb{R}$ by setting ${\mathcal{P}%
}(\varphi)(x)=\varphi(x)$ whenever $x\in M(\infty)$. Then the extended
function ${\mathcal{P}}(\varphi)$ is $C^{\infty}$-smooth in $M$ and continuous
in $\bar{M}$; see \cite[Lemma 3.13]{HoVa}. In particular, applying
${\mathcal{P}}$ to the function $\tilde{h}$ yields an appropriate smooth
extension
\begin{equation}
\label{extend_h}
h:={\mathcal{P}}(\tilde{h})
\end{equation}
of the original function $h\in C\bigl(\partial_{\infty}M\bigr)$ that was defined in \eqref{eq:hoodef}.

We denote
\[
\Omega=C(v_{0},1/L)\cap M \ \text{ and }\ \ell\Omega=C(v_{0},\ell/L)\cap M
\]
for $\ell>0$. We collect together all these constants and functions and denote
\[
C=(a,b,T_{1},C_{1},C_{2},C_{3},C_{4},Q,n,L).
\]
Furthermore, we denote by $\|\Hess_{x} u\|$ the norm of the Hessian of a
smooth function $u$ at $x$, that is
\[
\|\Hess_{x} u\|=\sup_{ \overset{ \mbox{\scriptsize$X\in T_xM$} }{\lvert X
\rvert\le1}}\lvert\Hess u(X,X) \rvert.
\]
The following lemma gives the desired estimates for derivatives of $h$.
We refer to \cite{HoVa} for the proofs of these estimates; see also \cite{CHR2}.

\begin{lem}\cite[Lemma 3.16]{HoVa}\label{arvio_lause}
There exist constants $R_1=R_1(C)$ and $c_1=c_1(C)$ such that the extended function 
$h\in C^\infty(M)\cap C(\bar M)$ in \eqref{extend_h}
satisfies 
\begin{equation}\label{arvio1}
  \begin{split}
  |\nabla h(x)|&\le c_1\frac{1}{(f_a\circ r)(x)}, \\
  \|\Hess_x h\|&\le c_1\frac{(b\circ r)(x)}{(f_a\circ r)(x)}, \\
  \end{split}
  \end{equation}
for all $x\in 3\Omega\setminus B(o,R_1)$. 
In addition,
\[h(x)=1
  \]
for every $x\in M\setminus\bigl(2\Omega\cup B(o,R_1)\bigr)$.
\end{lem}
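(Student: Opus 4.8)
This is \cite[Lemma 3.16]{HoVa}; I sketch how the estimates are obtained. The plan is to exploit that $h=\mathcal P(\tilde h)=\mathcal R(\tilde h)/\mathcal R(1)$ is an average of $\tilde h$ against the kernel $y\mapsto\chi\bigl(b(r(y))d(x,y)\bigr)$, which is concentrated at the scale $1/b(r(x))$ about $x$, and that for $r(z)>1$ the crude extension takes the simple form $\tilde h(z)=\min\bigl(1,L\sphericalangle(v_{0},\dot\gamma^{o,z}_{0})\bigr)$, so that its variation is purely angular. Comparing Jacobi fields along radial geodesics from $o$ with the solution $f_a$ of \eqref{jac-equ} --- which is legitimate since $K\le-(a\circ r)^2$ by \eqref{curv-bound-gen} --- shows that perpendicular Jacobi fields grow at least like $f_a\circ r$; hence $x\mapsto\sphericalangle(v_{0},\dot\gamma^{o,x}_{0})$ has gradient of norm $\le 1/(f_a\circ r)(x)$ and $\tilde h$ is locally Lipschitz outside $B(o,1)$ with constant $\lesssim L/(f_a\circ r)$.

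The first real step is the geometric bookkeeping forced by \eqref{A1}--\eqref{A7}. On the support of the kernel centred at $x$ one has $b(r(y))d(x,y)\le 2$; combined with the doubling bounds \eqref{A3}--\eqref{A4} this gives, for $R_1$ large and $r(x)\ge R_1$, that $b(r(y))\asymp b(r(x))$, $f_a(r(y))\asymp f_a(r(x))$ and $d(x,y)\lesssim 1/b(r(x))$ throughout the support, with constants depending only on $C$. Together with the lower bound $K\ge-(b\circ r)^2$ and volume comparison one gets $\mathcal R(1)(x)\asymp\vol B\bigl(x,1/b(r(x))\bigr)$, and, since $h(x)$ is an average of $\tilde h$ over the same support, $|\tilde h(y)-h(x)|\le\osc_{\supp}\tilde h\lesssim \dfrac{1}{b(r(x))\,f_a(r(x))}$ there.

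For the gradient bound one differentiates under the integral sign: since $\mathcal P$ fixes constants,
\[
\mathcal R(1)(x)\,\nabla h(x)=\int_M\nabla_x\bigl[\chi\bigl(b(r(y))d(x,y)\bigr)\bigr]\bigl(\tilde h(y)-h(x)\bigr)\,{\rm d}m(y),
\]
and $\bigl|\nabla_x\chi\bigl(b(r(y))d(x,y)\bigr)\bigr|\le\|\chi'\|_\infty\,b(r(y))\lesssim b(r(x))$ because $|\nabla_xd(x,y)|=1$. Multiplying the bounds for the kernel derivative and for the oscillation, integrating over the support and dividing by $\mathcal R(1)(x)\asymp\vol B(x,1/b(r(x)))$ gives $|\nabla h(x)|\lesssim 1/(f_a\circ r)(x)$. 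For the Hessian one differentiates once more; the only new term of the critical order is the one in which a derivative lands on $\nabla_xd(x,y)$ and produces $\Hess_x d(x,y)$, which by the Hessian comparison theorem applied with $K\ge-(b\circ r)^2$ at distance $d(x,y)\lesssim 1/b(r(x))$ is $\lesssim b(r(x))$; the remaining contributions (cross terms, and the corrections coming from $\Hess_x\mathcal R(1)$ and from products of first derivatives of $\mathcal R(1)$ and $\mathcal R(\tilde h)$) are of the same or lower order. This accounts for the extra factor $b(r(x))$, hence $\|\Hess_x h\|\lesssim (b\circ r)(x)/(f_a\circ r)(x)$.

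For the last assertion, fix $x\notin 2\Omega\cup B(o,R_1)$, so that $\sphericalangle(v_{0},\dot\gamma^{o,x}_{0})\ge 2/L$ and $r(x)\ge R_1$. For every $y$ in the support of the kernel at $x$, the angle comparison coming from $K\le-(a\circ r)^2$ bounds the angle subtended at $o$ by $\sphericalangle(\dot\gamma^{o,x}_{0},\dot\gamma^{o,y}_{0})\lesssim d(x,y)/f_a\bigl(\min(r(x),r(y))\bigr)\lesssim 1/\bigl(b(r(x))f_a(r(x))\bigr)$, and this tends to $0$ as $r(x)\to\infty$ by \eqref{A7}. Enlarging $R_1$ if necessary we conclude $\sphericalangle(v_{0},\dot\gamma^{o,y}_{0})\ge 1/L$ and, since $R_1>1/2$, $\tilde h(y)=1$ for every such $y$; hence $h(x)=\mathcal P(\tilde h)(x)=1$. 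The main obstacle throughout is the bookkeeping of the second paragraph: one has to choose $R_1$ and verify, using precisely the quantitative hypotheses \eqref{A1}--\eqref{A7}, that the mollification scale $1/b(r(x))$ is compatible with the curvature bounds, so that $b$, $f_a$, the kernel mass and the oscillation of $\tilde h$ are all mutually comparable over the kernel support.
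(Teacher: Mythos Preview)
The paper does not prove this lemma at all: it simply cites it as \cite[Lemma~3.16]{HoVa} and refers the reader there (and to \cite{CHR2}) for the argument. Your sketch is a faithful outline of the original proof in \cite{HoVa}: the mollification $\mathcal P(\tilde h)$ at scale $1/b(r(x))$, the Rauch-type comparison giving $|\nabla\sphericalangle|\lesssim 1/f_a(r)$ and hence the oscillation bound $\osc\tilde h\lesssim 1/(b(r)f_a(r))$ on the kernel support, differentiation under the integral with the cancellation identity $\mathcal R(1)\nabla h=\int\nabla_x\chi\cdot(\tilde h-h(x))$, and the Hessian comparison with the lower curvature bound $-(b\circ r)^2$ to control $\Hess_x d(x,y)\lesssim b(r(x))$ where $\chi'\ne 0$. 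The last assertion is likewise handled as in the source. So there is nothing to compare beyond noting that you have supplied what the paper deliberately omits.
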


Let $A>0$ be a fixed constant, and $R_3>0$ and $\delta>0$ constants that will be determined 
later, and $h$ the function defined in \eqref{extend_h}. We will show that a function
	\begin{equation}\label{psi_barr_def}
	\psi = A(R_3^\delta r^{-\delta} + h)
	\end{equation}
is a supersolution 
	\begin{align*}
	\mathcal{Q}[\psi] &= \dv_{-\log\varrho} \frac{\nabla \psi}{\sqrt{\varrho^{-2} + |\nabla \psi|^2}} -nH \\
	&= \dv \frac{\nabla \psi}{W} + \ang{\nabla \log \varrho,\frac{\nabla \psi}{W}}-nH < 0
	\end{align*}
in the $3\Omega \setminus \bar B(o,R_3)$. In the proof we shall use the following estimates obtained in 
\cite{HoVa}:
\begin{lem}\cite[Lemma 3.17]{HoVa}\label{perusta}
There exist constants $R_2=R_2(C)$ and $c_2=c_2(C)$ with the following property.
If $\delta\in(0,1)$, then
\[\begin{split}
  |\nabla h|&\le c_2/(f_a\circ r), \\
  \|\Hess h\|&\le c_2 r^{-C_4-1}(f_a'\circ r)/(f_a\circ r), \\
  |\nabla\langle\nabla h,\nabla h\rangle|&\le c_2 r^{-C_4-2}(f_a'\circ r)/(f_a\circ r), \\
  |\nabla\langle\nabla h,\nabla( r^{-\delta})\rangle|&\le c_2 r^{-C_4-2}(f_a'\circ r)/(f_a\circ r), \\
  \nabla\bigl\langle\nabla(r^{-\delta}),\nabla( r^{-\delta})\bigr\rangle
  &=-2\delta^2(\delta+1)r^{-2\delta-3}\nabla r
  \end{split}\]
in the set $3\Omega\setminus B(o,R_2)$.
\end{lem}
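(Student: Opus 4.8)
The plan is to obtain all five bounds by combining Lemma~\ref{arvio_lause} with the structural assumptions \eqref{A1}--\eqref{A7}, working throughout on $3\Omega\setminus B(o,R_2)$ with $R_2\ge R_1$ chosen large enough for the various asymptotic inequalities below to be in force. The last line is purely computational: since $\nabla(r^{-\delta})=-\delta r^{-\delta-1}\nabla r$ and $|\nabla r|\equiv 1$ away from $o$, we have $\langle\nabla(r^{-\delta}),\nabla(r^{-\delta})\rangle=\delta^2 r^{-2\delta-2}$, and differentiating this radial function gives precisely $-2\delta^2(\delta+1)r^{-2\delta-3}\nabla r$. The first two lines are essentially Lemma~\ref{arvio_lause}: the gradient bound $|\nabla h|\le c_1/(f_a\circ r)$ is the one stated there, and for the Hessian we upgrade $\|\Hess h\|\le c_1(b\circ r)/(f_a\circ r)$ by means of \eqref{A7}, which provides $b(t)\le t^{-1-C_4}f_a'(t)$ for all $t\ge R_2$; hence $\|\Hess h\|\le c_1 r^{-C_4-1}(f_a'\circ r)/(f_a\circ r)$.

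For the third line I would use the identity $\nabla\langle\nabla h,\nabla h\rangle=2\Hess h(\nabla h,\cdot)$, so that $|\nabla\langle\nabla h,\nabla h\rangle|\le 2\|\Hess h\|\,|\nabla h|\le 2c_1^2(b\circ r)/(f_a\circ r)^2$; inserting $b\le r^{-1-C_4}f_a'$ once more together with the elementary inequality $f_a(r)\ge r$ (valid since $f_a$ is convex with $f_a'(0)=1$, so $f_a'\ge 1$) turns this into $\le 2c_1^2\, r^{-C_4-2}(f_a'\circ r)/(f_a\circ r)$. For the fourth line I would write $\langle\nabla h,\nabla(r^{-\delta})\rangle=-\delta r^{-\delta-1}\langle\nabla h,\nabla r\rangle$ and differentiate by the product rule, obtaining three terms bounded respectively by $\delta(\delta+1)r^{-\delta-2}|\nabla h|$, $\delta r^{-\delta-1}\|\Hess h\|$, and $\delta r^{-\delta-1}\|\Hess r\|\,|\nabla h|$. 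The middle term is treated exactly as the Hessian bound above. For the last term I would invoke the Hessian comparison theorem: the lower curvature bound $K\ge-(b\circ r)^2$ together with the monotonicity of $b$ and \eqref{A3}, \eqref{A4}, \eqref{A6} yields $\|\Hess r\|\le c\,(b\circ r)$ for $r$ large, after which the factor $b$ is again absorbed into $r^{-1-C_4}f_a'$. The first term needs the extra power $r^{-\delta}$ to be absorbed; for this I would use that, by \eqref{A5} and \eqref{A7}, $f_a'(r)\ge c\, r^{1+C_4-Q}$ for $r$ large, and since $Q<1$ we have $C_4-\delta<1+C_4-Q$, so the surplus $r^{C_4-\delta}$ is dominated by $f_a'(r)$; the claimed bound then follows after enlarging $c_2$ and $R_2$.

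The only step where something genuinely beyond bookkeeping is required is the distance-Hessian estimate $\|\Hess r\|\le c\,(b\circ r)$ used in the fourth line: when $b$ is a non-constant radial function one cannot simply quote the constant-curvature comparison, and one must exploit the hypotheses on $b$ --- its monotonicity, the doubling-type conditions \eqref{A3}--\eqref{A4}, and the smallness \eqref{A6} of $b'/b^2$ --- to bound $f_b'/f_b$ by $c\,b$. That is precisely the role of those assumptions, and it is where I expect the main effort; everything else reduces to applying the three inequalities $f_a'\ge 1$, $f_a(r)\ge r$, and $b(r)\le r^{-1-C_4}f_a'(r)$ (for $r\ge R_2$) term by term.
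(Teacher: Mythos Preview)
The paper does not prove this lemma at all: it is quoted verbatim from \cite[Lemma~3.17]{HoVa}, so there is no in-paper argument to compare your proposal against. Your reconstruction is correct and is in fact the natural way to derive these bounds from Lemma~\ref{arvio_lause} together with \eqref{A1}--\eqref{A7}; the identification of the estimate $f_b'/f_b\le c\,b$ (equivalently $\|\Hess r\|\le c\,(b\circ r)$) as the one genuinely nontrivial ingredient, resting on \eqref{A3}, \eqref{A4}, \eqref{A6}, is exactly right and is what \cite{HoVa} establishes.
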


Let us  denote
\[
\phi=\frac{1+\sqrt{1+4C_{1}^{2}}}{2}>1,\quad\text{and}\quad\delta_{1}
=\min\left\lbrace C_{4}/2,\frac{-1+(n-1)\phi}{1+(n-1)\phi}\right\rbrace
\in(0,1),
\]
where $C_{1}$ and $C_{4}$ are constants defined in \eqref{A1} and \eqref{A7},
respectively. 

\begin{lem}\label{psi_barrier_lemma}
Assume that the prescribed mean curvature function $H$ satisfies 
	\begin{equation}\label{asym_meancurv_assum}
	\sup_{r(x)=t}n|H(x)|  <  \dfrac{C_0 t^{-\delta_1 -1}}{\sqrt{\varrho^{-2}(t)+( C_0 t^{-\delta-1})^2}}\left(  (n-1)\frac{f_a'(t)}{f_a(t)} + \frac{\p_r\varrho}{\varrho} - \dfrac{1}{t}  \right)
	\end{equation}
	for some positive constants  $C_0>1$ and $\delta<\min\{\delta_1,\phi-1\}$,  
and that the warping function $\varrho$ satisfies 
\begin{equation}\label{rho_sign_assum}
\max\left(0,-\frac{r\p_r\varrho}{\varrho}\right)
=o\left( \frac{rf_a'(r)}{f_a(r)}\right)
	\end{equation}
	and
	\begin{equation}\label{rho_rad_assum}
	|\nabla \varrho| =o\left( \frac{f_a(r)}{r^{\delta+1}}|\p_r\varrho|\right)
	\end{equation}
	as $r\to\infty$.
Then there exists a constant $R_3=R_3(C,C_0,\delta)\ge R_2$ such that the function $\psi$ defined in \eqref{psi_barr_def} satisfies $\mathcal{Q}[\psi]<0$ in the set
$3\Omega\setminus\bar B(o,R_3)$.
\end{lem}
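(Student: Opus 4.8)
The plan is to work with the non-divergence (quotient) form of $\mathcal{Q}$. With $W=\sqrt{\varrho^{-2}+|\nabla\psi|^2}$ and $\dv(\nabla\psi/W)=W^{-1}\Delta\psi-\tfrac12 W^{-3}\ang{\nabla(\varrho^{-2}+|\nabla\psi|^2),\nabla\psi}$ one rewrites
\[
W^3\,\mathcal{Q}[\psi]=(\varrho^{-2}+|\nabla\psi|^2)\,\Delta_{-\log\varrho}\psi-\tfrac12\ang{\nabla(\varrho^{-2}+|\nabla\psi|^2),\nabla\psi}-W^3\,nH ,
\]
so it suffices to make the right-hand side negative in $3\Omega\setminus\bar B(o,R_3)$; since $H$ enters \eqref{asym_meancurv_assum} only through $|H|$, it is enough to bound the first two terms from above by $-W^3n|H|$. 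I would treat $R_3^\delta r^{-\delta}$ as the leading \emph{cone part} of $\psi$ and $h$ as a lower-order perturbation, using Lemma~\ref{perusta} for the bounds on $h$ and its derivatives in $3\Omega\setminus B(o,R_2)$ (recall also $h\equiv1$ off $2\Omega\cup B(o,R_1)$, where $\psi$ is radial and the computation trivialises).

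For the cone part, Hessian comparison with the curvature upper bound $K\le-a^2$ in \eqref{curv-bound-gen} gives $\Delta r\ge(n-1)f_a'(r)/f_a(r)$, hence $\Delta_{-\log\varrho}r=\Delta r+\p_r\varrho/\varrho\ge(n-1)f_a'(r)/f_a(r)+\p_r\varrho/\varrho$, which by \eqref{rho_sign_assum} is eventually positive. Combining this with $\Delta(r^{-\delta})=-\delta r^{-\delta-1}\Delta r+\delta(\delta+1)r^{-\delta-2}$ and the identity $\nabla\ang{\nabla(r^{-\delta}),\nabla(r^{-\delta})}=-2\delta^2(\delta+1)r^{-2\delta-3}\nabla r$ of Lemma~\ref{perusta}, the cone contribution to the first two terms of the displayed identity has leading part
\[
-A\delta R_3^\delta\,r^{-\delta-1}\,(\varrho^{-2}+|\nabla\psi|^2)\Big((n-1)\frac{f_a'(r)}{f_a(r)}+\frac{\p_r\varrho}{\varrho}\Big)<0 ;
\]
the remaining lower-order pieces (the $\delta(\delta+1)r^{-\delta-2}$ Laplacian term and $-\tfrac12\ang{\nabla|\nabla\psi|^2,\nabla\psi}$) are absorbed using $\liminf_{r\to\infty}rf_a'(r)/f_a(r)\ge\phi$ (from \eqref{A1}) and $\delta<(n-1)\phi-1$, which holds because $\delta<\delta_1\le\frac{(n-1)\phi-1}{(n-1)\phi+1}$.

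Next I would verify that every error term is of strictly smaller order than the cone term as $r\to\infty$. By Lemma~\ref{perusta}, $|\nabla h|\le c_2/(f_a\circ r)$, $\|\Hess h\|\le c_2r^{-C_4-1}(f_a'\circ r)/(f_a\circ r)$, together with the stated bounds on $\nabla\ang{\nabla h,\cdot}$; using $\delta<\min\{\delta_1,\phi-1\}$, $\delta_1\le C_4/2$, and the asymptotics of $f_a$ coming from \eqref{A1} and \eqref{A7} (so that $1/f_a(r)=o(r^{-\delta-1})$ and $r^{-C_4-1}=o(r^{-\delta-1})$), every $h$-contribution is $o$ of the cone term. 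The two $\varrho$-error terms are handled by the remaining hypotheses: \eqref{rho_rad_assum} gives $|\nabla\varrho|/f_a(r)=o\big(|\p_r\varrho|\,r^{-\delta-1}\big)$, so the non-radial pieces $\ang{\nabla\varrho,\nabla h}$ occurring in $\ang{\nabla\log\varrho,\nabla\psi}$ and in $\ang{\nabla\varrho^{-2},\nabla\psi}$ are negligible, while \eqref{rho_sign_assum} gives $|\p_r\varrho|/\varrho=o(f_a'/f_a)$ wherever $\p_r\varrho<0$, so the sign-indefinite $\p_r\varrho$-terms are dominated by the $(n-1)f_a'/f_a$ part of the cone term.

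Finally I would match the surviving estimate against \eqref{asym_meancurv_assum}. Dividing by $\varrho^{-2}+|\nabla\psi|^2$ and using that $|\nabla\psi|$ is comparable to $A\delta R_3^\delta r^{-\delta-1}$, the required inequality reduces, for a suitable $c\in(0,1)$, to
\[
n|H|\,\sqrt{\varrho^{-2}+|\nabla\psi|^2}\;<\;c\,A\delta R_3^\delta\,r^{-\delta-1}\Big((n-1)\frac{f_a'(r)}{f_a(r)}+\frac{\p_r\varrho}{\varrho}-\frac1r\Big),
\]
whereas \eqref{asym_meancurv_assum} furnishes exactly this with $\sqrt{\varrho^{-2}+|\nabla\psi|^2}$ replaced by the model weight $\sqrt{\varrho^{-2}+(C_0r^{-\delta-1})^2}$ and $A\delta R_3^\delta r^{-\delta-1}$ replaced by $C_0r^{-\delta_1-1}$. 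Comparing these two weights separately in the regimes ``$\varrho^{-2}$ dominant'' and ``$r^{-2\delta-2}$ dominant'', and using $C_0>1$ together with $r^{-\delta_1-1}=o(r^{-\delta-1})$ (valid since $\delta<\delta_1$), one absorbs the discrepancy by taking $R_3$ large, which yields $W^3\mathcal{Q}[\psi]<0$, hence $\mathcal{Q}[\psi]<0$, in $3\Omega\setminus\bar B(o,R_3)$. I expect this last comparison to be the main obstacle: keeping track of all the $h$- and $\varrho$-errors and checking \emph{uniformly} in the two regimes that \eqref{asym_meancurv_assum} — whose denominator carries the model weight rather than the true $W$ — still beats $W^3n|H|$; the gap $\delta<\delta_1$, the constant $C_0>1$, and the freedom to take $R_3$ large are precisely what make this work.
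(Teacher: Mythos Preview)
Your proposal is correct and follows essentially the same route as the paper: both arguments rewrite $W^3\mathcal{Q}[\psi]$ in the displayed quotient form, estimate $\Delta_{-\log\varrho}\psi$ via Hessian comparison ($\Delta r\ge(n-1)f_a'/f_a$) together with the bounds on $h$ from Lemma~\ref{perusta}, use \eqref{rho_sign_assum}--\eqref{rho_rad_assum} to control the $\varrho$-error terms, and finish by comparing the surviving negative cone term against the $H$-term through \eqref{asym_meancurv_assum} and the strict inequality $\delta<\delta_1$. The paper carries out the final comparison by expanding $(\varrho^{-2}+|\nabla\psi|^2)^{3/2}n|H|$ into a $\varrho^{-2}r^{-\delta_1-2}$ piece and an $r^{-2\delta-\delta_1-4}$ piece and matching these against the corresponding pieces of the cone term, which is precisely your ``two regimes'' heuristic; the role you assign to $C_0>1$ is not really the crux there (the gap $\delta<\delta_1$ and the freedom in $R_3$ do the work), but this does not affect the validity of your outline.
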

\begin{proof}
In the proof we will denote by $c$ those positive constants whose actual value is irrelevant and may vary even within a line. 
Furthermore, the estimates will be done in $3\Omega\setminus \bar B(o,R_3)$, with $R_3$ large enough.
Note that 
	\begin{align*}
	\mathcal{Q}[\psi] &= \frac{\Delta_{-\log\varrho}\psi}{\sqrt{\varrho^{-2}+|\nabla\psi|^2}}- \frac{1}{2} \frac{\ang{\nabla(\varrho^{-2}+|\nabla\psi|^2),\nabla \psi}}{(\varrho^{-2}+|\nabla\psi|^2)^{3/2}} - nH \\
	&= \frac{(\varrho^{-2}+|\nabla\psi|^2)\Delta_{-\log\varrho}\psi - \frac{1}{2} \ang{\nabla(\varrho^{-2}+|\nabla\psi|^2),\nabla\psi} -  (\varrho^{-2}+|\nabla\psi|^2)^{3/2}nH}{(\varrho^{-2}+|\nabla\psi|^2)^{3/2}}
	\end{align*}
and hence we only need to find $R_3=R_3(C,C_0,\delta)\ge R_2$ so that
	\begin{align}\label{psi_barr_claim}
&	(\varrho^{-2}+|\nabla\psi|^2)^{3/2}\mathcal{Q}[\psi]\\
&=	(\varrho^{-2}+|\nabla\psi|^2)\Delta_{-\log\varrho}\psi - \frac{1}{2} \ang{\nabla(\varrho^{-2}+|\nabla\psi|^2),\nabla\psi} -  (\varrho^{-2}+|\nabla\psi|^2)^{3/2}nH < 0\nonumber
	\end{align}
holds in the set $3\Omega\setminus \bar B(o,R_3)$.

The function $\psi$ is $C^{\infty}$-smooth and, in $M\setminus\{o\}$, we have
	\[
	\nabla \psi = A(-R_3^{\delta}\delta r^{-\delta-1}\nabla r + \nabla h).
	\]
By Lemma~\ref{arvio_lause}, $|\nabla h| \le c_1/ f_a(r)\le \delta r^{-\delta-1}$ when
$r$ is large enough and $0<\delta<\min\{\delta_1,\phi-1\}$; see \cite[(3.30)]{HoVa}. Hence, for any fixed $\ve>0$, 
we have
	\begin{align*}
		|\nabla \psi|^2 &= (AR_3^\delta \delta)^2 r^{-2\delta-2} + A^2|\nabla h|^2 - 2A^2R_3^\delta \delta 
      r^{-\delta-1} \ang{\nabla r,\nabla h} \\
&      \le A^2\delta^2\bigl(R_3^{2\delta}+2R_3^\delta +1\bigr) r^{-2\delta-2}\\
&\le 
      (1+\varepsilon ) (AR_3^\delta\delta)^2  r^{-2\delta-2}
	\end{align*}
and
\[
|\nabla \psi|^2  \ge A^2\delta^2\bigl( R_3^{2\delta}-2R_3^\delta\bigr)r^{-2\delta-2}
\ge (1-\varepsilon)  (AR_3^\delta\delta)^2  r^{-2\delta-2}
	\]
in $3\Omega\setminus \bar B(o,R_3)$ for $R_3$ large enough.

Next we fix $\ve>0$ so that 
\begin{equation}\label{eps-bound}
\ve < 1-\frac{\delta+1}{(n-1)(1-\delta)\phi},
\end{equation}
which is possible since $\delta<\delta_1$. To simplify the notation below, we denote $\tilde{\ve}=\ve\,\sgn(\p_r\varrho)$.
In order to estimate the first term in the right-hand side of \eqref{psi_barr_claim}, we first observe that
\begin{equation}\label{nega-term}
-(n-1)\frac{rf_a'(r)}{f_a(r)} 
		-  \frac{r\p_r \varrho}{\varrho}  + \frac{\delta+1}{1-\varepsilon}<0
\end{equation}
for $r\ge R_3$ by \eqref{rho_sign_assum} and \eqref{eps-bound}; see \cite[(3.25)]{HoVa}. Then we can estimate the weighted Laplacian of $\psi$ as
	\begin{align*}
	\Delta_{-\log\varrho} \psi &= AR_3^{\delta} \Delta_{-\log\varrho} r^{-\delta} + A\Delta_{-\log\varrho} h \\
	&= AR_3^{\delta} \left(\Delta r^{-\delta} + \frac{1}{\varrho} \ang{\nabla\varrho,\nabla(r^{-\delta})}\right)
		+ A\left(\Delta h + \frac{1}{\varrho} \ang{\nabla \varrho,\nabla h} \right) \\
	&= AR_3^{\delta}\left(-\delta r^{-\delta-1} \Delta r 
		-\delta r^{-\delta-1} \frac{1}{\varrho} \ang{\nabla\varrho,\nabla r} + \delta(\delta+1)r^{-\delta-2} \right) \\ 
	 	&\qquad +A\left(\Delta h + \frac{1}{\varrho} \ang{\nabla \varrho,\nabla h} \right) \\
	&\le AR_3^{\delta}\delta \left(-  (n-1)\frac{rf_a'(r)}{f_a(r)} 
		-  \frac{r\p_r \varrho}{\varrho}  + \delta+1 \right)r^{-\delta-2} \\
		&\qquad + A\left(nc_2 r^{-C_4-1}\frac{f_a'(r)}{f_a(r)} + \frac{c_2|\nabla \varrho|}{\varrho f_a(r)}  \right) \\
	&\le  AR_3^{\delta}\delta \left(\frac{-(1-\ve)(n-1)rf_a'(r)}{f_a(r)} 
		-  \frac{(1-\tilde{\ve})r\p_r \varrho}{\varrho}  + \delta+1 \right)r^{-\delta-2}<0
	\end{align*}
for  $r\ge R_3$. In the last step we used \eqref{rho_sign_assum}, 
\eqref{rho_rad_assum}, and the fact that $C_4>\delta$.
Hence
\begin{align}\label{psi_barr_1term}
	&(\varrho^{-2}+|\nabla\psi|^2) \Delta_{-\log\varrho}\psi \nonumber\\
	 &\ \le -\bigl(\varrho^{-2}+ (1-\ve)(A R_3^\delta \delta)^2 r^{-2\delta-2}\bigr) AR_3^\delta \delta
\biggl( \frac{(1-\ve)(n-1)rf_a'(r)}{f_a(r)} \\
 &\qquad +  \frac{(1-\tilde{\ve})r\p_r \varrho}{\varrho} -1-\delta \biggr)r^{-\delta-2}.\nonumber
\end{align}

To estimate the second term of \eqref{psi_barr_claim} we split it into two parts as
	\[
	-\frac{1}{2}\ang{\nabla(\varrho^{-2}+ |\nabla\psi|^2),\nabla \psi} = 
	-\frac{1}{2}\ang{\nabla(\varrho^{-2}),\nabla\psi} 
	-\frac{1}{2}\ang{\nabla|\nabla\psi|^2,\nabla\psi}.
	\] 
For the first term, by \eqref{rho_rad_assum} and Lemma \ref{perusta}, we have
	\begin{align}\label{psi_bar_2_1term}
	-\frac{1}{2}\ang{\nabla(\varrho^{-2}),\nabla \psi} &= \ang{\frac{\nabla\varrho}{\varrho^3},
	\nabla \psi}  = \ang{\frac{\nabla\varrho}{\varrho^3}, -AR_3^\delta \delta r^{-\delta-1}\nabla r} 
	+ \ang{\frac{\nabla\varrho}{\varrho^3},A\nabla h} \nonumber \\
	&\le -AR_3^\delta \delta r^{-\delta-1} \frac{\p_r\varrho}{\varrho^3} 
	+ c_2 A \frac{|\nabla\varrho |}{\varrho^3f_a(r)}  \\
	&\le -(1-\tilde{\ve})AR_3^\delta \delta  r^{-\delta-1}\frac{\p_r\varrho}{\varrho^3}.\nonumber
	\end{align}
To estimate the second term we note that
	\begin{align*}
	\nabla|\nabla\psi|^2 &= A^2\nabla\ang{R_3^{\delta}\nabla(r^{-\delta})
	+\nabla h,R_3^{\delta}\nabla(r^{-\delta})+\nabla h}\\
	&=(AR_3^{\delta})^2\nabla\ang{\nabla(r^{-\delta}),\nabla(r^{-\delta})} 
	+2A^2 R_3^{\delta}\nabla\ang{\nabla(r^{-\delta}),\nabla h}
	+A^2 \nabla\ang{\nabla h,\nabla h}
	\end{align*}
and hence,
by a straightforward computation using the estimates of Lemma \ref{perusta}, we get
	\begin{align}\label{psi_bar_2_2term}
	-\frac{1}{2}\ang{\nabla|\nabla\psi|^2,\nabla\psi} &= 
	-\frac{1}{2}(AR_3^{\delta})^2\ang{\nabla\ang{\nabla(r^{-\delta}),\nabla(r^{-\delta})},\nabla\psi}\nonumber \\
	&\quad -  A^2 R_3^{\delta}\ang{\nabla\ang{\nabla(r^{-\delta}),\nabla h},\nabla\psi} 
	-\frac{1}{2} A^2 \ang{\nabla\ang{\nabla h,\nabla h},\nabla\psi} \nonumber \\
	&\le (AR_3^\delta \delta)^2 (\delta+1)r^{-2\delta-3} \ang{\nabla r,\nabla\psi} 
	+ A^2 R_3^\delta c_2 r^{-C_4 -2} \frac{f_a'(r)}{f_a(r)}|\nabla\psi| \nonumber \\
	&\quad + \frac 12 A^2 c_2r^{-C_4-2}\frac{f_a'(r)}{f_a(r)}|\nabla\psi|  \\
	&\le (AR_3^\delta \delta)^2 (\delta+1)r^{-2\delta-3} \ang{\nabla r,-AR_3^\delta \delta r^{-\delta-1}\nabla r 
	+ A\nabla h} \nonumber \\
	&\quad + c r^{-C_4-\delta-3} \frac{f_a'(r)}{f_a(r)} \nonumber \\
	&\le -c r^{-3\delta-4} + c r^{-2\delta-3}\frac{1}{f_a(r)} + c r^{-C_4-\delta-3} \frac{f_a'(r)}{f_a(r)}\nonumber\\
	&\le -c r^{-3\delta-4}   + c r^{-C_4-\delta-3} \frac{f_a'(r)}{f_a(r)},\nonumber
	\end{align}
where in the last step we have absorbed the term $c r^{-2\delta-3}\frac{1}{f_a(r)}$ into the first by using the fact that
$f_a(r)\ge cr^\phi$ and the choice of $\delta<\phi-1$.
Putting together \eqref{psi_bar_2_1term} and \eqref{psi_bar_2_2term} we get
	\begin{equation*}
		-\frac{1}{2}\ang{\nabla(\varrho^{-2}+ |\nabla\psi|^2),\nabla \psi} 
	\le
	-(1-\tilde{\ve})AR_3^\delta\delta r^{-\delta-1} \frac{\p_r\varrho}{\varrho^3} 
	-c r^{-3\delta-4} + c r^{-C_4-\delta-3} \frac{f_a'(r)}{f_a(r)},
	\end{equation*}
and combining this with \eqref{psi_barr_1term} yields
\begin{align}\label{psi_barr_2term}
&(\varrho^{-2}+|\nabla\psi|^2)\Delta_{-\log\varrho}\psi
-\frac{1}{2}\ang{\nabla(\varrho^{-2}+ |\nabla\psi|^2),\nabla \psi} \nonumber\\
	 & \quad\le 
-\frac{AR_3^\delta\delta}{\varrho^2}\left(\frac{(1-\ve)(n-1)rf_a'(r)}{f_a(r)}+\frac{2(1-\tilde{\ve})r\p_r\varrho}{\varrho}-\delta-1\right)r^{-\delta-2} \\
&\quad\ -(1-\ve)(AR_3^\delta\delta)^3\left(\frac{(1-\ve)(n-1)rf_a'(r)}{f_a(r)}+\frac{(1-\tilde{\ve})r\p_r\varrho}{\varrho}-1-\delta+c\right)r^{-3\delta-4},
\nonumber 
	 \end{align}
where we have absorbed the positive term $c r^{-C_4-\delta-3} f_a'(r)/f_a(r)$ by using the assumption $\delta<C_4/2$.
Finally, using the assumption \eqref{asym_meancurv_assum} we can estimate the term involving the mean curvature as 
	\begin{align}\label{mean-term-est}
	-&(\varrho^{-2}+|\nabla\psi|^2)^{3/2}nH\nonumber \\
	&\le (1+\ve)^{3/2} ( \varrho^{-2}+ (AR_3^\delta \delta)^2 r^{-2\delta-2})^{3/2} n|H|\\
	&\le 
	\frac{c}{\varrho^2}\left(\frac{(n-1)rf_a'(r)}{f_a(r)}+\frac{r\p_r\varrho}{\varrho}-1\right)r^{-\delta_1-2} \nonumber\\
&\quad	+c\left(\frac{(n-1)rf_a'(r)}{f_a(r)}+\frac{r\p_r\varrho}{\varrho}-1\right)r^{-2\delta-\delta_1-4}.\nonumber
	\end{align}
Combining  \eqref{psi_barr_2term} and \eqref{mean-term-est} and noting that $\delta_1>\delta$ we obtain \eqref{psi_barr_claim}
and the claim follows.
\end{proof}

\begin{rem}
In the case of the hyperbolic (ambient) space 
$\HH^{n+1}=\HH^n\times_{\cosh r}\R$ we have $\varrho=\varrho_+(r)=\cosh r$ and $f_a(r)=\sinh r$ on $\HH^n$ for any reference point $o\in \HH^n$.  Hence \eqref{rho_sign_assum} and \eqref{rho_rad_assum} hold trivially. Moreover, we may choose $\phi>1$ as large as we wish by increasing $R_3$ and therefore \eqref{eps-bound} and \eqref{nega-term} hold even with $\delta=\delta_1$. Finally,  
\begin{equation*}
-(\varrho^{-2}+|\nabla\psi|^2)^{3/2}nH
\le (1+\ve)(AR_3^\delta \delta)^3 r^{-3\delta-3}) n|H|
\end{equation*}
for $r$ large enough, and consequently we may assume $\delta=\delta_1$ in \eqref{asym_meancurv_assum} thus reducing it to an asymptotically sharp assumption.

Similarly, if the sectional curvatures of $M$ have estimates 
\[
-r(x)^{-2-\varepsilon}e^{2kr(x)}\le K(P_x) \le -k^2
\]
for $r(x)\ge R_0$ as in Example~\ref{ex2} and if the warping function $\varrho$ satisfies \eqref{rho_sign_assum},   \eqref{rho_rad_assum}, and 
\[
\varrho(x)\ge c r(x)^2
\]
for $r(x)\ge R_0$, we may take $\delta=\delta_1$ in \eqref{asym_meancurv_assum}.
\end{rem}

\section{Solving the asymptotic Dirichlet problem}\label{ADP_sec}
In this section we solve the asymptotic Dirichlet problem \eqref{ADP}
on a Cartan-Hadamard manifold $M$ with given boundary data $\varphi\in C(\partial_\infty M)$.
If the ambient manifold $N=M\times_\varrho\R$ is a Cartan-Hadamard manifold, too, we will interpret the graph 
$S=\{(x,u(x))\colon x\in M\}$ of the solution $u$ as a Killing graph with prescribed mean curvature $H$
and continuous boundary values at infinity. We recall from 
\cite[7.7]{BO} that $N$ is a Cartan-Hadamard manifold if and only if the warping function $\varrho$ is convex.
In that case we may consider $\partial_\infty M$ as a subset of $\partial_\infty N$ in the sense that a representative $\gamma$ of 
a boundary point $x_0\in\partial_\infty M$ is also a representative of a point $\tilde{x}_0\in \partial_\infty N$ since
$M$ is a totally geodesic submanifold of $N$. Given $\varphi\in C(\partial_\infty M)$ we define its Killing graph on 
$\partial_\infty N$ as follows. For $x\in\partial_\infty M$, take the (totally geodesic) leaf 
\[
M_{\varphi(x)}=\Psi(M,\varphi(x))=
\{(y,\varphi(x))\colon y\in M\}\subset M\times \R,
\]
where $\Psi$ is the flow generated by $X$. Let $\gamma^{x}$ be any geodesic on $M$ representing $x$. Then 
$\tilde{\gamma}^{x}\colon t\mapsto\Psi(\gamma^x(t),\varphi(x))$ is a geodesic on $M_{\varphi(x)}$ and also on $N$ since 
$\Psi(\cdot,\varphi(x))$ is an isometry. Hence $\tilde{\gamma}^x$ defines a point in $\partial_\infty N$ which we, 
by abusing the notation, denote by $(x,\varphi(x))$. Using this notation, we call the set 
\[
\Gamma=\{(x,\varphi(x))\colon x\in\partial_\infty M\}\subset\partial_\infty N
 \]
 the Killing graph of $\varphi$. Note that, in general, $\partial_\infty N$ has no canonical smooth structure.
\begin{lem}\label{boundSisGamma}
  Let $u$ be the solution to \eqref{ADP} with boundary data $\varphi$ and let $S$ be the graph of $u$.
 If $\partial_\infty S=\bar{S}\setminus S$, where $\bar{S}$ is the closure of $S$ in the cone topology $\bar{N}$,
 we have $\partial_\infty S=\Gamma.$ 
  \end{lem}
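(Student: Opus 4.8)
The plan is to establish the two inclusions $\Gamma\subseteq\partial_\infty S$ and $\partial_\infty S\subseteq\Gamma$, both of which I would reduce to a single convergence claim. Two preliminary remarks are in order. Since $\varrho$ is convex, $N=M\times_\varrho\R$ is a Cartan--Hadamard manifold, so $\bar N$ is compact and each leaf $M\times\{s\}$ is a complete totally geodesic submanifold, isometric to $M$ via $\Psi(\cdot,s)$; in particular geodesics of $M\times\{s\}$ are geodesics of $N$. Moreover the flow $\Psi_s\colon N\to N$, $\Psi_s(y,t)=(y,t+s)$, generated by the Killing field $X$ is, for each $s\in\R$, an isometry of $N$, and $s\mapsto\Psi_s$ is smooth. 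Finally, by the uniform height estimate (Lemma~\ref{unif_height_estim}, resp.\ Lemma~\ref{unif-heigh2}) the solution $u$ is bounded on $M$, and since $u\in C^{2,\alpha}(M)\cap C(\bar M)$ and $\pi_1\colon N\to M$ is continuous, the graph $S$ is closed in $N$; hence $\bar S\setminus S\subseteq\partial_\infty N$, so it is meaningful to compare it with $\Gamma\subseteq\partial_\infty N$.

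The heart of the matter is the following claim: \emph{if $x_j\to x_0\in\partial_\infty M$ in $\bar M$, then $(x_j,u(x_j))\to(x_0,\varphi(x_0))$ in $\bar N$.} To prove it, write $\gamma_j=\gamma^{o,x_j}$ and $\gamma=\gamma^{o,x_0}$ for the unit speed geodesic rays in $M$; by definition of the cone topology on $\bar M$ we have $d(o,x_j)\to\infty$ and $\dot\gamma_j(0)\to\dot\gamma(0)$ in $T_oM$, and since $u$ extends continuously to $\bar M$ with $u|\partial_\infty M=\varphi$ we also have $u(x_j)\to\varphi(x_0)$. Lift $\gamma_j$ to the geodesic ray $\tilde\gamma_j(t)=(\gamma_j(t),0)$ of $N$, which lies in the totally geodesic leaf $M\times\{0\}$, and set $\sigma_j=\Psi_{u(x_j)}\circ\tilde\gamma_j$. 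As $\Psi_{u(x_j)}$ is an isometry, $\sigma_j$ is a unit speed geodesic ray of $N$ with $\sigma_j(0)=(o,u(x_j))$, $\dot\sigma_j(0)=d\Psi_{u(x_j)}\bigl(\dot{\tilde\gamma}_j(0)\bigr)$, and $\sigma_j\bigl(d(o,x_j)\bigr)=\Psi_{u(x_j)}(x_j,0)=(x_j,u(x_j))$. By the very definition of $\Gamma$, the ray $\sigma=\Psi_{\varphi(x_0)}\circ\tilde\gamma$, with $\tilde\gamma(t)=(\gamma(t),0)$, represents the point $(x_0,\varphi(x_0))$. Since $u(x_j)\to\varphi(x_0)$ and $s\mapsto\Psi_s$ is smooth, $\sigma_j(0)\to\sigma(0)$ and $\dot\sigma_j(0)\to\dot\sigma(0)$, so $\sigma_j\to\sigma$ locally uniformly; as $d(o,x_j)\to\infty$, the points $(x_j,u(x_j))=\sigma_j\bigl(d(o,x_j)\bigr)$ converge in $\bar N$ to $\sigma(\infty)=(x_0,\varphi(x_0))$. (For this last step: any subsequential limit of $\sigma_j\bigl(d(o,x_j)\bigr)$ in the compact space $\bar N$ lies in $\partial_\infty N$, because $d\bigl(\sigma(0),\sigma_j(d(o,x_j))\bigr)\ge d(o,x_j)-d\bigl(\sigma(0),\sigma_j(0)\bigr)\to\infty$; and since the initial segments of the $\sigma_j$ converge locally uniformly to $\sigma$, that limit can only be $\sigma(\infty)$, by the description of the cone topology of a Cartan--Hadamard manifold, cf.\ \cite{HoVa}, \cite{BO}.)

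Granting the claim, both inclusions will follow immediately. For $\Gamma\subseteq\partial_\infty S$ one fixes $x_0\in\partial_\infty M$ and takes $x_j=\gamma^{o,x_0}(j)\in M$; then $x_j\to x_0$ in $\bar M$, the points $(x_j,u(x_j))$ belong to $S$, and by the claim they converge to $(x_0,\varphi(x_0))$, which therefore lies in $\bar S\setminus S=\partial_\infty S$. For $\partial_\infty S\subseteq\Gamma$, given $\tilde x\in\partial_\infty S$ one chooses $x_j\in M$ with $p_j=(x_j,u(x_j))\to\tilde x$ in $\bar N$. If $\bigl(d(o,x_j)\bigr)$ had a bounded subsequence, a further subsequence would satisfy $x_{j_k}\to x_\infty\in M$, whence $p_{j_k}\to(x_\infty,u(x_\infty))\in N$ by continuity of $u$, contradicting $\tilde x\in\partial_\infty N$; hence $d(o,x_j)\to\infty$, and by compactness of $\bar M$ we may assume $x_j\to x_0\in\partial_\infty M$. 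The claim now yields $p_j\to(x_0,\varphi(x_0))$, so $\tilde x=(x_0,\varphi(x_0))\in\Gamma$.

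The only real obstacle is that the warping function $\varrho$ need not be bounded, so a leaf $M\times\{s\}$ is genuinely close to a neighbouring leaf $M\times\{s'\}$ only near $o$: two points $(x,s_1)$ and $(x,s_2)$ with $r(x)$ large sit at $N$-distance of order $\varrho(x)\,|s_1-s_2|$, which may be enormous even when $|s_1-s_2|$ is small. Consequently one cannot prove the claim by directly comparing $(x_j,u(x_j))$ with the leaf point $(x_j,\varphi(x_0))$ and invoking $u(x_j)\to\varphi(x_0)$. The device that circumvents this is to apply the ambient isometry $\Psi_{u(x_j)}$, which places $(x_j,u(x_j))$ at the far end, at parameter $d(o,x_j)\to\infty$, of a bona fide geodesic ray $\sigma_j$ whose initial point and velocity converge to those of a representative of $(x_0,\varphi(x_0))$; after that, only soft properties of the cone topology of $\bar N$ are needed.
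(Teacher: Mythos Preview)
Your proof is correct and follows the same strategy as the paper's: both inclusions hinge on the fact that the Killing flow $\Psi_s$ is an isometry of $N$, which lets one transport the leaf convergence $x_j\to x_0$ in $\bar M_{\varphi(x_0)}$ into the ambient $\bar N$. The paper argues the two inclusions separately and verifies convergence by showing that for every truncated cone $V=T(\dot\sigma_0,2\alpha,r)\subset\bar N$ around $(x_0,\varphi(x_0))$ one has $\Psi\bigl(T^M(\dot\sigma_0,\alpha,2r),(\varphi(x_0)-\delta,\varphi(x_0)+\delta)\bigr)\subset V$ for small $\delta$; you instead isolate a single convergence claim and prove it via locally uniform convergence of the rays $\sigma_j=\Psi_{u(x_j)}\circ\tilde\gamma_j$ toward $\sigma$. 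These are two phrasings of the same cone-topology fact, so the difference is organisational rather than substantive. Your final paragraph nicely isolates the only real subtlety---that $(x_j,u(x_j))$ and $(x_j,\varphi(x_0))$ may be far apart in $N$ when $\varrho$ is unbounded---and explains why routing through the isometry $\Psi_{u(x_j)}$ sidesteps it; the paper's cone-inclusion statement encodes the same workaround.
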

\begin{proof}
Suppose first that $x\in\partial_\infty S$ and let $(x_i,u(x_i))$ be a sequence in $S$ converging to $x$ in the cone topology of 
$\bar{N}$.
Since $\bar{M}$ is compact, there exist $x_0\in\partial_\infty M$ and  a subsequence $(x_{i_j},u(x_{i_j}))$ such that $x_{i_j}\to x_0\in\partial_\infty M$
in the cone topology of $\bar{M}$. Hence $u(x_{i_j})\to \varphi(x_0)$, and consequently 
$(x_{i_j},u(x_{i_j}))\to (x_0,\varphi(x_0))$ in the product topology of $\bar{M}\times\R$. On the other hand, 
$\Psi(x_{i_j},\varphi(x_0))\to (x_0,\varphi(x_0))$ in the cone topology of $M_{\varphi(x_0)}$. We need to verify that 
$\Psi(x_{i_j},u(x_{i_j}))\to (x_0,\varphi(x_0))$ in the cone topology of $\bar{N}$ which then implies that $x=(x_0,\varphi(x_0))\in\Gamma$. Towards this end, let $V$ be an arbitrary cone neighborhood in $\bar{N}$ of $(x_0,\varphi(x_0))$
and let $\sigma$ be a geodesic ray emanating from $(o,\varphi(x_0))$ representing $(x_0,\varphi(x_0))$. It is a geodesic ray both in $N$ and in $M_{\varphi(x_0)}$. Let 
$T(\dot{\sigma}_0,2\alpha,r)\subset V$ be a truncated cone in $\bar{N}$ and 
$T:=T^M(\dot{\sigma}_0,\alpha,2r)$ a truncated cone in $\bar{M}_{\varphi(x_0)}$. Then $\Psi(T,(\varphi(x_0)-\delta,\varphi(x_0)+\delta))\subset V$ for sufficiently small $\delta>0$. It follows that 
$\Psi(x_{i_j},u(x_{i_j}))\in V$ for all $i_j$ large enough, and therefore
$x=(x_0,\varphi(x_0))\in\Gamma$.

Conversely, if $(x_0,\varphi(x_0))\in\Gamma$, let $x_i\in M$ be a sequence such that $x_i\to x_0$ in the cone topology of $\bar{M}$.
Then $\Psi(x_i,u(x_i))\in S$ and $(x_i,u(x_i))\to (x_0,\varphi(x_0))$ in the product topology of $\bar{M}\times\R$. We need to show that $\Psi(x_i,u(x_i))\to (x_0,\varphi(x_0))\in\Gamma$ in the cone topology of $\bar{N}$. To prove this, fix $o=\Psi(x,\varphi(x_0))\in M_{\varphi(x_0)}$ and let $\sigma$ be a geodesic ray in $N$ 
(and in $M_{\varphi(x_0)}$) representing $(x_0,\varphi(x_0))$. Let 
$V=T(\dot{\sigma}_0,2\alpha,r)$ be an arbitrary truncated cone
neighborhood in $\bar{N}$ of $(x_0,\varphi(x_0))$. Furthermore, let $\delta>0$ be so small that $U:=\Psi(\tilde{V},(\varphi(x_0)-\delta,\varphi(x_0)+\delta))\subset V$, where $\tilde{V}=T(\dot{\sigma}_0,\alpha,2r)$ is a truncated cone neighborhood in $M_{\varphi(x_0)}$ of $(x_0,\varphi(x_0))$. Since $x_i\to x_0$ and $u(x_i)\to \varphi(x_0)$,
we obtain $\Psi(x_i,u(x_i))\in U$ for all sufficiently large $i$.
Hence $\Psi(x_i,u(x_i))\to (x_0,\varphi(x_0))\in\Gamma$ in the cone topology of $\bar{N}$.
\end{proof}

We formulate our global existence results in the following two theorems depending on the assumption on the prescribed mean curvature function $H$.
\begin{thm}\label{entire-exist1}
Let $M$ be a Cartan-Hadamard manifold satisfying the curvature
assumptions \eqref{curv-bound-gen} and \eqref{A1}--\eqref{A7} in Section \ref{sec_bar_infty}. Furthermore, assume that the prescribed mean curvature function 
$H\colon M \to \R$ satisfies the assumptions \eqref{height_mean_assum2} and 
\eqref{asym_meancurv_assum} with a convex warping function $\varrho$ satisfying \eqref{rho_height_assum}, \eqref{rho_height_assum1}, \eqref{rho_sign_assum}, and \eqref{rho_rad_assum}.
Then there exists a unique solution $u\colon M\to \R$
to the Dirichlet problem
	\begin{equation}\label{ADP}
		\begin{cases}
	\dv_{-\log \varrho} \dfrac{\nabla u}{\sqrt{\varrho^{-2} + |\nabla u|^2}} = nH(x) \quad \text{in } M \\
	u|\pinf M = \varphi
	\end{cases}
		\end{equation}
for any continuous function $\varphi\colon \pinf M\to \R$.
\end{thm}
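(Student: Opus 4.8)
The plan is to follow the scheme indicated just after the statements of the solvability theorems: solve the Dirichlet problem on an exhaustion of $M$ by geodesic balls, extract a locally convergent subsequence, and then produce barriers at $\pinf M$. First I would extend $\varphi$ to a function in $C(\bar M)$ and note that \eqref{height_mean_assum2} implies the hypothesis $n|H|<(n-1)f_a'/f_a+\varrho_+'/\varrho_+$ of Corollary~\ref{loc-exist-cor}: since $1+(1-\varepsilon)^2/(2\varepsilon-\varepsilon^2)=1/(2\varepsilon-\varepsilon^2)$ and $\varrho^{-2}\ge0$, the factor multiplying $(n-1)f_a'/f_a+\varrho_+'/\varrho_+$ in \eqref{height_mean_assum2} is at most $(1-\varepsilon)/\sqrt{1-\varepsilon^2}=\sqrt{(1-\varepsilon)/(1+\varepsilon)}<1$. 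Corollary~\ref{loc-exist-cor} then gives solutions $u_k\in C^{2,\alpha}(\Omega_k)\cap C(\bar\Omega_k)$ of \eqref{dir-aux} on $\Omega_k=B(o,k)$ with $u_k|\partial\Omega_k=\varphi$. Choosing $\xi_+=f_a$ in Lemma~\ref{unif_height_estim} — legitimate because the upper bound in \eqref{curv-bound-gen} is exactly $K_M(P_x)\le-f_a''(r(x))/f_a(r(x))$ — the function $u_+$ of \eqref{u_+definition} satisfies $\mathcal{Q}[u_+]\le0$ and $u_+\ge\|\varphi\|_{C^0}$, while $-u_+$ satisfies the reversed inequalities; since $\mathcal{Q}[u_k]=0$ and $-u_+\le\varphi\le u_+$ on $\partial\Omega_k$, the comparison principle for $\mathcal{Q}$ gives the uniform bound $|u_k|\le C_0:=\sup_M u_+<\infty$ on $\Omega_k$.

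Next I would pass to a global solution. Since \eqref{PDE} is invariant under the addition of constants, $\tilde u_k:=u_k+C_0+1$ is a positive solution with $1\le\tilde u_k\le 2C_0+1$, so Lemma~\ref{int-grad-est} applied on balls $B(p,R)\Subset M$ bounds $|\nabla u_k(p)|=|\nabla\tilde u_k(p)|$ by a constant independent of $k$; hence $\{u_k\}$ is bounded in $C^1$ on compact subsets of $M$. There the equation \eqref{non-div-0} becomes uniformly elliptic (the eigenvalues $1/W$ and $1/(\varrho^2W^3)$ of $(\sigma^{ij})$ stay bounded away from $0$ and $\infty$ because $W$ does), so Schauder estimates bound $\{u_k\}$ in $C^{2,\alpha}$ on compact sets, and a diagonal argument over an exhaustion extracts a subsequence $u_{k_j}\to u$ in $C^2_{\loc}(M)$ with $\mathcal{Q}[u]=0$ in $M$ and $|u|\le C_0$.

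The remaining and most delicate step is to show that $u$ attains the boundary values $\varphi$ on $\pinf M$. Fixing $x_0\in\pinf M$ and $\varepsilon>0$, continuity of $\varphi$ on $\bar M$ provides a truncated cone $T(v_0,\alpha,R)$, with $v_0=\dot\gamma^{o,x_0}_0$, on which $|\varphi-\varphi(x_0)|<\varepsilon$. I would fix $L>8/\pi$ with $3/L<\alpha$, enlarge the constant $R_3$ of Lemma~\ref{psi_barrier_lemma} so that also $R_3\ge\max\{R,R_1\}$ (this leaves its conclusion intact), and set $w=\varphi(x_0)+\varepsilon+\psi$ with $\psi=A(R_3^\delta r^{-\delta}+h)$ as in \eqref{psi_barr_def} and $A\ge2C_0$. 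Then $\mathcal{Q}[w]=\mathcal{Q}[\psi]<0$ in $3\Omega\setminus\bar B(o,R_3)$, and on the bounded domain $D_k:=(3\Omega\cap B(o,k))\setminus\bar B(o,R_3)$ one checks $w\ge u_k$ on $\partial D_k$: on $\{r=R_3\}$ and on the lateral face $\partial(3\Omega)\cap\{R_3\le r\le k\}$, where $h\equiv1$ by Lemma~\ref{arvio_lause}, we have $w\ge\varphi(x_0)+\varepsilon+A\ge C_0\ge u_k$, whereas on $\partial B(o,k)\cap3\Omega\subset T(v_0,\alpha,R)$ we have $u_k=\varphi<\varphi(x_0)+\varepsilon\le w$. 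Comparison on $D_k$ gives $u_k\le w$ there; letting $k=k_j\to\infty$ yields $u\le w$ in $3\Omega\setminus\bar B(o,R_3)$, and since $h(x)\to h(x_0)=0$ and $r(x)^{-\delta}\to0$ as $x\to x_0$, this gives $\limsup_{x\to x_0}u(x)\le\varphi(x_0)+\varepsilon$, hence $\limsup_{x\to x_0}u(x)\le\varphi(x_0)$. Applying the same reasoning to $-u$, which solves the Dirichlet problem with mean curvature $-H$ and boundary function $-\varphi$ and for which $\psi$ is still a strict supersolution because \eqref{asym_meancurv_assum} constrains only $n|H|$, gives $\liminf_{x\to x_0}u(x)\ge\varphi(x_0)$. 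Thus $u$ extends continuously to $\bar M$ with $u|\pinf M=\varphi$.

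For uniqueness: if $u_1,u_2$ both solve \eqref{ADP}, then $u_1-u_2\in C(\bar M)$ vanishes on the compact set $\pinf M$ and satisfies a linear elliptic equation with no zeroth order term (write $\mathcal{Q}[u_1]-\mathcal{Q}[u_2]$ in divergence form along the segment joining $u_1$ and $u_2$); since $\bar M$ is compact, $u_1-u_2$ attains its extrema, and the strong maximum principle then forces it to vanish identically. I expect the main obstacle to be the bookkeeping in the step above — specifically, choosing the aperture $1/L$ small relative to the modulus of continuity of $\varphi$ at $x_0$ so that the outer boundary piece $\partial B(o,k)\cap3\Omega$ lies inside $\{\varphi<\varphi(x_0)+\varepsilon\}$, which is what allows the barrier $\psi$, necessarily small far from $o$, to dominate $u_k$ on all of $\partial D_k$ — the genuinely quantitative input having already been supplied by Lemmas~\ref{unif_height_estim}, \ref{int-grad-est}, and \ref{psi_barrier_lemma}.
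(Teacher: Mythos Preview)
Your proof is correct and follows essentially the same route as the paper's: extend $\varphi$, solve on balls via Corollary~\ref{loc-exist-cor}, use the global barrier $u_+$ of Lemma~\ref{unif_height_estim} for a uniform height bound, pass to a subsequential $C^2_{\loc}$-limit via interior gradient and Schauder estimates, and then use the local barrier $\psi$ of Lemma~\ref{psi_barrier_lemma} together with the height bound to pinch $u$ near any $x_0\in\pinf M$. The minor differences are cosmetic: the paper takes $A=2\|\varphi\|_\infty$ and additionally exploits \eqref{u+_infty} (choosing $R_3$ so that $u_+\le\|\varphi\|_\infty+\varepsilon/2$ for $r\ge R_3$) to control $u_k$ on the inner boundary $\partial U\cap\bar B_k$, whereas you use the cruder bound $|u_k|\le C_0=\sup_M u_+$ and compensate with $A\ge 2C_0$; and for the lower barrier the paper observes directly that $-\psi$ is a subsolution (since the proof of Lemma~\ref{psi_barrier_lemma} actually bounds $\dv_{-\log\varrho}(\nabla\psi/W)+n|H|<0$), while you pass to $-u$ with mean curvature $-H$---both are valid.
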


\begin{thm}\label{entire-exist2}
Let $M$ be a Cartan-Hadamard manifold satisfying the curvature
assumptions \eqref{curv-bound-gen} and \eqref{A1}--\eqref{A7} in Section \ref{sec_bar_infty}. Furthermore, assume that the prescribed mean curvature function 
$H\colon M \to \R$ satisfies the assumptions \eqref{HVbound} and 
\eqref{asym_meancurv_assum} with a convex warping function $\varrho$ satisfying \eqref{rho+assumption2}, \eqref{rho_sign_assum}, and \eqref{rho_rad_assum}.
Then there exists a unique solution $u\colon M\to \R$
to the Dirichlet problem \eqref{ADP}
for any continuous function $\varphi\colon \pinf M\to \R$.
\end{thm}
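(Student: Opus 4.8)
\emph{Proof plan.} The plan is to run the usual continuity--compactness--barrier scheme, exactly as for Theorem~\ref{entire-exist1}; the only difference is that the uniform height bound will now be provided by the global barrier $V$ of Lemma~\ref{unif-heigh2} (which requires \eqref{HVbound}, \eqref{rho+assumption2} and \eqref{a0cond}) rather than by $u_+$. First I would extend $\varphi\in C(\pinf M)$, via Tietze's theorem, to a function $\varphi\in C(\bar M)$ with $\|\varphi\|_{C^0(\bar M)}=\|\varphi\|_{C^0(\pinf M)}$, and fix the exhaustion $\Omega_k=B(o,k)$ of $M$. Since $k-d(x)=r(x)$ and, by the Laplacian comparison \eqref{laplacecomparison2} together with \eqref{rho+assumption2},
\[
nH_{k-d(x)}=\Delta_{-\log\varrho}r(x)\ge (n-1)\frac{f_a'(r(x))}{f_a(r(x))}+\frac{\varrho_+'(r(x))}{\varrho_+(r(x))},
\]
the assumptions \eqref{HVbound} and \eqref{asym_meancurv_assum} on $H$ yield $|H(x)|<H_{k-d(x)}$ in $\bar\Omega_k$, so Corollary~\ref{loc-exist-cor} produces solutions $u_k\in C^{2,\alpha}(\Omega_k)\cap C(\bar\Omega_k)$ of \eqref{dir-aux} with $u_k|\partial\Omega_k=\varphi$.

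Next I would pass to a limit. The operator $\mathcal{Q}$ is translation invariant (it depends only on $\nabla u$) and can be written in divergence form $\mathcal{Q}[u]=\varrho^{-1}\dv(\varrho\nabla u/W)-nH$, so the standard comparison principle applies. By \eqref{Vsuper} we have $\mathcal{Q}[V]\le 0=\mathcal{Q}[u_k]$ in $\Omega_k$ and $V\ge\|\varphi\|_\infty\ge u_k$ on $\partial\Omega_k$ by \eqref{Vheight}; hence $u_k\le V$, and likewise $u_k\ge -V$ since $\mathcal{Q}[-V]\ge 0$. Thus $\sup_k\|u_k\|_{C^0(\Omega_k)}\le\sup_M V=:H_0<\infty$ by \eqref{Vheight}--\eqref{Vlimes}. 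For a fixed compact $K\subset M$ and radius $R>0$, and $k$ large enough that $B(p,R)\subset\Omega_k$ for every $p\in K$, Lemma~\ref{int-grad-est} applied to the positive solution $u_k+H_0+1$ on $B(p,R)$ bounds $\sup_K|\nabla u_k|$ uniformly in $k$; then \eqref{non-div-0} is uniformly elliptic on $K$ with $C^\alpha$--bounded coefficients, so interior Schauder estimates give uniform $C^{2,\alpha}$ bounds on compact subsets. A standard diagonal compactness argument then extracts a subsequence converging in $C^2_{\loc}(M)$ to a solution $u$ of \eqref{ADP} in $M$ with $|u|\le H_0$.

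Finally I would check that $u$ extends continuously to $\pinf M$ with $u|\pinf M=\varphi$, using the barrier $\psi$ of Lemma~\ref{psi_barrier_lemma} (whose hypotheses \eqref{asym_meancurv_assum}, \eqref{rho_sign_assum}, \eqref{rho_rad_assum} are among our assumptions). Fix $x_0\in\pinf M$, put $v_0=\dot\gamma^{o,x_0}_0$, and let $\varepsilon>0$. Using continuity of $\varphi$ at $x_0$ I would take $L$ in \eqref{psi_barr_def} so large that $\varphi<\varphi(x_0)+\varepsilon$ on $\overline{3\Omega}$, with $\Omega=C(v_0,1/L)\cap M$; then fix $A\ge 2H_0$ and take $R_3\ge R_1$ from Lemma~\ref{psi_barrier_lemma}, so that $\mathcal{Q}[\psi]<0$ in $U:=3\Omega\setminus\bar B(o,R_3)$. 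On $\partial B(o,R_3)\cap 3\Omega$ and on $\partial(3\Omega)\setminus\bar B(o,R_3)$ (where $h\equiv 1$ by Lemma~\ref{arvio_lause}) one has $\psi\ge A\ge 2H_0$, so $\varphi(x_0)+\varepsilon+\psi\ge u_k$; on $\partial\Omega_k\cap U$ one has $\varphi(x_0)+\varepsilon+\psi>\varphi=u_k$. Since $\mathcal{Q}[\varphi(x_0)+\varepsilon+\psi]=\mathcal{Q}[\psi]<0=\mathcal{Q}[u_k]$ in the bounded domain $U\cap\Omega_k$, comparison gives $u_k\le\varphi(x_0)+\varepsilon+\psi$ there, and letting $k\to\infty$ along the subsequence, $u\le\varphi(x_0)+\varepsilon+\psi$ in $U$. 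Since $\psi$ extends continuously to $\bar M$ with $\psi(x_0)=0$, this gives $\limsup_{x\to x_0}u(x)\le\varphi(x_0)+\varepsilon$, hence $\le\varphi(x_0)$; the opposite inequality follows in the same way with $-\psi$ in place of $\psi$ --- the proof of Lemma~\ref{psi_barrier_lemma} uses only the bound on $n|H|$ in \eqref{asym_meancurv_assum}, which controls $+nH$ and $-nH$ symmetrically. Uniqueness will follow from the comparison principle on the compact space $\bar M$: the difference of two solutions of \eqref{ADP} with the same boundary values at infinity satisfies a linear elliptic equation without zeroth-order term, so the strong maximum principle forces it to be constant, hence to vanish. (When $\varrho$ is convex, $N=M\times_\varrho\R$ is Cartan--Hadamard, and by Lemma~\ref{boundSisGamma} the graph of $u$ is then the Killing graph over $M$ with asymptotic boundary $\Gamma$.)

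I expect the main obstacle to be the comparison step at infinity. All the genuinely hard analysis --- the height barrier $V$ and the infinity barrier $\psi$ --- is already contained in Lemmas~\ref{unif-heigh2} and \ref{psi_barrier_lemma}, so the remaining difficulty is bookkeeping: the constants $L$ (dictated by the modulus of continuity of $\varphi$ at $x_0$) and $A$ (dictated by the height bound $H_0$) must be fixed \emph{before} $R_3$, so that $\varphi(x_0)+\varepsilon+\psi$ dominates $u_k$ on all of $\partial(U\cap\Omega_k)$, and one must verify via \eqref{laplacecomparison2} that the approximating Dirichlet problems \eqref{dir-aux} on the balls $\Omega_k$ are indeed solvable.
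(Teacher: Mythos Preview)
Your proposal is correct and follows essentially the same approach as the paper: extend $\varphi$, solve on an exhaustion by balls via Corollary~\ref{loc-exist-cor}, obtain a uniform height bound from the global barrier $V$ of Lemma~\ref{unif-heigh2} (this is the one place where the proof differs from that of Theorem~\ref{entire-exist1}), pass to a limit via interior gradient and Schauder estimates together with a diagonal argument, and then verify the boundary values using the supersolution $\psi$ of Lemma~\ref{psi_barrier_lemma}. One minor bookkeeping correction: you cannot arrange $\varphi<\varphi(x_0)+\varepsilon$ on all of $\overline{3\Omega}$ by choosing $L$ large, since the cone contains points near $o$; the paper instead uses continuity of the extended $\varphi$ to get $|\varphi-\varphi(x_0)|<\varepsilon/2$ only on $\partial B_k\cap U$ for $k\ge k_0$, which is all that is needed for the comparison on $\partial V_k$.
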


\begin{proof} The proofs of Theorems \ref{entire-exist1} and \ref{entire-exist2} are similar. The only difference is to use the global 
barrier $u_+$ in Lemma~\ref{unif_height_estim} for \ref{entire-exist1} relative to $V$ in Lemma~\ref{unif-heigh2} for \ref{entire-exist2}. 

Extend the boundary data function $\varphi\in C(\pinf M)$ to a function $\varphi\in C(\bar M)$ and let
$B_k=B(o,k),\, k\in\N$ be an exhaustion of $M$. Then by Corollary~\ref{loc-exist-cor} there exist solutions
$u_k\in C^{2,\alpha}(B_k)\cap C(\bar B_k)$ to the Dirichlet problem 
	\[
	\begin{cases}
	\dv_{-\log \varrho} \dfrac{\nabla u_k}{\sqrt{\varrho^{-2} + |\nabla u_k|^2}} = nH(x) \quad \text{in } B_k \\
	u_k|\p B_k = \varphi.
	\end{cases}
	\]
By Lemma \ref{unif_height_estim},  we see that the sequence $(u_k)$ is uniformly bounded. 
Applying the gradient estimates in compact domains and then the diagonal argument, we 
obtain a subsequence converging
locally uniformly with respect to $C^2$-norm to a solution $u$. Next we show that $u$ extends
continuously to the boundary $\pinf M$ with $u|\pinf M = \varphi$.

Let $x_0\in\pinf M$ and $\ve>0$ be fixed. By the continuity of the function $\varphi$ we find
a constant $L\in(8/\pi,\infty)$ so that
	\[
	|\varphi(y)-\varphi(x_0)|<\ve/2
	\] 
whenever $y\in C(v_0,4/L) \cap\pinf M$, where $v_0 = \dot\gamma_0^{o,x_0}$ is the initial direction of
the geodesic ray representing $x_0$. 
Taking \eqref{u+_infty} into account, we can choose $R_3$ in Lemma \ref{psi_barrier_lemma} so big
that $u_+(r) \le ||\varphi||_\infty + \ve/2$ when $r\ge R_3$.

We will show that
	\begin{equation}\label{u_pinched}
	w^-(x) \coloneqq -\psi(x) + \varphi(x_0) -\ve \le u(x) \le w^+(x) \coloneqq \psi(x) + \varphi(x_0)+\ve
	\end{equation}
in the set $U\coloneqq 3\Omega\setminus \bar B(o,R_3)$. Here $\psi = A(R_3^\delta r^{-\delta}+h)$
is the supersolution from the Lemma \ref{psi_barrier_lemma} and $A = 2||\varphi||_\infty$.

Again, by the continuity of the function $\varphi$ in $\bar M$, we can choose $k_0$ such that 
$\p B_k \cap U \neq \emptyset$ and 
	\begin{equation}\label{phi_cont_cond}
	|\varphi(x)-\varphi(x_0)|<\ve/2
	\end{equation}
for every $x\in \p B_k\cap U$ when $k\ge k_0$. We denote $V_k = B_k\cap U$ for $k\ge k_0$ and
note that 
	\[
	\p V_k = (B_k \cap \bar U) \cup (\p U \cap \bar B_k).
	\]
We prove \eqref{u_pinched} by showing that
	\begin{equation}\label{u_k_pinched}
	w^-\le u_k \le w^+
	\end{equation}
holds in $V_k$ for every $k\ge k_0$.

Let $k\ge k_0$ and $x \in \p B_k \cap \bar U$. Since $u_k| \p B_k = \varphi |\p B_k$, \eqref{phi_cont_cond}
implies
	\[
	w^-(x) \le \varphi(x_0) - \ve/2 \le \varphi(x) = u_k(x) \le \varphi(x_0) + \ve/2 \le w^+(x).
	\]
By Lemma \ref{arvio_lause} 
	\[
	h|M\setminus \big( 2\Omega \cup B(o,R_1) \big) = 1
	\]
and since $R_3^{\delta} r^{-\delta} =1$ on $\p B(o,R_3)$ we have
	\[
	\psi \ge A= 2||\varphi||_\infty
	\]
on $\p U\cap B_k$. Since $u_+$ from Lemma \ref{unif_height_estim} is global supersolution with 
$u_+ \ge ||\varphi||_\infty$ on $\p B_k$, the comparison principle gives $u_k|B_k \le u_+|B_k$ and
by the choice of $R_3$, we have
	\[
	u_k \le ||\varphi||_\infty + \ve/2
	\]
in the set $B_k\setminus B(o,R_3)$.

Putting all together, it follows that
	\[
	w^+ = \psi + \varphi(x_0) +\ve \ge 2 ||\varphi||_\infty + \varphi(x_0) + 
	\ve \ge||\varphi||_\infty + \ve \ge u_k
	\]
on $\p U\cap \bar B_k$. Similarly we have $u_k\ge w^-$ on $\p U\cap \bar B_k$ and therefore
$w^-\le u_k \le w^+$ on $\p V_k$. By Lemma \ref{psi_barrier_lemma} $\psi$ is a supersolution
in $U$ and hence the comparison principle yields $u_k\le w^+$ in $U$. On the other hand, $-\psi$
is a subsolution in $U$, so $u_k\ge w^-$ in $U$, and \eqref{u_k_pinched} follows. This is true for every $k\ge k_0$
so we have \eqref{u_pinched}. Since $\lim_{x\to x_0}\psi(x) = 0$, we have
	\[
	\limsup_{x\to x_0} |u(x) - \varphi(x_0)| \le \ve. 
	\]
The point $x_0\in\pinf M$ and constant $\ve>0$ were arbitrary so this shows that $u$ extends continuously
to $C(\bar M)$ and $u|\pinf M = \varphi$. Finally, the uniqueness follows from the comparison principle.

\end{proof}

\section{Non-existence result}

In the following, we state a non-existence result for the prescribed weighted mean curvature graph equation by adapting the approach of Pigola, Rigoli and Setti in \cite{PigolaRigoliSetti}.
We denote by $A(r)$ the area of the geodesic sphere $\partial B(o,r)$ centred at a fixed point $o\in M$.

\begin{prop}
Let $p\colon[0,\infty) \to [0,\infty)$ be a continuous  function 
such that for some $\bar{R}>0$ and for all $r\ge \bar{R}$ at least one of the following conditions is satisfied:
	\begin{equation}\label{volume-assumption}
	\dfrac{\exp\left(D\left(\int_0^r\sqrt{p(s)}{\rm d}s\right)^2\right)}
{\varrho_0(r)^2 A(r)} \notin L^1(+\infty)
	\end{equation}
for some constant $D>0$ and  a smooth function $\varrho_0$, so that $\varrho(x)\le \varrho_0(r(x))$, or 
	\begin{equation}\label{volume-assumption2}
\dfrac{\left(\int_r^{3r/2}\sqrt{p(s)}{\rm d}s\right)^2}{r \log \big(\varrho_0(2r)^2 \vol(B(o,2r))\big)} \ge h (r) \notin L^1(+\infty)
	\end{equation}
with some continuous and monotonically non-increasing 
$h\colon [\bar R,\infty) \to (0,\infty)$.
Let $u,v \in C^2(M)$ satisfy
	\begin{align}\label{nonex_prop}
\dv_{-\log\varrho} \dfrac{\nabla u}{\sqrt{\varrho^{-2}+ |\nabla u|^2}} &- \dv_{-\log\varrho} \dfrac{\nabla v}{\sqrt{\varrho^{-2}+ |\nabla v|^2}}
	= q(x)\nonumber\\
	& \ge  p\big(r(x)\big) \varrho_0\big(r(x)\big) \ge 0, \\
	\noalign{and}
	\sup_M (u-v) &< +\infty.\nonumber
	\end{align}
Then, if $q\not\equiv 0$, there are no solutions to \eqref{nonex_prop}.
\end{prop}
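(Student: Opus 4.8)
The plan is to argue by contradiction: assume $u,v\in C^{2}(M)$ satisfy \eqref{nonex_prop} with $q\not\equiv 0$, and derive a contradiction. Set $w=u-v$, so that $w^{*}:=\sup_{M}w<\infty$, and write $X_{\phi}=\nabla\phi/\sqrt{\varrho^{-2}+|\nabla\phi|^{2}}$ and $T=X_{u}-X_{v}$. Since $\dv_{-\log\varrho}Z=\varrho^{-1}\dv(\varrho Z)$, the equation in \eqref{nonex_prop} reads $\dv(\varrho T)=\varrho q$ in $M$, where $\varrho q\ge p\,\varrho\varrho_{0}\ge p\,\varrho^{2}\ge 0$ because $\varrho\le\varrho_{0}(r)$ (here and below $p=p(r(x))$, $\varrho_{0}=\varrho_{0}(r(x))$). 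As $q\ge 0$ is continuous and $q\not\equiv 0$, fix $R_{0}>0$ with $\int_{B(o,R_{0})}\varrho q\,{\rm d}M=:2c_{0}>0$. I will use two pointwise properties of $T$: the free bound $|T|\le|X_{u}|+|X_{v}|<2$; and, since $T=\Phi_{x}(\nabla u)-\Phi_{x}(\nabla v)$ with $\Phi_{x}=\nabla_{\xi}\sqrt{\varrho^{-2}(x)+|\xi|^{2}}$ the gradient of a convex function whose Hessian is $\le\varrho(x)\,\mathrm{Id}$, the co-coercivity inequality
\[
\langle T,\nabla w\rangle\ \ge\ \varrho^{-1}\,|T|^{2}\ \ge\ 0
\]
(equivalently $T=\mathcal{A}(x)\nabla w$ with $\mathcal{A}\ge 0$ and $\mathcal{A}^{2}\le\varrho\mathcal{A}$, by averaging the Jacobian of $\Phi_{x}$ along $s\mapsto\nabla v+s\nabla w$).

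The next step is a Caccioppoli-type estimate. Let $\zeta$ be a compactly supported Lipschitz function, $0\le\zeta\le 1$, with $\zeta\equiv 1$ on $B(o,R_{0})$, and let $\Gamma\colon(-\infty,w^{*}]\to(0,\infty)$ be smooth and increasing. Testing $\dv(\varrho T)=\varrho q$ against $\Gamma(w)\zeta^{2}$ and integrating by parts with respect to $\varrho\,{\rm d}M$,
\[
\int_{M}\varrho q\,\Gamma(w)\zeta^{2}\,{\rm d}M=-\int_{M}\varrho\,\Gamma'(w)\zeta^{2}\langle T,\nabla w\rangle\,{\rm d}M-2\int_{M}\varrho\,\Gamma(w)\zeta\,\langle T,\nabla\zeta\rangle\,{\rm d}M .
\]
The first term on the right is nonpositive; in the second I estimate $|\langle T,\nabla\zeta\rangle|\le|T|\,|\nabla\zeta|$, use $|T|^{2}\le\varrho\langle T,\nabla w\rangle$ together with Young's inequality to absorb it into $\int\varrho\,\Gamma'(w)\zeta^{2}\langle T,\nabla w\rangle\,{\rm d}M$, and arrive at
\[
\int_{M}\varrho q\,\Gamma(w)\zeta^{2}\,{\rm d}M\ \le\ \int_{M}\frac{\varrho^{2}\Gamma(w)^{2}}{\Gamma'(w)}\,|\nabla\zeta|^{2}\,{\rm d}M .
\]
Since $\nabla\zeta=0$ on $B(o,R_{0})$, the right-hand side is supported in $M\setminus B(o,R_{0})$; on the left, $\int_{B(o,R_{0})}\varrho q\,\Gamma(w)\ge c_{1}:=2c_{0}\,\Gamma(\min_{\overline{B}(o,R_{0})}w)>0$, while off $B(o,R_{0})$ we have $\varrho q\ge p\varrho^{2}$. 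Hence
\[
c_{1}\ \le\ \int_{M\setminus B(o,R_{0})}\varrho^{2}\Big(\frac{\Gamma(w)^{2}}{\Gamma'(w)}\,|\nabla\zeta|^{2}-p\,\Gamma(w)\,\zeta^{2}\Big)\,{\rm d}M .
\]

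It remains to violate this for a good choice of $\Gamma$ and of a family of cutoffs; this is where the two hypotheses enter. Take $\Gamma(w)=e^{\alpha w}$, so $\Gamma^{2}/\Gamma'=\alpha^{-1}\Gamma$ and, after the co-area formula, the right-hand side is comparable to $\int^{\infty}\big(\alpha^{-1}(\zeta')^{2}-p\,\zeta^{2}\big)\,\varrho_{0}^{2}A\,{\rm d}t$ (times $e^{\alpha w^{*}}$), with $A(t)$ the area of $\partial B(o,t)$. Under \eqref{volume-assumption}: the ``free'' profile $\zeta=\exp(-\sqrt{\alpha}\int_{R_{0}}^{r}\sqrt p)$ makes the integrand vanish identically; truncating the Khas'minskii-type sharpening of this profile associated with the weight $e^{D(\int_{0}^{r}\sqrt p)^{2}}/(\varrho_{0}(r)^{2}A(r))$ leaves a residual equal to a constant multiple of $\big(\int_{R_{0}}^{\rho}e^{D(\int_{0}^{t}\sqrt p)^{2}}(\varrho_{0}(t)^{2}A(t))^{-1}\,{\rm d}t\big)^{-1}$, which tends to $0$ as $\rho\to\infty$ exactly because $e^{D(\int_{0}^{r}\sqrt p)^{2}}/(\varrho_{0}^{2}A)\notin L^{1}(+\infty)$; this contradicts $c_{1}>0$. (Put differently, \eqref{volume-assumption} is a Hille--Nehari type condition forcing the one-dimensional form $\int g\big(\alpha^{-1}(\zeta')^{2}-p\,\zeta^{2}\big)$, $g\asymp\varrho_{0}^{2}A$, to be nonpositive near infinity.) Under \eqref{volume-assumption2}: instead, for $r$ large I would use $\zeta\equiv 1$ on $B(o,r)$, $\zeta\equiv 0$ off $B(o,2r)$, with a logarithmic transition on $B(o,3r/2)\setminus B(o,r)$ whose energy is $\lesssim\varrho_{0}(2r)^{2}\vol(B(o,2r))\big(\int_{r}^{3r/2}\sqrt p\big)^{-2}$, and iterate over a sequence $r_{k}\to\infty$; the telescoping inequality so obtained, summed, contradicts $\sum_{k}h(r_{k})\,r_{k}=+\infty$, which holds because $h\notin L^{1}(+\infty)$ is monotone. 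Both constructions are the weighted, warped-product counterparts of the two volume tests of \cite{PigolaRigoliSetti}.

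The genuine obstacle is that the operator $\dv_{-\log\varrho}\big(\nabla\cdot/\sqrt{\varrho^{-2}+|\nabla\cdot|^{2}}\big)$ is non-uniformly elliptic: the smallest eigenvalue of its linearization degenerates like $\varrho^{-2}|\nabla u|^{-3}$ as $|\nabla u|\to\infty$, so no a priori gradient bound is available and parabolicity of $M$ cannot be invoked directly. This is precisely the role of the inequality $|T|^{2}\le\varrho\langle T,\nabla w\rangle$ — it trades the missing gradient control for the free bound $|T|<2$ and the weighted measure $\varrho\,{\rm d}M$. The second, more technical, difficulty is the coordinated choice of $\alpha$ and of the cutoffs so that the $p$-term genuinely absorbs the leading error and what remains is controlled by exactly the non-integrability in \eqref{volume-assumption} (resp.\ the divergent series built from $h$ in \eqref{volume-assumption2}); the constant $D$ and the monotone function $h$ in the statement are what make this balancing possible. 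The routine nuisances — $o$ being fixed and $r$ only Lipschitz across the cut locus — are handled as elsewhere in the paper, by working on $\overline{B}(o,R)$ and smoothing $r$.
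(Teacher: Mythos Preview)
Your identification of the two key adaptations is exactly what the paper emphasises: working with the weighted measure $\varrho\,{\rm d}M$, and the Mikljukov--Hwang--Collin--Krust inequality in the form $\langle T,\nabla w\rangle\ge\varrho^{-1}|T|^{2}$. Your Caccioppoli step
\[
\int_{M}\varrho q\,\Gamma(w)\zeta^{2}\,{\rm d}M\ \le\ \int_{M}\frac{\varrho^{2}\Gamma(w)^{2}}{\Gamma'(w)}\,|\nabla\zeta|^{2}\,{\rm d}M
\]
is correct. However, the subsequent reduction has a real gap.

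After moving the off-ball part of the left side to the right and taking $\Gamma(w)=e^{\alpha w}$, your inequality reads
\[
c_{1}\ \le\ \int_{M\setminus B(o,R_{0})}\varrho^{2}e^{\alpha w}\Bigl(\tfrac{1}{\alpha}|\nabla\zeta|^{2}-p\,\zeta^{2}\Bigr)\,{\rm d}M .
\]
You then assert that ``after the co-area formula, the right-hand side is comparable to $\int(\alpha^{-1}(\zeta')^{2}-p\zeta^{2})\varrho_{0}^{2}A\,dt$ times $e^{\alpha w^{*}}$''. This step is not justified: you only know $e^{\alpha w}\le e^{\alpha w^{*}}$, which gives the right inequality on the \emph{positive} part of the integrand but the wrong one on the negative part. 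With no lower bound on $w$ (only $\sup w<\infty$ is assumed), the factor $e^{\alpha w}$ cannot be stripped out, and the ``free profile'' argument --- which requires the integrand to be the radial expression $\alpha^{-1}(\zeta')^{2}-p\zeta^{2}$ alone --- does not go through. A related symptom is that your exponential profile $\zeta=e^{-\sqrt{\alpha}\int\sqrt p}$ produces factors like $e^{-2\sqrt{\alpha}\int\sqrt p}$, not $e^{D(\int\sqrt p)^{2}}$, so there is no clear mechanism by which the precise hypothesis \eqref{volume-assumption} (with the square of the integral in the exponent) enters your scheme.

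The paper, following Pigola--Rigoli--Setti, does not attempt a direct Caccioppoli estimate with a function $\Gamma(w)$. Instead it introduces an auxiliary parameter $t$, sets $w(x,t)=e^{t}(u-v)-\gamma$ with $\gamma=\sup_{M}(u-v)$, and tests with the vector field $Z=\gamma\psi^{2}w_{+}^{2}e^{g+t}T$, where $g(x,t)=f(r)^{2}/(t-\bar T)$ and $f(r)=C^{-1}\int_{R_{2}}^{r}\sqrt{p}$. Two features of this construction are essential and are exactly what your approach is missing: the shift by $\gamma$ makes $w_{+}(\,\cdot\,,0)\equiv 0$, so the lack of a lower bound on $u-v$ never obstructs the estimates; and the $t$-dependence of $g$ is engineered so that $q\,\partial_{t}g+\gamma\varrho|\nabla g|^{2}\le 0$ --- this is the place where $q\ge p\varrho_{0}$ and the \emph{square} $(\int\sqrt p)^{2}$ appear naturally, producing the exponent in \eqref{volume-assumption}. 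One then obtains a recursion for $E(R,T)=\int_{B(o,R)}q\,w_{+}^{3}\varrho\,{\rm d}M$ and concludes via the two alternatives \eqref{volume-assumption}--\eqref{volume-assumption2} exactly as in \cite{PigolaRigoliSetti}. Your sketch would need to incorporate this time-parameter device (or an equivalent mechanism) before the cutoff analysis can be completed.
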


\begin{proof}
The proof is very similar to that in \cite{PigolaRigoliSetti}, the only differences being our use of the divergence operator with respect to the weighted volume form $\varrho {\rm d}M$ and a suitable form of the Mikljukov-Hwang-Collin-Krust inequality which in our setting reads as follows
	\begin{align*}
	&\ang{\frac{\nabla u}{\sqrt{\varrho^{-2}+ |\nabla u|^2}} - \frac{\nabla v}{\sqrt{\varrho^{-2}+ |\nabla v|^2}},
	\nabla u - \nabla v} 
\\	
&	\ge \frac{1}{2} \left( \sqrt{\varrho^{-2} + |\nabla u|^2} + \sqrt{\varrho^{-2} + 
	|\nabla v|^2} \right)   
\left| \frac{\nabla u}{\sqrt{\varrho^{-2}+ |\nabla u|^2}} - \frac{\nabla v}{\sqrt{\varrho^{-2}+ |\nabla v|^2}} \right|^2
\\
& \ge \varrho^{-1}
\left| \frac{\nabla u}{\sqrt{\varrho^{-2}+ |\nabla u|^2}} - \frac{\nabla v}{\sqrt{\varrho^{-2}+ |\nabla v|^2}} \right|^2.
	\end{align*}
Together these result in the extra factors of $\varrho_0$ in 
\eqref{volume-assumption}, \eqref{volume-assumption2}, and on the right hand side of \eqref{nonex_prop}. Taking into account these differences the proof in \cite{PigolaRigoliSetti} applies almost verbatim.
\end{proof}

As direct corollaries of the previous theorem, we have
\begin{cor}
Let $u$ be a bounded solution to 
\[
\dv_{-\log \varrho} \dfrac{\nabla u}{\sqrt{\varrho^{-2} + |\nabla u|^2}} = nH(x) \quad \text{in } M, 
\]
with $H\ge 0$. 
\begin{enumerate}
\item[(i)]
Suppose that 
$\varrho(x)\le\varrho_0\big(r(x)\big)\le r(x)^{\beta_1},\ \beta_1>0$, and that $A(r)\le r^{\beta_2},\ \beta_2>0$, for large values of $r=r(x)$. Then
\[
\liminf_{r(x)\to\infty}H(x)\cdot\frac{r(x)^2\log r(x)}{\varrho_0\big(r(x)\big)}
=0.
\]
\item[(ii)]
Suppose that 
$\varrho(x)\le\varrho_0\big(r(x)\big)\le e^{\beta_1 r(x)},\ \beta_1>0$, and that $A(r)\le e^{\beta_2 r},\ \beta_2>0$, for large values of $r=r(x)$. Then
\[
\liminf_{r(x)\to\infty}H(x)\cdot\frac{r(x)\log r(x)}{\varrho_0\big(r(x)\big)}
=0.
\]
\item[(iii)]
Suppose that 
$\varrho(x)\le\varrho_0\big(r(x)\big)\le e^{\beta_1 r(x)^2},\ \beta_1>0$, and that $A(r)\le e^{\beta_2 r^2},\ \beta_2>0$, for large values of $r=r(x)$. Then
\[
\liminf_{r(x)\to\infty}H(x)\cdot\frac{\log r(x)}{\varrho_0\big(r(x)\big)}
=0.
\]
\end{enumerate}
\end{cor}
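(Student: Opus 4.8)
The plan is to derive each of the three corollaries as a straightforward specialization of the preceding non-existence proposition, applied with $v$ taken to be a constant (so that the whole Dirichlet operator on $v$ vanishes) and $u$ the given bounded solution. Since $u$ is bounded, $\sup_M(u-v)<\infty$ holds automatically for any constant $v$, and the left-hand side of \eqref{nonex_prop} reduces to $\dv_{-\log\varrho}\bigl(\nabla u/\sqrt{\varrho^{-2}+|\nabla u|^2}\bigr)=nH(x)=:q(x)\ge 0$ because $H\ge 0$. The contrapositive of the proposition then says: if one of the integral conditions \eqref{volume-assumption} or \eqref{volume-assumption2} holds for a suitable $p$ and $\varrho_0$ with $nH(x)\varrho_0(r(x))^{-1}\ge p(r(x))\ge 0$, then necessarily $q\equiv 0$, i.e. $H\equiv 0$. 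Running this as an argument by contradiction, I would suppose that the asserted $\liminf$ is a positive number $2\varepsilon_0>0$; this yields a radius $\bar R$ and a constant $c_0>0$ such that, for all $r\ge\bar R$, the relevant product of $H$ with the stated weight exceeds $c_0$, which is precisely what is needed to define an admissible comparison function $p$.

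Concretely, in case (i) the hypothesis $\liminf H(x)\,r(x)^2\log r(x)/\varrho_0(r(x))>0$ would give $H(x)\ge c_0\varrho_0(r(x))/(r(x)^2\log r(x))$ for $r(x)\ge\bar R$, so I would set $p(r)=n c_0/(r^2\log r)$, whence $nH(x)\ge p(r(x))\varrho_0(r(x))\ge 0$ as required in \eqref{nonex_prop}. With this $p$ one computes $\int_0^r\sqrt{p(s)}\,{\rm d}s\sim 2\sqrt{nc_0}\,\sqrt{\log r}$ up to lower-order terms, so $\exp\bigl(D(\int_0^r\sqrt{p})^2\bigr)$ grows like a power $r^{4Dnc_0}$; meanwhile the polynomial bounds $\varrho_0(r)\le r^{\beta_1}$ and $A(r)\le r^{\beta_2}$ make the denominator $\varrho_0(r)^2A(r)$ polynomial of degree at most $2\beta_1+\beta_2$. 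Choosing $D$ small enough (any $D<(2\beta_1+\beta_2-1)/(4nc_0)$ works, but I would just note that $D$ is free) makes the integrand in \eqref{volume-assumption} bounded below by $c\,r^{-1-\eta}$ only if we are careful — in fact the cleaner route is to pick $D$ so small that the exponent $4Dnc_0$ is as small as we like, making the integrand comparable to $r^{-(2\beta_1+\beta_2)+o(1)}$, which fails to be in $L^1(\infty)$ once $2\beta_1+\beta_2\le 1$; but since $\beta_1,\beta_2$ are only assumed positive, the honest statement is that for \emph{sufficiently small} $D$ the quotient is not integrable, and that is all the proposition needs. I would phrase it that way. The contradiction with $H\equiv 0$ (which would force the $\liminf$ to be $0$) completes (i).

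For cases (ii) and (iii) the structure is identical, only the rates change, and it is case (iii) that cleanly uses the Gaussian condition \eqref{volume-assumption}. In (ii), $H(x)\ge c_0\varrho_0(r)/(r\log r)$ suggests $p(r)=nc_0/(r\log r)$, so $\int_0^r\sqrt{p}\sim c\sqrt{r/\log r}$ and $(\int_0^r\sqrt{p})^2\sim c r/\log r$, giving $\exp(D(\cdots)^2)=\exp(cDr/\log r)$; against the exponential bounds $\varrho_0(r)^2A(r)\le e^{(2\beta_1+\beta_2)r}$ this again fails to be integrable for $D$ small enough — here one should rather use condition \eqref{volume-assumption2}, where $\int_r^{3r/2}\sqrt{p}\sim c\sqrt{r/\log r}$ gives numerator $\sim cr/\log r$ while the denominator $r\log(\varrho_0(2r)^2\vol B(o,2r))$ is $\lesssim r\cdot r=r^2$ using $\vol B(o,2r)\lesssim\int_0^{2r}A\lesssim e^{c r}$, so $\log\vol\lesssim r$; thus the quotient $h(r)\sim c/( \log r\cdot r)\cdot r = c/\log r$ which \emph{is} not in $L^1(\infty)$, and $h$ can be taken monotone. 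In (iii), $H(x)\ge c_0\varrho_0(r)/\log r$ gives $p(r)=nc_0/\log r$, so $\int_0^r\sqrt{p}\sim c r/\sqrt{\log r}$, $(\int_0^r\sqrt{p})^2\sim c r^2/\log r$, and $\exp(D r^2/\log r)$ beats $e^{(2\beta_1+\beta_2)r^2}$ for $D$ small; again use \eqref{volume-assumption} (or \eqref{volume-assumption2} as above). I expect the only delicate point — and the part worth stating carefully rather than ``routine'' — to be bookkeeping the free constant $D$ (respectively checking monotonicity of $h$) so that the chosen $p$ genuinely lands in the hypothesis of the proposition; everything else is substitution and l'Hôpital-type asymptotics. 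I would write the three cases in parallel, proving (i) in full and remarking that (ii) and (iii) follow mutatis mutandis with $p(r)=nc_0/(r\log r)$ and $p(r)=nc_0/\log r$ respectively.
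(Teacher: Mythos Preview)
Your overall strategy is exactly the paper's: take $v$ constant, set $q=nH$, pick $p(s)$ proportional to $(s^2\log s)^{-1}$, $(s\log s)^{-1}$, $(\log s)^{-1}$ in cases (i), (ii), (iii) respectively, and feed this into the proposition. The paper verifies \eqref{volume-assumption} in (i) and \eqref{volume-assumption2} in (ii) and (iii). Your proposal, however, contains two concrete errors in the verification step.

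\textbf{Case (i): the direction of $D$ is reversed.} You correctly compute that $\bigl(\int_0^r\sqrt{p}\bigr)^2\sim 4nc_0\log r$, so the numerator in \eqref{volume-assumption} behaves like $r^{4Dnc_0}$, while the denominator is at most $r^{2\beta_1+\beta_2}$. For the quotient to fail to lie in $L^1(+\infty)$ you need the exponent $4Dnc_0-(2\beta_1+\beta_2)\ge -1$, which requires $D$ \emph{large} (any $D\ge (2\beta_1+\beta_2-1)/(4nc_0)$ works). Your claim that ``for sufficiently small $D$ the quotient is not integrable'' is false: for small $D$ the integrand decays like $r^{-(2\beta_1+\beta_2)+o(1)}$, which \emph{is} integrable once $2\beta_1+\beta_2>1$. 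Since the proposition lets you choose $D$, simply take it large.

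\textbf{Case (iii): \eqref{volume-assumption} does not apply.} You assert that $\exp\bigl(Dr^2/\log r\bigr)$ beats $e^{(2\beta_1+\beta_2)r^2}$; it does not, for any $D$, since $r^2/\log r=o(r^2)$. The quotient in \eqref{volume-assumption} therefore decays like $\exp\bigl(-cr^2\bigr)$ and is integrable. You must use \eqref{volume-assumption2} here (your parenthetical hedge is in fact the only route). With $p(s)=nc_0/\log s$ one gets $\bigl(\int_r^{3r/2}\sqrt{p}\bigr)^2\sim c\,r^2/\log r$ and $r\log\bigl(\varrho_0(2r)^2\vol B(o,2r)\bigr)\lesssim r\cdot r^2=r^3$, so the quotient is bounded below by a constant times $(r\log r)^{-1}\notin L^1(+\infty)$, which is monotone. (The same arithmetic applies in (ii): your stated quotient $c/\log r$ should be $c/(r\log r)$, though both diverge upon integration, so the conclusion survives.)

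With these two fixes --- take $D$ large in (i), and use \eqref{volume-assumption2} rather than \eqref{volume-assumption} in (iii) --- your argument is complete and coincides with the paper's.
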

\begin{proof}
By choosing $p(s)=(s^2 \log s)^{-1}$ in (i), we see that \eqref{volume-assumption} holds, and therefore the claim follows. Similarly, choosing $p(s)=(s\log s)^{-1}$ in (ii) or $p(s)=(\log s)^{-1}$ in (iii), the condition \eqref{volume-assumption2} holds and the claim follows.
\end{proof}

\end{document}